\newcommand{\Z}{\mathbb{Z}}
\newcommand{\R}{\mathbb{R}}
\newcommand{\loc}{_{\textnormal{loc}}}
    \newcommand{\Jcu}{\widehat J^{cu}}
    \newcommand{\Jsc}{\widehat J^{sc}}
    \newcommand{\Jrsc}{\widehat J^{rsc}}
    \newcommand{\Es}{E^s}
    \newcommand{\Ec}{E^c}
    \newcommand{\Eu}{E^u}
    \newcommand{\ti}{\times}
    \newcommand{\Hcal}{\mathcal{H}}
    \newcommand{\subof}{\subset}
    \newcommand{\sans}{\setminus}
    \newcommand{\invn}{^{-n}}
    \newcommand{\bbR}{\mathbb{R}}
    \newcommand{\bbZ}{\mathbb{Z}}
    \newcommand{\Wu}{\mathcal{W}^{u}}
    \newcommand{\Ws}{\mathcal{W}^{s}}
    \newcommand{\Wr}{\mathcal{W}^{r}}
    \newcommand{\Wrloc}{\mathcal{W}^{r}_{\text{loc}}}
    \newcommand{\Wscu}{\mathcal{W}^{scu}}
    \newcommand{\al}{\alpha}
    \newcommand{\sig}{\sigma}
    \newcommand{\lam}{\lambda}
    \newcommand{\inv}{^{-1}}
    \newcommand{\bt}{\beta}
    \newcommand{\rcov}{r_{\text{cov}}}
    \newcommand{\dcap}{d_{\text{cap}}}
    \newcommand{\dist}{\operatorname{dist}}
    \newcommand{\del}{\partial}
    \newcommand{\qandq}{\quad \text{and} \quad}
\newtheorem{theo}{Theorem}[section]
\newtheorem{lema}[theo]{Lemma}
\newtheorem{lemma}[theo]{Lemma}
\newtheorem{cor}[theo]{Corollary}
\newtheorem{prop}[theo]{Proposition}
\theoremstyle{remark}
\newtheorem*{notation} {\textbf{Notation}}
\title{Ergodicity of Partially Hyperbolic Endomorphisms}
\thanks{This research is partly supported by the Australian Research Council and by an Australian Mathematical Society Lift-Off Fellowship}
\author{Andy Hammerlindl\and Audrey Tyler}
\begin{document}
\begin{abstract}
We prove that for volume preserving, partially hyperbolic, center bunched endomorphisms with constant Jacobian, essential accessibility
implies ergodicity.
\end{abstract}

\maketitle

\section{Introduction}

The history of the study of smooth ergodic theory goes
back at least to the work of E. Hopf in the 1930s,
who developed techniques to show that the geodesic flow of a negatively
curved surface is ergodic \cite{EHopf39}.
His argument, now called the Hopf argument,
has been generalised considerably since then
and applies in some form to many families of volume-preserving dynamical
systems.

Anosov proved in the 1960s that what are now called Anosov
dynamical systems have absolutely continuous stable and unstable folations.
This allowed him to prove that volume-preserving Anosov systems are ergodic \cite{Anosov97}.
Brin and Pesin developed the notion of a partially hyperbolic system
and could establish in certain cases that these systems are ergodic \cite{BP74}.
Grayson, Pugh, and Shub generalised these techniques,
proving that there are $C^2$-open families of
volume-preserving partially hyperbolic diffeomorphisms which are ergodic \cite{GraysonPughShub1994}. 
This further led Pugh and Shub to develop the Pugh-Shub conjectures
which, roughly speaking, state that most partially hyperbolic diffeomorphisms
have a geometric property called accessibility and that
any accessible volume preserving system is ergodic \cite{PughShub96}.

The Pugh-Shub conjectures as stated in their full generality
remain as open questions, but they have been established in many cases
under additional assumptions.
In particular, \cite{RHRHU08}
proved the conjectures under the assumption that the
center is one-dimensional, and Burns and Wilkinson proved that
accessibility together with a property called center bunching
imply that a volume-preserving partially hyperbolic diffeomorphism is ergodic \cite{BW10}.

The advances described above all concern invertible dynamical systems,
either flows or diffeomorphisms. In this paper, we consider the ergodic
properties of non-invertible partially hyperbolic systems.
Mañe, Pugh \cite{Mane}, and Przytycki \cite{przytycki}
developed the notion of an Anosov endomorphism, a self map $f\colon M\to M$
with well-defined stable and unstable directions
$\Eu$ and $\Es$ along each orbit.
They analysed the topological properties of these systems,
such as structural stability.
Partial hyperbolicity has also been generalised, leading to the notion
of a partial hyperbolic endomorphism.
Much of the study of partial hyperbolic endomorphism
has focused on the two-dimensional case
where each orbit has a center direction $\Ec$ and
an expanding unstable direction $\Eu$, each one-dimensional.
Topological properties, such as a classification up to leaf conjugacy
have been proved \cite{Layne-Andy}. 
Ergodic properties in dimension two have also been proved, but using
techniques very different to the Hopf argument.
We do not attempt to give a comprehensive list of references here,
but two relevant results are \cite{Tsujii} and \cite{LDS_Limit_theorems}.

In this paper, we state and prove what we view as the most natural
reformulation of the ergodicity result of Burns and Wilkinson 
in the setting of partially hyperbolic endomorphisms.
In particular, we do not place any assumptions on the dimensions of
the stable, center, or unstable directions.

We first state our main theorem and then later in this introduction
explain how the properties listed as hypotheses in the theorem
are defined in the non-invertible setting.

\begin{theo}[Main]
\label{main}
Let $f$ be a $C^2$, volume-preserving, partially hyperbolic, center bunched endomorphism with constant Jacobian. 
If $f$ is essentially accessible, then $f$ is ergodic.
\end{theo}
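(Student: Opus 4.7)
The plan is to adapt the Hopf--Burns--Wilkinson scheme \cite{BW10} to the non-invertible setting by passing to the natural extension. Because the unstable bundle of an endomorphism is only defined along orbits, I would work on the inverse limit $\hat M$, whose points $\hat x=(\ldots,x_{-2},x_{-1},x_0)$ encode a complete prehistory, together with the shift $\hat f$. On $\hat M$ the strong stable, center and strong unstable bundles are all globally defined, and they integrate to foliations whose leaves embed smoothly into $M$ via the projection $\pi\colon\hat M\to M$. The constant-Jacobian hypothesis is what is needed to lift $\mathrm{Leb}$ to a unique $\hat f$-invariant probability measure $\hat\mu$ on $\hat M$ with $\pi_*\hat\mu=\mathrm{Leb}$, so that ergodicity of $(f,\mathrm{Leb})$ is equivalent to ergodicity of $(\hat f,\hat\mu)$; essential accessibility on $M$ likewise transfers to an $su$-accessibility relation on $\hat M$ using the prehistory-dependent unstable plaques.

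With this setup, fix $\phi\in C^0(M)$ and set $\varphi=\phi\circ\pi$. Birkhoff's theorem produces forward and backward averages $\varphi^+$ and $\varphi^-$ that coincide $\hat\mu$-a.e., with $\varphi^+$ constant on stable leaves and $\varphi^-$ constant on unstable leaves in $\hat M$. Ergodicity reduces to showing that, for a dense family of $\phi$, this common value is $\hat\mu$-essentially constant on $\hat M$. The strategy is to connect typical density points of the full-measure set $B=\{\varphi^+=\varphi^-\}$ by $su$-paths and transport the a.e.\ value of $\varphi^\pm$ along them; this in turn reduces to proving that stable and unstable holonomies send density points of $B$ to density points of $B$.

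The main obstacle is the central technical step of Burns--Wilkinson: the holonomy-invariance of density. It rests on constructing nested $cu$- and $cs$-juliennes whose eccentricities are tuned to the center-bunching rates, together with a distortion estimate showing that the stable and unstable holonomies map such juliennes to comparable ones. Exporting these estimates to the natural extension is the delicate part, since $\hat M$ is not a manifold. The key observation is that for fixed $\hat x$ the local strong unstable and center-unstable plaques through $\hat x$ embed diffeomorphically into $M$ via $\pi$, so the entire julienne construction and distortion estimate can be carried out on $M$ along these plaques, exactly as in \cite{BW10}. The conditional measures transverse to these plaques are uniform by the constant-Jacobian hypothesis, which is precisely what is needed to pull the holonomy-invariance of density through the disintegration of $\hat\mu$ over $\mathrm{Leb}$. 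Combined with essential accessibility, this forces $\varphi^+$ to be essentially constant on $\hat M$, giving ergodicity of $\hat f$ and therefore of $f$.
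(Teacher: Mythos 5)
Your high-level setup matches the paper closely: pass to the natural extension $M_f$, lift $\mu$ to a $\sigma$-invariant measure $\nu$ with $\pi_*\nu=\mu$, run the Hopf argument with forward/backward Birkhoff averages, and reduce ergodicity to showing that density points of the level set are transported by stable and unstable holonomies. The paper does exactly this. But your proposal has a genuine gap at the single hardest step, which is unstable saturation.

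You claim that since each local $scu$-plaque through $\hat x$ embeds diffeomorphically into $M$ via $\pi$, the Burns--Wilkinson julienne construction and distortion estimates can be run ``exactly as in [BW10]'' inside that plaque, with the constant-Jacobian hypothesis giving uniform transverse conditionals and letting you push the holonomy-invariance of density through the disintegration. This is where the argument breaks. The Burns--Wilkinson argument inside a single sheet requires the set in question to be essentially $u$-saturated \emph{within that sheet}. One knows that $A^{rs}$ and its $u$-saturate $A^u$ agree $\nu$-a.e.\ on $M_f$, and Fubini then gives agreement $\nu^{scu}$-a.e.\ on $\nu^r$-almost every sheet. But an unstable segment from $x$ to $x'$ lives in \emph{one particular} $scu$-sheet, and that sheet can be a ``bad'' one on which $A^{rs}\cap\mathcal{L}$ and $A^u\cap\mathcal{L}$ differ on a set of positive $\nu^{scu}$-measure. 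You cannot choose a good sheet here, because the sheet is dictated by the unstable segment you are trying to follow. (For the stable direction this obstruction vanishes, since stable leaves project to an honest foliation of $M$ and every copy of $U$ over the fiber contains the same stable segment, so you may choose a good copy — this asymmetry is exactly why stable and unstable saturation must be handled differently.)

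The missing idea in your proposal is the paper's $\Jrsc$-julienne: a julienne that in addition to the fake $sc$-directions also extends a tuned distance into the fiber or ``$r$'' direction. Such a julienne sweeps through uncountably many copies of $U$, almost all of which are good in the Fubini sense, and this is what allows one to replace the conditional $\nu^{scu}$-density inside a possibly-bad sheet by a $\widehat\nu^{rsc}$-density that behaves well. Getting the internesting estimate under unstable holonomy for these thicker objects requires a new ingredient absent from [BW10], namely that the $scu$-holonomy is an isometry in the $r$-metric (Lemma~\ref{lemma:fibermetricinvariance}); your sketch does not provide an estimate that controls the $r$-extent of the holonomy image. Without the $\Jrsc$ construction and its internesting/regularity properties, the chain of equivalences between Lebesgue density of $A_0$ in $M$ and julienne density of $A^u$ in $M_f$ does not close, and the Hopf argument stalls at unstable saturation.
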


The Pugh–Shub conjectures, originally stated for diffeomorphisms, can also be formulated for endomorphisms.
This theorem addresses, in the non-invertible setting, the part of the Pugh–Shub conjectures that states that accessibility implies ergodicity.
We also believe that most partially hyperbolic endomorphisms are accessible.

A significant prior result on
endomorphism conjectures was established by He in 2017 \cite{Baolin};
who proved that most partially hyperbolic, volume-preserving endomorphisms are accessible
under the assumption that the center direction is one-dimensional.
Together with the results of He, our work proves the conjectures in the context of noninvertible systems, assuming that the center is one-dimensional and the endomorphism has a constant Jacobian.

As a specific example, He established stable accessibility for the system $g\colon \mathbb{T}^2\to \mathbb{T}^2$,
\[
g(x,y)=(2x,y+\lambda \sin (2\pi x)),\quad \lambda\neq 0.
\]
Note $g$ has a constant Jacobian of 2. Given that every partially hyperbolic endomorphism with a one-dimensional center is center bunched, $g$ is thus center bunched. Our main result shows that $g$ is ergodic and stably ergodic within the space of partially hyperbolic endomorphisms with a one-dimensional center and constant Jacobian.

\bigskip

In the result of Burns and Wilkinson, they assume that the partially hyperbolic diffeomorphism preserves a measure equivalent to the Lebesgue measure in the manifold $M$.
If this measure is given by a smooth volume form, 
then the measure-preserving assumption implies that the Jacobian of $f$
is constant and equal to one.
In our setting, we assume that the Jacobian is constant and equal to
the degree of $f$ as a covering map
from $M$ to itself.

In \cite{BW10}, Burns and Wilkinson prove the Pugh-Shub conjecture that states that accessibility implies ergodicity for $C^2$, partially hyperbolic, volume-preserving diffeomorphisms under the additional assumption that the diffeomorphism is center bunched.
Similarly,
since we are adapting their work,
it is natural to add the hypothesis of center bunching to our setting.

\bigskip

Let $M$ be a compact Riemannian manifold. An \textit{endomorphism} is a self-covering map $f\colon M\to M$ that is a local diffeomorphism. 
We define the \textit{space of orbits}\index{space of orbits} of the system $(M,f)$ in the following way $$M_f\coloneqq\{\{x_n\}_{n\in \mathbb{Z}}\in M^\mathbb{Z}: f(x_n)=x_{n+1}\}.$$ In other terms, a sequence is an element of $M_f$ if it is a full orbit of $f$.
In this space, we define \textit{the natural projection} $\pi:M_f\to M, \{x_n\}_{n\in \mathbb{Z}} \to x_0$ and the \textit{left shift} $\sigma:M_f\to M_f$. By definition $f(x_n)=x_{n+1}$ for all $\{x_n\}_{n\in \mathbb{Z}}\in M_f$, then $\pi\circ\sigma=f\circ\pi$.

Define $TM_f$, the tangent bundle over $M_f$, as $$TM_f\coloneqq \bigcup_{x\in M_f}\, \bigcup_{v\in T_{\pi(x)}M}\, (x,v).$$
In other terms, an element of $TM_f$ is a pair $(x,v)$ where $x\in M_f$ and $v\in T_{\pi(x)}M$.
The derivative of $\sigma$ is defined as $D\sigma\colon TM_f \to TM_f$, such that $D\sigma (x,v)=(\sigma(x),D_{\pi(x)}f(v))$. Since $\sigma$ is an invertible function and $D_{\pi(x)}f$ is an invertible function for any $x\in M_f$, it follows that $D\sigma$ is an invertible function. \par 
An endomorphism $f$ is \textit{partially hyperbolic} if the following conditions hold. 
There is a non-trivial splitting of the tangent bundle over $M_f$ into three subbundles, 
$$TM_f=E^s\oplus E^c\oplus E^u,$$
that is invariant under the derivative map $D\sigma$. 
Further, there is a Riemannian metric on $TM$ such that when lifted to $TM_f$ we can choose continuous positive functions 
${\lambda^{s},\lambda^{c},\hat\lambda^{c},\lambda^{u}\colon M_f\to\R}$ with $\lambda^s,\lambda^u<1$ and $$\lambda^s<\lambda^c<{(\hat\lambda^c)}^{-1}<{(\lambda^u)}^{-1} $$ such that, for any unit vector $v\in T_pM_f$,
\begin{alignat*}{4}
&\|D_p\sigma v\|&& <{\lambda^s(p)} &\qquad \hbox{if }v\in E^s(p),\\
{\lambda^c(p)}<&\|D_p\sigma v\|&& <{\hat\lambda^c(p)}^{-1} &\qquad \hbox{if }v\in E^c(p),\\
{\lambda^u(p)}^{-1} <&\|D_p\sigma v\|&& &\qquad \hbox{if }v\in E^u(p).
\end{alignat*}

A partially hyperbolic endomorphism $f$ is \textit{center bunched} if the functions $\lambda^{s}$, $\lambda^{c}$, $\hat\lambda^{c}$, and $\lambda^{u}$ can be chosen so that 
\begin{equation*}
\label{eq: center bunched}
\lambda^s<\lambda^c\cdot\hat\lambda^c\quad\text{and}\quad\lambda^u<\lambda^c\cdot\hat\lambda^c.
\end{equation*}

By definition, the subbundles $E^s$ and $E^{cs}$ at $x_0$ are only determined by the derivative $Df$ at $x_n=f^n(x_0)$ for $n\geq 0$.
Since these subbundles depend only on the forward orbit of $x_0 \in M$ and not the full orbit of $\{x_n\}_{n\in \mathbb{Z}}$, we have the property that $E^s(y),E^{cs}(y)\subset T_{x_0}M$ is constant for all $y$ such that $y_0=x_0$.

For a general partially hyperbolic endomorphism,
stable manifolds and unstable manifolds are fundamentally different. 
As in the Anosov case \cite{BP74}, the stable bundle $E^s\subset TM$ of a partially hyperbolic endomorphism is tangent to a foliation $\mathcal{W}^s$ on $M$.
This is illustrated in Figure \ref{fig:StableProjection}.
In general, unstable manifolds do not form a foliation on $M$.
For partially hyperbolic endomorphisms, we have to study each backward orbit of a given point.
Similarly to the work done on Anosov endomorphisms \cite{przytycki} to construct the invariant manifolds, we adapt these arguments to the partially hyperbolic case, and we obtain the unstable foliation on the space of orbits. 
Thus, the unstable bundle \( E^u \) of a partially hyperbolic endomorphism is tangent to a foliation on the space of orbits which we denote by \( \mathcal{W}^u \).
\begin{figure}[ht]
    \centering
\begin{tikzpicture}
\draw[fill=black!20, opacity=.5, draw=none] (-1,-1) -- (3,-1) -- (3,3) -- (-1,3) -- (-1,-1);
\draw[fill=black!20, draw=none] (0,0) -- (2,0) -- (2,2) -- (0,2) -- (0,0);
\node[above] at (1,3) {Neighbourhood of $x_0$ in $M$};
\node[above] at (10,3) {Neighbourhood of $x$ in $M_f$};
\draw[color=blue, thick] (1,0) .. controls +(-0.6,0) and  +(0.5,0) .. node[pos=0, below] {$\mathcal{W}^s(x_0)$} node[midway] (x) {} node[pos=.8] (y) {} (1,2) ;
\node[circle,inner sep=1.5pt,fill=black] at (x) {}; 
\node[circle,inner sep=1.5pt,fill=black] at (y) {}; 
\node[left] at (x) {$x_0$};
\node[left] at (y) {$y_0$};
\draw[thick] (0,0) -- (2,0) -- (2,2) -- (0,2) -- (0,0);
\begin{scope}[shift={(9,1)}]
\foreach \x in {
1,
3,
6,
8,
17,
19,
22,
24}
{
\draw[fill=black!20, opacity=.5, draw=none] (0,0,.3*\x) -- (2,0,.3*\x) -- (2,2,.3*\x) -- (0,2,.3*\x) -- (0,0,.3*\x);
\draw[thick] (0,0,.3*\x) -- (2,0,.3*\x) -- (2,2,.3*\x) -- (0,2,.3*\x) -- (0,0,.3*\x);
\draw[color=blue, thick] (1,0,.3*\x) .. controls +(-0.6,0,0) and  +(0.5,0,0) .. node[midway] (x) {} node[pos=.8] (y) {} (1,2,.3*\x) ;
\node[circle,inner sep=1.5pt,fill=black] at (x) {}; 
\node[circle,inner sep=1.5pt,fill=black] at (y) {}; 
}
\draw[color=blue, thick] (1,0,.3*24) .. controls +(-0.6,0,0) and  +(0.5,0,0) .. node[midway] (x1) {} node[pos=.8] (y1) {} (1,2,.3*24) ;
\node[circle,inner sep=1.5pt,fill=black] at (x1) {}; 
\node[circle,inner sep=1.5pt,fill=black] at (y1) {}; 
\node[left] at (x1) {$x$};
\node[left] at (y1) {$y$};
\end{scope}
\draw[thick,->] (5.5,1) -- node[above,midway] {$\pi$} (3.5,1);

\end{tikzpicture}

    \caption{Unlike the unstable foliation in $M_f$, the stable foliation projects down by $\pi$ to the unique unstable foliation on $M$.}
    \label{fig:StableProjection}
\end{figure}
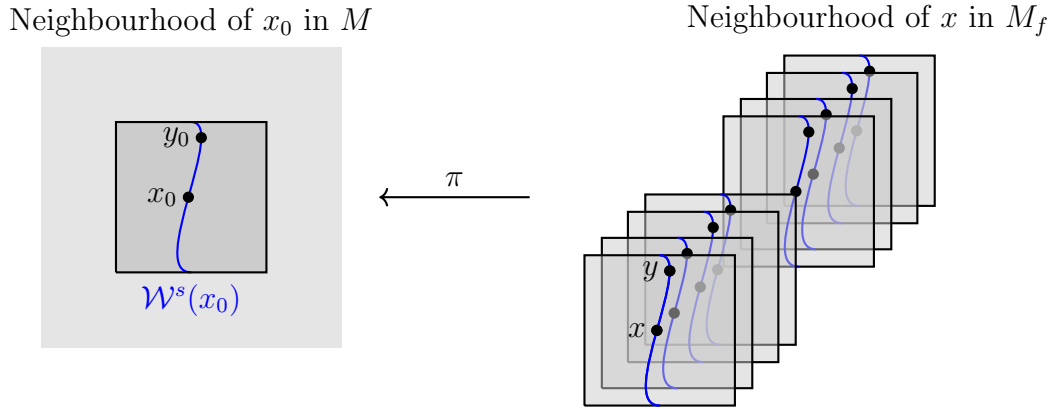

The leaves of the unstable foliation can be projected down to obtain unstable manifolds in the base manifold $M$.
A consequence of not having unique backward orbits is that there can be uncountably many unstable manifolds going through a point, each associated to a particular backward orbit of that point. 
This is illustrated in Figure \ref{fig:multipleunstables}.
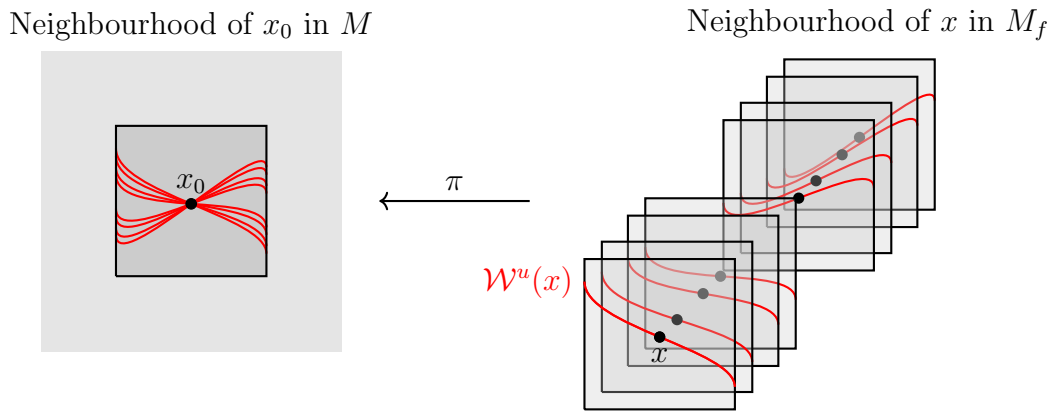
\begin{figure}[ht]
    \centering

\begin{tikzpicture}
\draw[fill=black!20, opacity=.5, draw=none] (-1,-1) -- (3,-1) -- (3,3) -- (-1,3) -- (-1,-1);
\draw[fill=black!20, draw=none] (0,0) -- (2,0) -- (2,2) -- (0,2) -- (0,0);
\node[above] at (1,3) {Neighbourhood of $x_0$ in $M$};
\node[above] at (10,3) {Neighbourhood of $x$ in $M_f$};

\foreach \x in {
1,
3,
6,
8,
17,
19,
22,
24}
{ 
\draw[color=red, thick] (0,{.5+0.05*\x}) .. controls +(0,-0.6,0) and  +(0,0.5,0) .. node[midway] (x) {} (2,1.5-.05*\x) ;
}
\node[above] at (x) {$x_0$};
\filldraw [black] (x) circle (2pt);
\draw[thick] (0,0) -- (2,0) -- (2,2) -- (0,2) -- (0,0);
\begin{scope}[shift={(9,1)}]
\foreach \x in {
1,
3,
6,
8,
17,
19,
22,
24}
{
\draw[fill=black!20, opacity=.3, draw=none] (0,0,.3*\x) -- (2,0,.3*\x) -- (2,2,.3*\x) -- (0,2,.3*\x) -- (0,0,.3*\x);
\draw[color=red, thick] (0,{.5+0.05*\x},.3*\x) .. controls +(0,-0.6,0) and  +(0,0.5,0) .. node[midway] (x) {} (2,1.5-.05*\x,.3*\x) ;
\node[circle,inner sep=1.5pt,fill=black] at (x) {}; 
\draw[thick] (0,0,.3*\x) -- (2,0,.3*\x) -- (2,2,.3*\x) -- (0,2,.3*\x) -- (0,0,.3*\x);
}
\draw[color=red, thick] (0,{.5+0.05*24},.3*24) .. controls +(0,-0.6,0) and  +(0,0.5,0) .. node[midway] (z) {}  node[pos=0, left] {$\mathcal{W}^u(x)$} (2,1.5-.05*24,.3*24);
\node[below] at (z) {$x$} ;
\node[circle,inner sep=1.5pt,fill=black] at (z) {}; 
\end{scope}
\draw[thick,->] (5.5,1) -- node[above,midway] {$\pi$} (3.5,1);
\end{tikzpicture}

    \caption{Multiple unstable manifolds passing through a point \( x_0 \in M \), each corresponding to a different past orbit of \( x_0 \) in the space of orbits.}
    \label{fig:multipleunstables}
\end{figure}

To prove Theorem \ref{main},
we now need to consider four different ``directions''.
There are the $s,$ $c,$ and $u$ directions coming from the splitting
$\Es \oplus \Ec \oplus \Eu$ along each orbit as well as the fiber direction.
We associate the fiber direction with the letter ``$r$'' in order to use it
in subscripts and superscripts.
In particular, for two orbits
$x = \{ x_n \}_{n \in \bbZ}$
and
$y = \{ y_n \}_{n \in \bbZ}$ in $M_f,$
we define sets $\mathcal{W}^r\loc(x)$ and $\mathcal{W}^r(x)$ by
\begin{itemize}
    \item $y \in \mathcal{W}^r\loc(x)$ if and only if $x_0 = y_0$ and
    \item $y \in \mathcal{W}^r(x)$ if and only if $x_n = y_n$ for some $n \in \bbZ.$
\end{itemize}
The set $\mathcal{W}^r(x)$ can be thought of as a strong stable set where the contraction
along the $r$ direction is much stronger than the contraction along the
$s$ direction. We chose the letter ``$r$'' both because it is the last letter
in the American spelling of ``fiber'' and because $r$ comes before $s$
alphabetically and the two directions can both be considered as stable
directions.

\subsection{Accessibility}
In 2000, as noted above, Pugh and Shub \cite{PS2000} proposed the concept of accessibility to study the original Pugh-Shub conjecture in the diffeomorphism case by dividing the conjecture into two subconjectures.
A partially hyperbolic diffeomorphism $f$ is \textit{accessible} if for any two points $x,y\in M$, there is a finite set of points $x=x_0,x_1,\dots,x_n=y$, such that the pair $x_i$, $x_{i+1}$ both lie in either a common stable or unstable manifold. 

In the partially hyperbolic diffeomorphism case, accessibility comes from following paths along two quasi-transverse foliations, the stable and unstable foliations.
Two distinct leaves of the same kind, stable or unstable, do not intersect each other.
This is visualised in Figure \ref{fig:acccomp}.

A partially hyperbolic endomorphism $f$ is \textit{accessible} if, for any two points $x,y\in M$, there is a finite set of points $x=x_0,x_1,\dots,x_n=y$, such that the pair $x_i$, $x_{i+1}$ both lie in either a common stable or unstable manifold. This sequence $x_0,x_1,\dots,x_n$ is called an \textit{$su$-path}. The \textit{accessibility class of }$p\in M$ is the set of all $q\in M$ that can be reached from $p$ along an $su$-path. This is an equivalence relation. It follows that $f$ is accessible if and only if there is a single accessibility class. A partially hyperbolic endomorphism $f$ is \textit{essentially accessible} if every Lebesgue measurable set that is a union of entire accessibility classes has either full or zero measure.

In the endomorphism case, we can change from one unstable leaf to another at a single point.

\subsection{Weak partial hyperbolicity case}
Weakly partially hyperbolic endomorphisms differ mainly from those studied in the rest of this paper in that they do not have a stable direction,
that is,
$\dim(E^s)=0$. In this case, the tangent bundle of the space of orbits has a nontrivial invariant splitting into center and unstable subspaces.
\[
TM_f= E^c\oplus E^u.
\]

To observe that $\dim(E^u)>0$ is needed, we must recall that if we had $\dim(E^u)=0$ and $\dim(E^s)>0$, then we would have a stable foliation on the base manifold $M$. As shown in Figure \ref{fig:StableProjection}, we would have a unique stable leaf through each point. By the definition of foliation, two points are either in the same stable leaf or in disjoint stable leaves.
Hence, two points on different stable leaves would never be connected by stable leaves. So accessibility is not possible in this setting.


In Figure \ref{fig:multipleunstables}, a partially hyperbolic endomorphism is illustrated, where projecting unstable curves from $M_f$ to $M$, each belonging to different orbits, results in an intersection. It is the intersecting unstable manifolds passing through each point that may allow us to recover accessibility. 
Figure \ref{fig:acccomp} shows the different behaviours between accessibility for strong partially hyperbolic diffeomorphisms and weak partially hyperbolic endomorphisms.
While in the diffeomorphism case, accessibility may rise from a sequence of stable and unstable leaves intersecting.
In the weakly partially hyperbolic endomorphism case, accessibility depends only on unstable manifolds defined by different past orbits.

\begin{figure}[h]
    \centering
\begin{tikzpicture}
\begin{scope}[shift={(0,4)}]
    \node[draw,align=left] at (4,1) {Accessibility for \textbf{strong} partially hyperbolic \textbf{diffeomorphisms}};
    \draw[blue, thick] (-.5,-1) .. controls (0,-.5) and (2,-1) .. node[midway,above] {\textit{s}} (2.5,0);
    \draw[red, thick] (0,0) .. controls (.2,-1) and (-.2,-1) .. node[pos=0,above] {\textit{u}} node[pos=.9] (x) {} (0,-2);
    \draw[red, thick] (4,.5) .. controls +(0,-1) and +(0,1) .. node[pos=1,below] {\textit{u}} (8.5,-2);
    \draw[blue, thick] (8.5,0) .. controls +(-1,-1) and +(1,1) .. node[pos=1,below left] {\textit{s}} node[pos=.1] (y) {} (7,-2);

    \draw[red, thick] (2,0.1) .. controls (2.2,-1) and (1.6,-1) .. node[pos=1,below] {\textit{u}} (2,-2);
    \draw[blue, thick] (1.5,-1.5) .. controls +(1,1) and +(-1,-1) .. node[midway,above left] {\textit{s}}  (7,0);
    \fill (x) circle [radius=2pt];
    \node [left] at (x) {$x$};
    \fill (y) circle [radius=2pt];
    \node [left] at (y) {$y$};
\end{scope}

\node[draw,align=left] at (4,1) {Accessibility for \textbf{weak} partially hyperbolic \textbf{endomorphisms}};
\draw[red, thick] (-.5,-1) .. controls (0,-.5) and (4,-1) .. node[near end, below left] {\textit{u}} (5,0);
\draw[red, thick] (0,-1.5) .. controls (0,-.5) and (2,-1) .. node[pos=.1] (x0) {} node[pos=1,above] {\textit{u}} (2,0);
\draw[red, thick] (4,.5) .. controls +(0,-1) and +(0,1) .. node[midway,above] {\textit{u}} (8.5,-2);
\draw[red, thick] (8.5,0) .. controls +(-1,-1) and +(1,1) ..  node[pos=1,below left] {\textit{u}} node[pos=.1] (y0) {} (7,-2);

\fill (x0) circle [radius=2pt];
\node[left] at (x0) {$x_0$};

\fill (y0) circle [radius=2pt];
\node[left] at (y0) {$y_0$};

\begin{scope}[shift={(0,-4)}]
    \node[draw,align=left] at (4,1) {Accessibility for \textbf{strong} partially hyperbolic \textbf{endomorphisms}};
\end{scope}
\begin{scope}[shift={(0,-5)}]
\draw[blue,thick] (-.5,-1) .. controls +(0.5,.5) and +(-.2,-.5) .. node[pos=.2] (x) {} node[midway, left] {\textit{s}} (1.5,1);
\draw[blue,thick] (6.5,-1) .. controls +(0.3,.3) and +(-.1,-.25) .. node[pos=.8] (y) {} node[midway, right] {\textit{s}} (8.5,1);
\draw[red,thick] (1,1) .. controls +(0,-.2) and +(0.1,0.1) .. node[near end, above] {\textit{u}} (5,0);
\draw[red,thick] (5,-.75) .. controls +(-0.5,0.5) and +(.5,-.5) .. node[pos=0, below] {\textit{u}} (2.5,.75);
\draw[red,thick] (2,0) .. controls +(1,0) and +(0,0.2) .. node[near start, below] {\textit{u}} (7,-.75);

\fill (x) circle [radius=2pt];
\fill (y) circle [radius=2pt];
\node[left] at (y) {$w_0$};
\node[below] at (x) {$t_0$};
\end{scope}
\end{tikzpicture}

    \caption{In the strong partially hyperbolic diffeomorphism case, an \textit{su}-path connects $x$ to $y$ by alternating stable and unstable manifolds. In the weak partially hyperbolic endomorphism case, an \textit{u}-path connects $x_0$ to $y_0$, inside the base manifold $M$, using only unstable curves.
    In the strong partially hyperbolic endomorphism case, an \textit{su}-path connects $t_0$ to $w_0$, inside the base manifold $M$.}
    \label{fig:acccomp}
\end{figure}
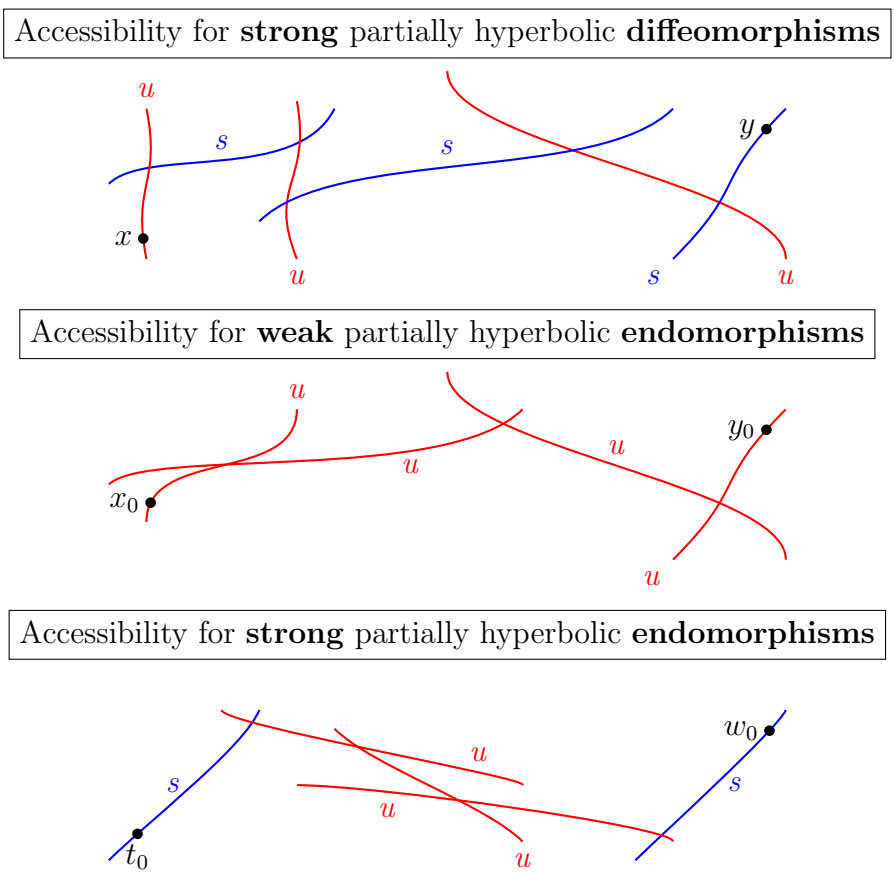

For weakly partially hyperbolic endomorphisms,
accessibility relies only on unstable manifolds. 

Before moving on to our more general case, we want to take a last look at accessibility for weak partially hyperbolic endomorphisms. Figure \ref{figaccmf} shows that while on the base manifold a \textit{u}-path might appear merely as the concatenation of unstable curves contained in intersecting unstable manifolds, the space of orbits reveals what is underlying. 
On the base manifold at an intersection, we change to another unstable curve.
On the space of orbits, since the unstable manifolds are now an unstable foliation, we must use the \textit{r}-direction to move between them. At each intersection, we are changing the past orbit of the point and then moving along the unstable curve inside the unstable leaf through this new orbit.

We could define accessibility in the space of orbits in terms of \textit{u}-paths and the \textit{r}-direction. This definition is equivalent to the one above in this section for weakly partially hyperbolic endomorphisms. To extend the definition to the strong partially hyperbolic endomorphism, one would only need to include \textit{su}-paths and the \textit{r}-direction, a definition which again is equivalent to the main definition of accessibility used in this paper.

\begin{figure}[ht]
    \centering
\begin{tikzpicture}
\draw[name path=ux,white] (-.5,-1) .. controls (0,-.5) and (4,-1) .. (5,0);
\draw[name path=sx,red, thick] (0,-1.5) .. controls (0,-.5) and (2,-1) .. node[pos=.1] (x0) {} node[pos=0,below] {\textit{u}} (2,0);
\draw[name path=rojo,red, thick] (4,.5) .. controls +(0,-1) and +(0,1) .. node[midway,above] {\textit{u}} (8.5,-2);
\draw[name path=cxprime,color=white] (8.5,0) .. controls +(-1,-1) and +(1,1) ..  (7,-2);

\fill [name intersections={of=ux and sx, by=x}] (x) circle [radius=2pt];
\fill [name intersections={of=rojo and cxprime, by=xprime}] (xprime) circle [radius=2pt];
\node [anchor=north] at (x) {$x^2$};
\node [anchor=north] at (xprime) {$x^6$};
\draw[name path=s3,densely dashed, color=blue!30!black,thick] (x) -- node[midway,right] {\textit{r}} +(.5,1.26);    
\draw[->,thick] (4,-1.5) to node[anchor=west] {$\pi$} (4,-3);

\fill (x0) circle [radius=2pt];
\node[left] at (x0) {$x^1$};

\begin{scope}[shift={(.5,1.25)}]
\draw[name path=u2,red, thick] (-.5,-1) .. controls (0,-.5) and (4,-1) .. node[midway,above] {\textit{u}} (5,0);    
\fill [name intersections={of=u2 and s3, by=x2}] (x2) circle [radius=2pt];
\node [anchor=south] at (x2) {$x^3$};
\draw[name path=n,white] (4,.5) .. controls +(0,-1) and +(0,1) .. (8.5,-2);
\fill [name intersections={of=u2 and n, by=x3}] (x3) circle [radius=2pt];
\draw[name path=u4,color=red, thick] (8.5,0) .. controls +(-1,-1) and +(1,1) .. node[pos=0,above right] {\textit{u}} node[pos=.1] (x8) {} (7,-2);
\fill [name intersections={of=u4 and n, by=m}] (m) circle [radius=2pt];

\fill (x8) circle [radius=2pt];
\node[right] at (x8) {$x^8$};

\end{scope}
\fill [name intersections={of=ux and rojo, by=x4}] (x4) circle [radius=2pt];
\draw[name path=s1,densely dashed, color=blue!30!black,thick] (x3) -- node[midway,right] {\textit{r}} (x4);
\draw[name path=s1,densely dashed, color=blue!30!black,thick] (xprime) -- node[midway,right] {\textit{r}} (m);

\begin{scope}[shift={(0,-4)}]
\draw[name path=ux,red, thick] (-.5,-1) .. controls (0,-.5) and (4,-1) .. node[near end, above left] {\textit{u}} (5,0);
\draw[name path=sx,red, thick] (0,-1.5) .. controls (0,-.5) and (2,-1) .. node[pos=.1] (x0) {} node[pos=0,below] {\textit{u}} (2,0);
\draw[name path=rojo,red, thick] (4,.5) .. controls +(0,-1) and +(0,1) .. node[midway,above] {\textit{u}} (8.5,-2);
\draw[name path=cxprime,color=red, thick] (8.5,0) .. controls +(-1,-1) and +(1,1) .. node[pos=0,above right] {\textit{u}} node[pos=.1] (y0) {} (7,-2);

\fill [name intersections={of=ux and sx, by=x}] (x) circle [radius=2pt];
\fill [name intersections={of=ux and rojo, by=z}] (z) circle [radius=2pt];
\fill [name intersections={of=rojo and cxprime, by=xprime}] (xprime) circle [radius=2pt];
\node [above left] at (x) {$\pi(x^2)=\pi(x^3)$};
\node [right] at (xprime) {$\pi(x^6)=\pi(x^7)$};
\node [anchor=west] at (x3) {$x^4$};
\node [anchor=north] at (x4) {$x^5$};
\node [anchor=west] at (m) {$x^7$};
\node [anchor=south west] at (z) {$\pi(x^4)=\pi(x^5)$};
\fill (x0) circle [radius=2pt];
\node[left] at (x0) {$\pi(x^1)=x_0$};

\fill (y0) circle [radius=2pt];
\node[right] at (y0) {$\pi(x^8)=y_0$};
\end{scope}
\end{tikzpicture}
\caption{The projection $\pi$ relates the notions of accessibility on the space of orbits $M_f$ and on the manifold $M$. Note that this is the weakly partially hyperbolic endomorphism case.}
\label{figaccmf}
\end{figure}
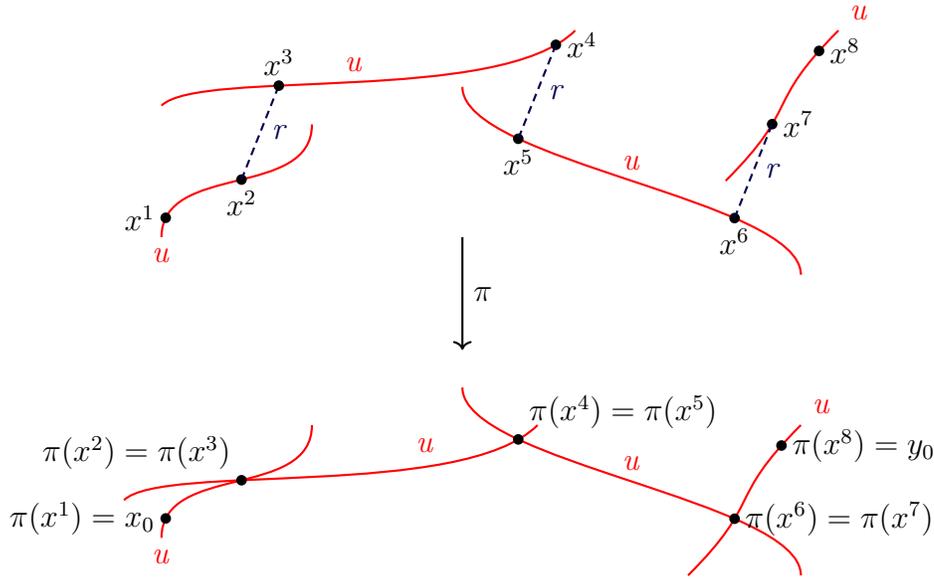

Weak partially hyperbolic endomorphisms were the original setting in which we studied this problem, and as discussed above, we can define accessibility to state our result with $\dim(E^s)=0$.
All proofs, including the main result, can naturally be adapted to that case.

\subsection{Accessibility in strong partially hyperbolicity case}
When both ${\dim(E^s)>0}$ and $\dim(E^u)>0$ accessibility takes the characteristics of accessibility for strong partially hyperbolic diffeomorphisms and weak partially hyperbolic endomorphisms. We have a stable foliation such that through each point we have a unique stable leaf illustrated in Figure \ref{fig:StableProjection}.
In the
general
case, we also have intersecting unstable manifolds through a point. 

In the general setting of Theorem \ref{main},
it is possible for a \textit{su}-path to go from one unstable manifold through a point to another. This means that when the path is lifted to the space of orbits $M_f$, there is a path that has legs in all of the stable, unstable, and the fibre or ``$r$'' directions. This behaviour is illustrated in Figure \ref{fig:endostrong}.

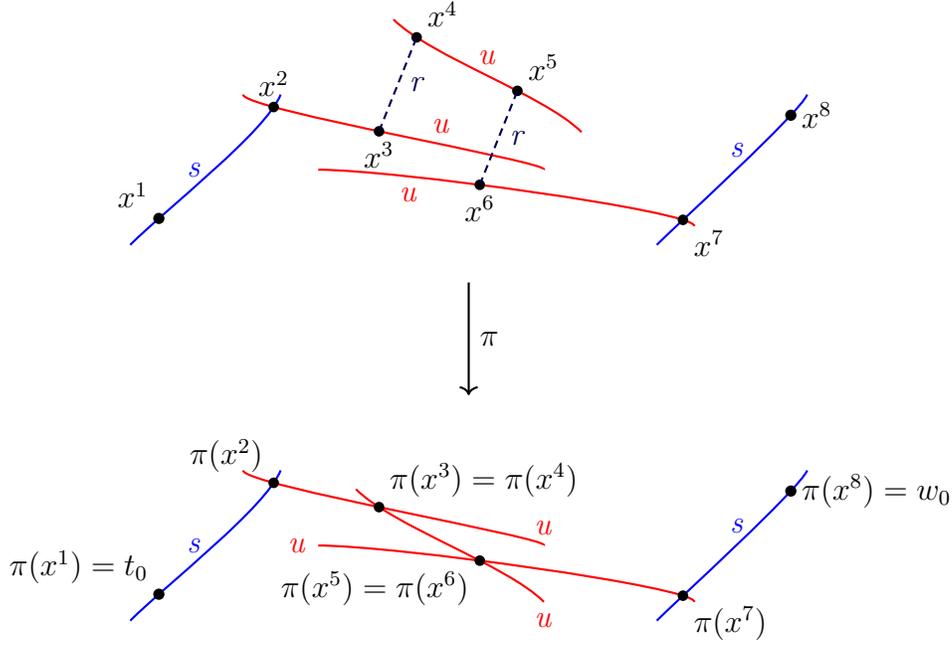
\begin{figure}
    \centering
\begin{tikzpicture}
\draw[name path=s11,blue,thick] (-.5,-1) .. controls +(0.5,.5) and +(-.2,-.5) .. node[pos=.2] (x1) {} node[midway, left] {\textit{s}} (1.5,1);
\draw[name path=s22,blue,thick] (6.5,-1) .. controls +(0.3,.3) and +(-.1,-.25) .. node[pos=.8] (x88) {} node[midway,above] {\textit{s}} (8.5,1);
\draw[name path=u11,red,thick] (1,1) .. controls +(0,-.2) and +(0.1,0.1) .. node[pos=.6, above] {\textit{u}} (5,0);
\draw[name path=u22,draw=none,thick] (5,-.75) .. controls +(-0.5,0.5) and +(.5,-.5) ..  (2.5,.75);
\draw[name path=u33,red,thick] (2,0) .. controls +(1,0) and +(0,0.2) .. node[near start, below] {\textit{u}} (7,-.75);
\node[circle,inner sep=1.5pt,fill=black] at (x1) {}; 
\node[above left] at (x1) {$x^1$};
\fill [name intersections={of=s11 and u11, by=x2}] (x2) circle [radius=2pt];
\node[above] at (x2) {$x^2$};
\fill [name intersections={of=u11 and u22, by=x33}] (x33) circle [radius=2pt];
\node[below] at (x33) {$x^3$};
\fill [name intersections={of=u22 and u33, by=x66}] (x66) circle [radius=2pt];
\node[below] at (x66) {$x^6$};
\fill [name intersections={of=u33 and s22, by=x77}] (x77) circle [radius=2pt];
\node[below right] at (x77) {$x^7$};
\node[circle,inner sep=1.5pt,fill=black] at (x88) {}; 
\node[right] at (x88) {$x^8$};
\begin{scope}[shift={(.5,1.25)}]
\draw[name path=u111,draw=none,thick] (1,1) .. controls +(0,-.2) and +(0.1,0.1) ..  (5,0);
\draw[name path=u222,red,thick] (5,-.75) .. controls +(-0.5,0.5) and +(.5,-.5) .. node[midway, above] {\textit{u}} (2.5,.75);
\draw[name path=u333,draw=none,thick] (2,0) .. controls +(1,0) and +(0,0.2) .. (7,-.75);
\fill [name intersections={of=u111 and u222, by=x444}] (x444) circle [radius=2pt];
\fill [name intersections={of=u222 and u333, by=x555}] (x555) circle [radius=2pt];
\node[above right] at (x444) {$x^4$};
\node[above right] at (x555) {$x^5$};
\end{scope}
\draw[name path=s3,densely dashed, color=blue!30!black,thick] (x33) -- node[midway,right] {\textit{r}} (x444);    
\draw[name path=s3,densely dashed, color=blue!30!black,thick] (x66) -- node[midway,right] {\textit{r}} (x555);    
\draw[->,thick] (4,-1.5) to node[anchor=west] {$\pi$} (4,-3);
\begin{scope}[shift={(0,-5)}]
\draw[name path=s1,blue,thick] (-.5,-1) .. controls +(0.5,.5) and +(-.2,-.5) .. node[pos=.2] (x) {} node[midway, left] {\textit{s}} (1.5,1);
\draw[name path=s2,blue,thick] (6.5,-1) .. controls +(0.3,.3) and +(-.1,-.25) .. node[pos=.8] (y) {} node[midway,above] {\textit{s}} (8.5,1);
\draw[name path=u1,red,thick] (1,1) .. controls +(0,-.2) and +(0.1,0.1) .. node[pos=1, above] {\textit{u}} (5,0);
\draw[name path=u2,red,thick] (5,-.75) .. controls +(-0.5,0.5) and +(.5,-.5) .. node[pos=0, below] {\textit{u}} (2.5,.75);
\draw[name path=u3,red,thick] (2,0) .. controls +(1,0) and +(0,0.2) .. node[pos=0, left] {\textit{u}} (7,-.75);

\fill [name intersections={of=s1 and u1, by=x2}] (x2) circle [radius=2pt];
\fill [name intersections={of=u1 and u2, by=x3}] (x3) circle [radius=2pt];
\fill [name intersections={of=u2 and u3, by=x4}] (x4) circle [radius=2pt];
\fill [name intersections={of=u3 and s2, by=x5}] (x5) circle [radius=2pt];
\node[circle,inner sep=1.5pt,fill=black] at (y) {}; 
\node[circle,inner sep=1.5pt,fill=black] at (x) {}; 
\node[right] at (y) {$\pi(x^8) =w_0$};
\node[above left] at (x) {$\pi(x^1)=t_0$};
\node[above left] at (x2) {$\pi(x^2)$};
\node[above right] at (x3) {$\pi(x^3)=\pi(x^4)$};
\node[below left] at (x4) {$\pi(x^5)=\pi(x^6)$};
\node[below right] at (x5) {$\pi(x^7)$};
\end{scope}
\end{tikzpicture}
    \caption{An $su$-path in the base manifold $M$ lifted to the space of orbits where it becomes a path with stable, unstable and ``$r$'' segments.}
    \label{fig:endostrong}
\end{figure}

\subsection{Proof outline}
We now briefly outline the structure of the proof of \cite{BW10} and then explain
the major modifications we make to prove our version in the
non-invertible setting.
The proof in \cite{BW10} is a version of the Hopf argument.
In order to show that $f$ is ergodic with respect to the measure $\mu,$
consider a continuous function $\phi \colon M \to \R$ and let
$\phi^s \colon M \to \R$ and $\phi^u\colon M \to \R$ denote the forward and backward
Birkhoff averages respectively.
The Birkhoff Ergodic Theorem implies that $\phi^s$ and $\phi^u$
exist and are equal $\mu$-almost everywhere.
Fix a constant $a \in \bbR$ and define the measurable sets
\[
    A^s \coloneqq \{ x \in M : \phi^s(x) \le a \} 
    \qandq
    A^u \coloneqq \{ x \in M : \phi^u(x) \le a \}.
\]

To prove ergodicity, it is enough to prove for all continuous $\phi$
and all $a \in \R$ that $A^s$ has either full measure or zero measure.
By the Lebesgue Density Theorem,
this is equivalent to showing that the set $X$
of all Lebesgue density points of $A^s$ has full or zero measure.
Burns and Wilkinson show that the set of Lebesgue density points is saturated
in the stable direction; that is, $X$ consists of a union of complete
stable leaves.
Since $A^s$ and $A^u$ are equal mod zero,
$X$ is also the set of Lebesgue density points of $A^u$ and
an analogous argument shows that $X$ is a union of unstable leaves.
Therefore $X$ is a measurable set which is both
$u$-saturated and $s$-saturated, and the assumption of essential accessibility
implies that it must have full or zero measure.

We now look at their proof of stable saturation in more detail.
As in \cite{BW10}, we adopt the notation for the measure $\mu$ (or any measure)
and measurable sets $A$ and $B$ with $\mu(B) > 0$ that the conditional measure is
defined as
\[
    \mu(A : B) \ = \ \frac{\mu(A \cap B)}{\mu(B)}.
\]
A Lebesgue density point for the set $A^s$ is then a point $x \in M$
which satisfies
$\lim_{r \searrow 0} \mu(A^s : B_r(x)) = 1$
where $B_r(x)$ is the ball of radius $r$ centered at $x.$
To show that $A^s$ is $s$-saturated,
Burns and Wilkinson construct for each point $x \in M$ a sequence of
``$cu$-juliennes'' $\Jcu_n(x)$ and show
(using other carefully defined sequences of juliennes) that
\[
    \lim_{r \searrow 0} \mu(A^s : B_r(x)) = 1
    \quad \Leftrightarrow \quad
    \lim_{n \to \infty} m^{cu}(A^s : \Jcu_n(x)) = 1.
\]
Each $cu$-julienne is a small embedded submanifold containing $x,$
has the same dimension as $\Ec \oplus \Eu,$
and is approximately tangent to $\Ec \oplus \Eu.$
The measure $m^{cu}$ is defined using the Riemannian metric on the julienne.
These juliennes are nested,
$\Jcu_1(x) \supset \Jcu_2(x) \supset \Jcu_3(x) \supset \ldots $,
and their diameters shrink exponentially fast with $n.$
In fact, they shrink at different exponential rates in the
center and unstable directions.
If $x$ and $x'$ are nearby points on the same stable manifold,
we can define a stable holonomy, $h^s,$ which takes points on
a $cu$-julienne through $x$ to a $cu$-julienne through $x'$
by travelling along the stable leaves.
They prove an ``internesting'' property:
there is a uniform constant integer $k$ such that
\[
    \Jcu_{n+k}(x') \subof h^s(\Jcu_n(x)) \subof \Jcu_{n-k}(x')
\]
holds for any two points $x$ and $x'$ on the same local stable leaf.
From this, they prove
\[
    \lim_{n \to \infty} m^{cu}(A^s : \Jsc_n(x)) = 1
    \quad \Leftrightarrow \quad
    \lim_{n \to \infty} m^{cu}(A^s : \Jsc_n(x')) = 1
\]
which combines with the earlier equivalence to show that
$X$ is $s$-saturated.

The definition of partial hyperbolicity for a diffeomorphism
is symmetric in the sense that $f$ is partially hyperbolic if and only if
$f \inv$ is partially hyperbolic with the roles of $s$ and $u$ swapped.
This means that an analogous argument can be given to define a sequence of
$sc$-juliennes and use the unstable holonomy to prove that $X$ is $u$-saturated.

\medskip{}

We now outline how we have adapted these techniques to the non-invertible
setting. Let $f \colon M \to M$ now denote a partially hyperbolic endomorphism
which satisfies the assumptions of Theorem \ref{main}.
Recall from above the definitions of $M_f$ the space of orbits of $f$,
the left shift map $\sig$
and the projection $\pi$.
The $f$-invariant measure $\mu$ on $M$ lifts naturally to a $\sig$-invariant
measure $\nu$ on $M_f$ such that $\pi_* \nu = \mu$.

Let $\varphi_0 \colon M \to \bbR$ be a continuous function and
let $\varphi = \varphi_0 \circ \pi \colon M_f \to \bbR$ be its lift to $M_f.$
As $f$ is non-invertible, $\varphi_0$ only has a forward Birkhoff average
$\varphi_0^+ \colon M \to \bbR.$ As the shift map $\sig$ is a homeomorphism of $M_f,$
$\varphi$ has both forward and backward Birkhoff averages,
$\varphi^+, \varphi^- \colon M_f \to \bbR,$ which are equal $\nu$-almost everywhere.
Moreover, $\varphi^+ = \varphi_0^+ \circ \pi.$
Fix a constant $a \in \bbR$ and define
\begin{align*}
    A_0
        &= \{ x_0 \in M : \varphi_0^+(x_0) \le a \},
    \\
    A^{rs}
        &= \{ x \in M_f : \varphi^+(x) \le a \}, \quad \text{and}
    \\
    A^{u}
        &= \{ x \in M_f : \varphi^-(x) \le a \}.
\end{align*}
Note that $A^{rs}$ and $A^{u}$ are equal modulo a set of $\nu$-measure zero
and that $\pi \inv(A_0) = A^{rs}.$
To prove ergodicity, it is enough to show that the Lebesgue density points
of $A_0$ have either full or zero $\mu$-measure in $M.$

Locally, the space of orbits has the topological structure of a manifold
times a Cantor set.
Specifically, if $U \subof M$ is a small open set and $x_0$ is a point in $U,$
then 
$\pi \inv(U) \subof M_f$ is homeomorphic to $U \ti \pi \inv(x_0)$
where the fiber $\pi \inv(x_0)$ is a Cantor set.
Throughout the proof, we abuse notation and treat $\pi \inv(U)$
as if it were equal to $U \ti \pi \inv(x_0).$
Moreover, the assumption of constant Jacobian means that
the measure $\nu$ restricted to $\pi \inv(U) = U \ti \pi \inv(x_0)$
is a product measure coming from the restriction of $\mu$ to $U$
and a Bernoulli measure on the Cantor set $\pi \inv(x_0).$

We will show that the set of Lebesgue density points of $A_0$ is both saturated
by stable leaves, which form a well-defined foliation on $M,$
and by the images of unstable leaves under the projection
$\pi \colon M_f \to M.$
These projections of unstable leaves do not form a foliation on $M,$
and therefore in the non-invertible setting,
the proofs of stable saturation
and unstable saturation are quite different.

\medskip{}

For the stable saturation, consider a small open disk $U \subof M$
and two points $x_0, x_0' \in U$ which are connected by a short stable
segment inside of $U.$
Recall that we have identified $\pi \inv(U)$ with the product space
$U \ti \pi \inv(x_0)$ and that the measure $\nu$ restricted to this set
is a product measure on $U \ti \pi \inv(x_0).$
Since $A^{rs}$ is equal to $A^{u}$ modulo a set of $\nu$-measure zero,
it follows by Fubini's theorem that for almost every copy of $U$
in $U \ti \pi \inv(x_0),$ the sets
$A^{rs} \cap (U \ti x)$ and $A^{u} \cap (U \ti x)$ are equal
almost everywhere on $U \ti x.$
We can then use the \cite{BW10} argument nearly unchanged inside any one of these
``good'' copies of $U$ to prove that $x_0$ is a Lebesgue density point of $A_0$
if and only if $x_0'$ is a Lebesgue density point of $A_0$.

\medskip{}

The argument for unstable saturation is more complicated.
Assume now that $x$ and $x'$ are points in $M_f$ connected by a short unstable
segment and define $x_0 = \pi(x)$ and $x_0' = \pi(x').$
We may assume that $x,$ $x',$ and the unstable segment between them
all lie in $\pi \inv(U) = U \ti \pi \inv(x_0)$ for some small open disc $U.$
The problem now is that we are not free here to choose a ``good'' copy of $U.$
It may be the case that $U \ti x$ is the only copy of $U$ inside of
$\pi \inv(U) = U \ti \pi \inv(x_0)$ for which the two points in the fibers over $x_0$
and $x_0'$ lie on the same unstable manifold.
Further, $U \ti x$ may be a ``bad'' manifold in the sense that
the symmetric difference between
$A^{rs} \cap (U \ti x)$ and $A^{u} \cap (U \ti x)$ may have positive measure.
Therefore, we cannot simply apply the \cite{BW10} argument inside of $U \ti x.$

Instead, we define a new sequence of juliennes
$\Jrsc_n(x)$ which also extend into the fiber or ``$r$'' direction.
A julienne of this form contains infinitely many copies of $U,$
most of which are ``good'' in the sense of Birkhoff's Ergodic Theorem
and we can show that
\[
    \lim_{r \searrow 0} \mu(A_0 : B_r(x)) = 1
    \quad \Leftrightarrow \quad
    \lim_{n \to \infty} \widehat{\nu}^{rsc} (A^{rs} : \Jrsc_n(x)) = 1.
\]
for an appropriately defined measure $\widehat{\nu}^{rsc}$ on the julienne.

We adapt the internested holonomy argument in \cite{BW10} to show that
\[
    \lim_{n \to \infty} \widehat{\nu}^{rsc}(A^{rs} : \Jrsc_n(x)) = 1
    \quad \Leftrightarrow \quad
    \lim_{n \to \infty} \widehat{\nu}^{rsc}(A^{rs} : \Jrsc_n(x')) = 1
\]
and from this we conclude that $x_0$ is a Lebesgue density point for $A_0$
if and only if $x_0'$ is.
For the definition of essential accessibility in the non-invertible setting,
these forms of stable and unstable saturation of $A_0$ are enough to
conclude that the system is ergodic.

\medskip{}

The summary in the above paragraphs glosses over many subtleties arising in
the proofs. The full details are given in the sections that follow this
introduction. Many of the lemmas are reformulations of corresponding lemmas in
\cite{BW10}. If the proofs are similar enough to the original versions, we simply refer
the reader to the corresponding lemma in \cite{BW10}. In some cases, the proof differs
enough that we provide an adapted proof in Appendix \ref{Appendix proofs}. Some results are
unique to the non-invertible case and the proof appears in the main text.

\subsection{K-property}
Burns and Wilkinson establish the Kolmogorov or K-property
for the partially hyperbolic diffeomorphisms that they consider.
In fact, they prove ergodicity and then use a result of Brin and Pesin 
\cite{BP74}
to show that the system has the K-property.
In Appendix \ref{appendix:K-property}, we adapt these arguments to show that
the invertible system $\sig \colon M_f \to M_f$ on the space of orbits has the
K-property with respect to the lifted measure $\nu.$
This implies, among other ergodic properties,
that $\sig$ is mixing with respect to $\nu$
and therefore the original endomorphism $f \colon M \to M$
is mixing with respect to the invariant volume $\mu.$
\subsection*{Acknowledgements}
The authors would like to thank Jason Atnip, Keith Burns, Marisa Dos Reis Cantarino, Matilde Martínez, Rafael Potrie, Jana Rodriguez Hertz, Warwick Tucker, Raúl Ures and Amie Wilkinson for useful conversations.

\section{Topological structure of the space of orbits}
\label{Sec: Topological structure of the space of orbits}
This section looks at the topological structure of the space of orbits $M_f$
in more detail.
Let the partially hyperbolic endomorphism $f \colon M \to M,$
space of orbits $M_f,$
shift map $\sig \colon M_f \to M_f,$ and 
projection $\pi \colon M_f \to M$
be as in the introduction.

In order to define a metric on $M_f$, first
fix a small positive constant $0 < \lam^r < 1.$
In particular, we assume that
\[
    \lam^r < \| Df(v) \|
\]
holds for all unit vectors $v \in TM.$
Write $\lam^r_n$ for the $n$-th power of $\lam^r.$
That is, $\lam^r_n = [\lam^r]^n$.
This switches the usual roles of subscript and superscript,
but doing so is consistent with the cocycle notation
$\lam^s_n, \lam^c_n, \hat\lam^c_n, \lam^u_n$
used in later sections.
As mentioned in the introduction, ``$r$'' stands for fiber direction.

Since $f : M \to M$ is a covering map, there is a radius
$0 < \rcov \le 1$ with the property that for any $x \in M,$
the preimage $f \inv(B(x, \rcov))$ has exactly $\deg(f)$ connected components
and $\dist(U, V) > \rcov$ for any two distinct connected components of this
preimage.

By rescaling the Riemannian metric on $M,$ we may assume that $\rcov = 1$ and that the injectivity radius of $f$ on the base manifold is greater than 1.

Define a metric $\dcap$ on $M$ by
$\dcap(x, y) = \min \{ d_M(x, y), 1 \}$
where $d_M$ is the usual metric on $M$ coming from the Riemannian metric.
Then define a metric $d$ on the space of orbits by
\[
    d(x,y) = \sup_{n \ge 0} \{ \lam^r_n \dcap(x_{-n}, y_{-n}) \}
\]
for orbits $x = \{x_n\}$ and $y = \{y_n\}$ in $M_f.$
This metric has several nice properties.

To state the first of these properties, we define
\begin{align*}
    \mathcal{W}^{r}(x) & \coloneqq \{y\in M_f : {\exists n\in \mathbb{Z}}\allowbreak \text{ such that } {x_n=y_n}\},\qandq \\
    \mathcal{W}^{r}\loc(x) & \coloneqq \{y\in M_f : x_0=y_0\}.
\end{align*}
Note that ${\mathcal{W}^{r}\loc(x)=\pi^{-1}(x_0)}$.
\begin{lema} \label{lemma:fibermetric}
    For distinct orbits $x = \{x_n\}$ and $y = \{y_n\}$ with $y \in \Wrloc(x),$
    the metric satisfies
    $d(x, y) = \lam^r_m$ where $m \in \bbZ$ is such that
    $x_n = y_n$ for all $n > -m$ and
    $x_{-m} \ne y_{-m}.$
\end{lema}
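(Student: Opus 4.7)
The plan is to identify exactly which term in the supremum defining $d(x,y)$ is the maximum, by combining the forward determinism of orbits with the covering-map separation property already built into the normalization $\rcov = 1$.

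First, I would pin down the structure of where $x$ and $y$ can disagree. Since $y \in \Wrloc(x)$, we have $x_0 = y_0$, and because $\pi \circ \sig = f \circ \pi$ forces $x_{n+1} = f(x_n) = f(y_n) = y_{n+1}$, induction shows $x_n = y_n$ for every $n \ge 0$. As $x \ne y$ they must differ at some negative index, so there is a smallest positive integer $m$ with $x_{-m} \ne y_{-m}$; this is the $m$ named in the statement and it satisfies $x_n = y_n$ for all $n > -m$.

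Next, I would evaluate $\lam^r_n \dcap(x_{-n}, y_{-n})$ for each $n \ge 0$ in the supremum. For $0 \le n < m$, we have $-n > -m$, so $x_{-n} = y_{-n}$ and the term is zero. The crucial term is at $n = m$: the points $x_{-m}$ and $y_{-m}$ are two distinct preimages under $f$ of the common point $x_{-m+1} = y_{-m+1}$ (or of $x_0 = y_0$ in the case $m = 1$). By the defining property of $\rcov$, distinct components of a preimage under $f$ are separated by more than $\rcov$, and the rescaling of the Riemannian metric has made $\rcov = 1$. Hence $d_M(x_{-m}, y_{-m}) > 1$, which gives $\dcap(x_{-m}, y_{-m}) = 1$ and therefore a term equal to $\lam^r_m$. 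Finally, for $n > m$, since $\lam^r < 1$ we get $\lam^r_n < \lam^r_m$, and the factor $\dcap \le 1$ keeps the term strictly below $\lam^r_m$.

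Combining these three ranges shows the supremum is attained at $n = m$ and equals $\lam^r_m$. The only real subtlety, which I would flag carefully in writing the proof, is the appeal to the covering-map separation to conclude $\dcap(x_{-m}, y_{-m}) = 1$ rather than just some positive number; this is what makes the metric align so cleanly with the integer $m$. Everything else is a direct calculation from the definitions of $d$, $\dcap$, and $\Wrloc$.
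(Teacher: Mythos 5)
Your proof is correct and follows essentially the same line as the paper's: identify the first index of disagreement $m$ (necessarily positive by forward determinism of orbits), use the normalization $\rcov = 1$ together with the covering-map separation of preimage components to conclude $\dcap(x_{-m}, y_{-m}) = 1$, and observe that all earlier terms vanish while all later terms are dominated by $\lam^r_m$. You spell out the forward-determinism step and the preimage-separation reasoning a bit more explicitly than the paper does, but the underlying argument is identical.
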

\begin{proof}
    Since $y \in \Wrloc(x)$ such an integer $m > 0$ exists.
    The assumption above that $\rcov = 1$ implies that
    $\dcap(x_{-m}, y_{-m}) = 1.$
    Since $\dcap(x_n, y_n) = 0$ for all $n > -m$
    and $\dcap(x_n, y_n) \le 1$ for all $n < -m,$
    it follows that
    $d(x, y) = \lam^r_m.$
\end{proof}
To state the second property of the metric \( d \), we define \( \mathcal{W}^{scu}(x) \), the \textit{global scu-sheet through \( x \)}, as the connected component of \( M_f \) that contains \( x \).
We also define \( \mathcal{W}^{scu}(x, r) \) as the connected component of \( \mathcal{W}^{scu}(x) \cap B(x, r) \) that contains \( x \).
Here, \( B(x, r) \) denotes the ball of radius \( r \) centred at \( x \) with respect to the metric $d$.
\begin{lema} \label{lemma:scumetric}
    For orbits $x = \{x_n\}$ and $y = \{y_n\}$ with $y \in \Wscu(x, 1),$
    the metric satisfies 
    $d(x, y) = d_M(x_0, y_0).$
\end{lema}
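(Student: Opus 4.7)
The strategy is to show that the supremum in $d(x,y)=\sup_{n\ge 0}\lam^r_n \dcap(x_{-n},y_{-n})$ is attained at $n=0$, which forces $d(x,y)=\dcap(x_0,y_0)=d_M(x_0,y_0)$. The first step is straightforward: since $y\in B(x,1)$, we have $\dcap(x_0,y_0)\le d(x,y)<1$, so $\dcap(x_0,y_0)=d_M(x_0,y_0)$ and this value is a lower bound for $d(x,y)$. Everything reduces to proving
\begin{equation*}
\lam^r_n\,\dcap(x_{-n},y_{-n})\ \le\ d_M(x_0,y_0)\qquad\text{for all }n\ge 1.
\end{equation*}

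The key ingredient is an inductive expansion estimate. Since $\rcov=1$ and the injectivity radius of $f$ exceeds $1$, the restriction of $f$ to any ball of radius $1$ is a diffeomorphism onto its image, and from $\lam^r<\|Df(v)\|$ each such inverse branch is $(\lam^r)^{-1}$-Lipschitz. Consequently, whenever $y_{-n}$ is the inverse-branch preimage of $y_{-n+1}$ whose branch contains $x_{-n}$, we get $d_M(x_{-n},y_{-n})\le (\lam^r)^{-1}d_M(x_{-n+1},y_{-n+1})$; iterating,
\begin{equation*}
(\lam^r)^n d_M(x_{-n},y_{-n})\ \le\ d_M(x_0,y_0),
\end{equation*}
which yields the desired inequality (and confirms $d_M(x_{-n},y_{-n})<1$, so capping is harmless).

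The substance of the proof is therefore to justify the ``correct branch'' assertion at every $n$. This is where the hypothesis $y\in\Wscu(x,1)$ is used. By the local product structure $\pi\inv(U)\cong U\ti\pi\inv(x_0)$ with $\pi\inv(x_0)$ a Cantor set, $M_f$ is locally path-connected, so the connected set $\Wscu(x,1)$ is in fact path-connected. Fix a path $\gamma\colon[0,1]\to\Wscu(x,1)$ with $\gamma(0)=x$ and $\gamma(1)=y$, and let $\gamma_{-n}(t)\coloneqq\pi\circ\sig^{-n}(\gamma(t))$; this is a continuous curve in $M$ starting at $x_{-n}$. I would run induction on $n$: assuming $\gamma_{-k}(t)$ stays in the ``right'' inverse branch near $x_{-k}$ for all $k<n$ and all $t$, and assuming the inductive bound $d_M(x_{-k},\gamma_{-k}(t))\le (\lam^r)^{-k}d_M(x_0,\gamma_0(t))<1$ along the whole path, one then checks the same at level $n$ by a clopen argument in $t$: the set of $t$ for which $\gamma_{-n}(t)$ lies in the correct branch is nonempty (contains $0$), open (the branch is open), and closed (because distinct branches of $f\inv$ are separated by at least $\rcov=1$, so a failure requires a discontinuous jump, contradicting continuity of $\gamma_{-n}$).

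The main obstacle is precisely this bootstrap: controlling the lifted path simultaneously at every level $n$ so that the estimate $d_M(x_{-n},\gamma_{-n}(t))<1$ never fails along the path. The separation property of covering branches ($\dist(U,V)>\rcov=1$ for distinct components of $f\inv$) is essential here, as it converts the a priori $d$-metric bound into the metric separation needed to exclude wrong branches at each step.
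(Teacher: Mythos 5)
Your argument has a gap in the bootstrap step. You assert the inductive bound $d_M(x_{-k},\gamma_{-k}(t))\le (\lam^r)^{-k}d_M(x_0,\gamma_0(t))<1$, but the final inequality does not hold: the only a priori information is $d_M(x_0,\gamma_0(t))<1$ (coming from $d(x,\gamma(t))<1$), and since $(\lam^r)^{-1}>1$ the quantity $(\lam^r)^{-k}d_M(x_0,\gamma_0(t))$ exceeds $1$ once $k$ is large. There is no reason $d_M(x_{-n},y_{-n})$ should remain below $1$ as $n\to\infty$; even the conclusion of the lemma only forces $\lam^r_n\,\dcap(x_{-n},y_{-n})\le d_M(x_0,y_0)$, which is perfectly compatible with $d_M(x_{-n},y_{-n})$ growing past $\rcov=1$. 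Once the backward iterates separate by more than $\rcov$, the inverse-branch picture you rely on (a single $(\lam^r)^{-1}$-Lipschitz branch defined over a ball of radius $1$ that contains both points, with other branches separated by more than $1$) is no longer available, so both the Lipschitz step and the clopen branch-tracking argument fail.

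The paper avoids this problem entirely by comparing lengths rather than distances. It lifts a minimizing geodesic $\al_0$ from $x_0$ to $y_0$ to a path $\al$ in $\Wscu(x,1)$ (using the product structure $\pi\inv(B(x_0,1))\cong B(x_0,1)\ti\pi\inv(x_0)$, which guarantees the lift ends at $y$), sets $\al_n=\pi\circ\sig^{-n}\circ\al$, and notes $f^n\circ\al_n=\al_0$. Because $\|Df(v)\|>\lam^r\|v\|$, one gets $\operatorname{length}(\al_n)<(\lam^r)^{-n}\operatorname{length}(\al_0)=(\lam^r)^{-n}d_M(x_0,y_0)$, and then $\dcap(x_{-n},y_{-n})\le d_M(x_{-n},y_{-n})\le\operatorname{length}(\al_n)$ gives the needed bound without ever requiring $\operatorname{length}(\al_n)<1$. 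To repair your approach along the same lines, replace the pointwise distance iteration with the length estimate $\operatorname{length}(\gamma_{-n})\le(\lam^r)^{-1}\operatorname{length}(\gamma_{-n+1})$, which follows directly from the derivative bound with no branch tracking, and choose $\gamma$ so that its projection $\gamma_0$ is a minimizing geodesic, giving $\operatorname{length}(\gamma_0)=d_M(x_0,y_0)$.
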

\begin{proof}
    
    By the Hopf-Rinow theorem, there is a path $\al_0 : [0,1] \to M$
    from $x_0$ to $y_0$ such that
    $\operatorname{length}(\al_0) = d_M(x_0, y_0).$
    Lift $\al_0$ to a path $\al : [0,1] \to \mathcal{W}^{scu}(x, 1)$
    from $x$ to $y,$ and then define a sequence of paths $\al_n : [0,1] \to M$
    by $\al_n = \pi \circ \sig^{-n} \circ \al.$
    In particular, $\al_n$ is a path from $x_n$ to $y_n$ such that $f^n \circ \al_n = \al_0.$
    The derivative $Df$ contracts vectors by less than $\lam^r$ and so
    \[
        \lam^r_n \dcap(x_{-n}, y_{-n})
        \ \le \ \lam^r_n \operatorname{length}(\al_n)
        \ < \ \operatorname{length}(\al_0)
        \ = \ d_M(x_0, y_0).
    \]
    Taking the supremum over all $n \ge 0$ gives $d(x, y) = d_M(x_0, y_0).$
\end{proof}

In \cite{aoki1994topological}, it is shown that if $\deg(f)>1$ then $(M_f,M,C,\pi)$ is a fibre bundle where $C$ denotes the Cantor set.
In particular, the next result follows from \cite[Theorem 6.5.1]{aoki1994topological}. \par 
\begin{prop}
\label{canon}
    If $U\subset M$ is an open and simply connected set and $x_0\in U$, then there is a canonical homeomorphism from $ U\times \pi^{-1}(x_0)$ to $\pi^{-1}(U)$. Moreover, $\pi^{-1}(x_0)$ is homeomorphic to the Cantor set.
\end{prop}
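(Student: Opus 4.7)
The plan is to construct the canonical homeomorphism $\Phi\colon U\times \pi^{-1}(x_0)\to \pi^{-1}(U)$ explicitly by lifting backward iterates, and then to identify the fiber $\pi^{-1}(x_0)$ with a space of sequences of preimage choices. The key tool is that since $f$ is a covering map of $M$, each iterate $f^n$ is also a covering map (of degree $\deg(f)^n$), and therefore for a simply connected open $U \subof M$ the preimage $(f^n)^{-1}(U)$ splits as a disjoint union of open sets, each mapped homeomorphically onto $U$ by $f^n$. In particular, for every orbit $x = \{x_k\} \in \pi^{-1}(x_0)$ and every $n \ge 1$, there is a unique local section $\psi_n^x \colon U \to M$ of $f^n$ satisfying $\psi_n^x(x_0) = x_{-n}$, namely the inverse of the restriction of $f^n$ to the unique component of $(f^n)^{-1}(U)$ containing $x_{-n}$.

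I would then define $\Phi(y,x)$ to be the bi-infinite sequence whose $k$-th entry is $f^k(y)$ for $k \ge 0$ and $\psi_n^x(y)$ for $k = -n < 0$. Uniqueness of the sections forces $f \circ \psi_{n+1}^x = \psi_n^x$, so the resulting sequence really is a full orbit of $f$, and clearly its image lies in $\pi^{-1}(U)$. To invert $\Phi$, given $z = \{z_k\} \in \pi^{-1}(U)$ I would set $y = z_0$ and, for each $n \ge 1$, let $x_{-n}$ be the unique element of $(f^n)^{-1}(x_0)$ that lies in the same connected component of $(f^n)^{-1}(U)$ as $z_{-n}$ (such a point exists and is unique because this component maps homeomorphically onto $U$ and already contains $z_{-n}$). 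One checks from the definitions that these operations are inverse to one another.

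For continuity of $\Phi$, recall that $M_f$ inherits the product topology from $M^{\bbZ}$. If $(y^{(j)},x^{(j)}) \to (y,x)$, then for each fixed $n \ge 1$ the point $x^{(j)}_{-n}$ converges to $x_{-n}$, so eventually it lies in the same component of $(f^n)^{-1}(U)$; thus $\psi_n^{x^{(j)}} = \psi_n^x$ for all large $j$ and hence $\psi_n^{x^{(j)}}(y^{(j)}) \to \psi_n^x(y)$. The forward coordinates $f^k(y^{(j)}) \to f^k(y)$ are continuous automatically. Continuity of $\Phi^{-1}$ follows by the same discrete-branch argument applied to the component assignment $z \mapsto x_{-n}$. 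For the Cantor-set assertion, since $\deg(f) > 1$, an element of $\pi^{-1}(x_0)$ is determined by and determines a sequence of preimage choices $\{x_{-n}\}_{n \ge 1}$, where at each step we pick one of the $\deg(f)$ preimages of $x_{-n+1}$ under $f$; this identifies $\pi^{-1}(x_0)$ with $\{1,\dots,\deg(f)\}^{\bbN}$ equipped with the product topology, which is the Cantor set.

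The main obstacle is making sure the local sections $\psi_n^x$ are globally defined on all of $U$ and depend on $x$ in the ``locally constant'' way described above; this is exactly where simple connectedness of $U$ is essential, since without it the covering $f^n \colon (f^n)^{-1}(U) \to U$ might fail to split as a disjoint union of trivial sheets, and the definition of $\Phi$ would depend on a choice of path rather than producing a canonical map. Once this covering-theoretic point is set up, the rest of the verification is essentially bookkeeping, and the result is exactly the statement of \cite[Theorem 6.5.1]{aoki1994topological} specialised to the partially hyperbolic setting considered here.
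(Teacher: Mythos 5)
The paper does not prove this proposition; it cites \cite[Theorem 6.5.1]{aoki1994topological}, which establishes that $(M_f, M, C, \pi)$ is a fibre bundle with Cantor fibres when $\deg(f)>1$, and then reads off the local triviality. Your argument reconstructs that local triviality from first principles, and it is correct: trivialising each covering $f^n\colon(f^n)^{-1}(U)\to U$ over the connected, simply connected, locally path-connected open set $U$ yields the sections $\psi_n^x$; the compatibility $f\circ\psi_{n+1}^x=\psi_n^x$ (forced by uniqueness of sections over $U$) makes $\Phi(y,x)$ a genuine orbit; the ``component eventually stabilises'' argument gives continuity of $\Phi$ and $\Phi^{-1}$ in the product topology; and identifying $\pi^{-1}(x_0)$ with the set of infinite branches of a $\deg(f)$-ary tree --- which carry the same ``agree on a long tail'' topology as the subspace topology from $M^{\bbZ}$, consistent with the metric computed in Lemma \ref{lemma:fibermetric} --- gives the Cantor set. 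In effect you are reproving the cited theorem specialised to this setting; what that buys over the paper's citation is a transparent, explicit description of the trivialisation $\Phi$, which is what is actually used later when the restriction of $\nu$ to $\pi^{-1}(U)$ is expressed as the product $\nu^{scu}\times\nu^r$. Two hypotheses worth flagging explicitly: $U$ should be taken connected (the usual convention built into ``simply connected'', and needed for the covering over $U$ to trivialise into sheets indexed by $(f^n)^{-1}(x_0)$), and $\deg(f)>1$ is required for the fibre to be a Cantor set rather than a point --- both are tacitly assumed by the paper and you handle the second one correctly.
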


From this point on, if $x_0 \in U \subset M$ are as in this proposition,
we will abuse notation and treat $\pi^{-1}(U)$ and $U\times \pi^{-1}(x_0)$ as
if they are equal.

The naming of the $scu$-sheets comes down to the fact that with the associated differential structure  $$T\mathcal{W}^{scu}(x) = \bigcup_{y \in \mathcal{W}^{scu}(x)} T_y M_f$$ and the dominated splitting in that tangent bundle for any point $y\in\mathcal{W}^{scu}(x)$, $$T_y \mathcal{W}^{scu}(x) = E^s_y \oplus E^c_y \oplus E^u_y$$ we have that the $scu$-sheet is tangent to the stable, center and unstable directions for any point in it.
By Proposition \ref{canon}, $\mathcal{W}^{r}\loc(x)$ is homeomorphic to the Cantor set.
Since $M$ is compact, it can be covered by finitely many open sets of the form $\pi^{-1}(B(x_0,1))$, and from Propositions \ref{canon} and \ref{lemma:scumetric}, 
$$\pi^{-1}(B(x_0,1))=\bigsqcup_{y\in \mathcal{W}^{r}\loc(x)} \mathcal{W}^{scu}(y,1).$$
This shows that the space of orbits is a laminated space with a Cantor set transverse structure.

The \( scu \)-sheets are subfoliated by the stable leaves \( \mathcal{W}^s \), tangent to \( E^s \). They are also subfoliated by unstable leaves \( \mathcal{W}^u \), tangent to \( E^u \). For $a=u,s$, a \textit{foliation box} for $\mathcal{W}^a$ is the image of $\R^{n-d}\times\R^n$ that sends each vertical $\R^d$-slice into a leaf of $\mathcal{W}^a$.

Let $R>0$ be 
small enough to
ensure that each \( \mathcal{W}^{scu}(x, R) \) is contained within foliation boxes for both \( \mathcal{W}^s \) and \( \mathcal{W}^u \). 
For this choice of \( R \), we define the \textit{local \( scu \)-sheet} as \( \mathcal{W}^{scu}\loc(x) \coloneqq \mathcal{W}^{scu}(x, R) \).

Similarly to \( \mathcal{W}^{scu}(x, r) \), for the stable and unstable foliations as well as the fake invariant foliations introduced in the next section, we will write \( \mathcal{W}^{\beta}(x, r) \) for the connected component of \( \mathcal{W}^{\beta}(x) \cap B(x, r) \) that contains \( x \).
We will also define \( \mathcal{W}^{\beta}\loc(x) \coloneqq \mathcal{W}^{\beta}(x, R) \).
In the $r$-direction, we define $\mathcal{W}^r(x,r)\coloneqq \mathcal{W}^r\loc(x) \cap B(x,r)$, complementing $\mathcal{W}^r(x)$ and $\mathcal{W}^r\loc(x)$.

\begin{lema}\label{lemma:fibermetricinvariance}
    Given $q\in \mathcal{W}^{scu}\loc(p)$,
    let $h^{scu}\colon \mathcal{W}^r\loc(p)\to\mathcal{W}^r\loc(q)$ be the $scu$-holonomy.
    Then $h^{scu}$ is an isometry.
\end{lema}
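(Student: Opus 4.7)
The plan is to reduce the claim to the combinatorial statement that the scu-holonomy preserves the integer $m$ appearing in Lemma \ref{lemma:fibermetric}. Writing $y'_i = h^{scu}(y_i)$ for distinct orbits $y_1, y_2 \in \mathcal{W}^r\loc(p)$, both $d(y_1,y_2)$ and $d(y'_1, y'_2)$ are powers of $\lam^r$ by Lemma \ref{lemma:fibermetric}. It therefore suffices to prove, for every $n \ge 0$, the equivalence $(y_1)_{-n} = (y_2)_{-n} \iff (y'_1)_{-n} = (y'_2)_{-n}$. The case $n=0$ is immediate since every point of $\mathcal{W}^r\loc(p)$ projects to $p_0$ and every point of $\mathcal{W}^r\loc(q)$ projects to $q_0$. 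Moreover, since $(h^{scu})^{-1}$ is itself an scu-holonomy, it is enough to prove one of the two implications for all $n$.

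The key geometric ingredient is that the $scu$-sheets are $\sigma$-invariant: $\sigma^{-n}(\mathcal{W}^{scu}(y_i)) = \mathcal{W}^{scu}(\sigma^{-n}(y_i))$, so $\sigma^{-n}(y'_i)$ lies in the $scu$-sheet through $\sigma^{-n}(y_i)$. Fix a short path $\gamma : [0,1] \to M$ from $p_0$ to $q_0$ obtained by projecting a path in $\mathcal{W}^{scu}\loc(p)$ from $p$ to $q$; then for any $y \in \mathcal{W}^r\loc(p)$, the point $h^{scu}(y)$ is the endpoint of the unique lift of $\gamma$ to the local $scu$-sheet through $y$, and by applying $\sigma^{-n}$ the coordinate $(h^{scu}(y))_{-n}$ is the endpoint of a lift of $\gamma$ starting at $y_{-n}$ via the unique inverse branch of $f^n$ that sends the forward iterates $p_0, p_1, \ldots$ back to $y_{-n}, y_{-n+1}, \ldots, y_0$. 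This inverse branch depends only on the forward orbit of $y_{-n}$ up to time $0$, which is entirely determined by the finite tuple $(y_{-n}, y_{-n+1}, \ldots, y_0)$.

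Now assume $(y_1)_{-n} = (y_2)_{-n}$. Applying $f$ repeatedly, the tuples $(y_{1,-k})_{0 \le k \le n}$ and $(y_{2,-k})_{0 \le k \le n}$ coincide, so the two inverse branches of $f^n$ associated to $y_1$ and $y_2$ agree in a neighborhood of $p_0$. Consequently the two lifts of $\gamma$ starting at $(y_1)_{-n} = (y_2)_{-n}$ coincide, and their endpoints $(y'_1)_{-n}$ and $(y'_2)_{-n}$ are equal. Combined with the reverse implication (obtained by the same argument applied to $(h^{scu})^{-1}$), this proves the equivalence for all $n$, and Lemma \ref{lemma:fibermetric} yields $d(y_1, y_2) = d(y'_1, y'_2)$.

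The main technical obstacle is justifying the existence and uniqueness of the lifts used above, for all $n \ge 0$ simultaneously. This rests on choosing $R$ small enough that $\gamma$ has length well below $\rcov = 1$, so that each successive inverse branch of $f$ is uniformly defined; on using Proposition \ref{canon} to identify $\pi^{-1}$ of small balls with Cantor-transverse product neighborhoods in which the local $scu$-sheets are exactly the horizontal slices; and on observing that $\pi$ restricted to any $\mathcal{W}^{scu}(z, 1)$ is a homeomorphism onto its image, so that the lift of $\gamma$ in a sheet exists and is unique. Once these points are set up, the rest is straightforward bookkeeping.
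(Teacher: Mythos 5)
Your proof is correct and is essentially the same as the paper's, with the details arranged a bit differently. The paper also picks a projected path $\gamma$ and compares the lifts to the two $scu$-sheets; it defers the key observation (that if two orbits agree at coordinates $n>-m$ then their holonomy images also agree at those coordinates) to the phrase ``it can be shown that the curves $\alpha$ and $\beta$ can be chosen to also satisfy $\alpha_n(t)=\beta_n(t)$ for $t\in[0,1]$ and $n>-m$,'' then concludes by observing that $d(\alpha(t),\beta(t))$ is always a power of $\lambda^r$ and applying continuity plus discreteness of values. You make that lifting/inverse-branch observation explicit, and instead of the continuity--discreteness argument at the end you prove the exact equality of the agreement index $m$ directly by also applying the argument to $(h^{scu})^{-1}$. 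Both routes rest on the same fact -- uniqueness of the lift of $\gamma$ determined by the inverse branch of $f^n$ sending $p_0$ to $y_{-n}$ -- so the approaches are substantively identical.
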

\begin{proof}
    To show that the distance $d$ is preserved by $scu$-holonomies we are going to define two curves on $\pi^{-1}(B(p_0,1) )$. The curves $\alpha$ and $\beta$ will be such that $\alpha(0),\beta(0)\in  \mathcal{W}^r\loc(p)$ and $\alpha(1),\beta(1)\in  \mathcal{W}^r\loc(q)$.
    Since $\alpha(0)$ and $\beta(0)$ can be any two points in $\mathcal{W}^r\loc(p)$, it is sufficient to show that $d(\alpha(0),\beta(0))=d(\alpha(1),\beta(1))$.

    Let $d(\alpha(0),\beta(0))=\lambda^r_m$.
    It can be shown that the curves $\alpha$ and $\beta$ can be chosen to also satisfy $\alpha_n(t)=\beta_n(t)$ for $t\in [0,1]$ and $n>-m$. Since we only care about the endpoints of $\alpha$ and $\beta$, assume that they satisfy that property.
    Then, by Lemma \ref{lemma:fibermetric}, for all $t\in[0,1]$ the distance $d(\alpha_n(t),\beta_n(t))$ is a power of $\lambda^r$.
    Because $\alpha_n(t)$ and $\beta_n(t)$ are continuous, $d(\alpha_n(t), \beta_n(t))$ is also continuous in $t$, leading to the result.
\end{proof}

Finally, it is straightforward to verify that if $y\in \mathcal{W}^r\loc(x)$, then $d(\sigma^n x,\sigma^n y) = \lambda_n^r\, d(x,y)$ and since $\lambda^r<\min_{v\in T^1M}\{\|Df(v)\|\} $ we have that the contraction in the $r$-direction is stronger than in the $s$-direction. 
\section{Fake invariant foliations}
We build fake invariant foliations in a similar fashion
to Section 3 of Burns and Wilkinson \cite{BW10}
which is itself based on Chapter 5 of \cite{HSP}.
In the diffeomorphism setting,
Burns and Wilkinson look at the orbit $\{f^n(p)\}$ through
a point $p \in M$ in order to build fake foliations defined in
a small ball $B(p, r) \subset M$ centered at $p.$
In the endomorphism setting,
we consider an orbit $p = \{p_n\}_{n \in \mathbb{Z}} \in M_f$ and
use this orbit to construct fake foliations
in a small ball $B(p_0, r) \subset M$ centered at $p_0.$
Using the projection $\pi : M_f \to M,$
we can then lift the fake foliations to a ball 
centered at a point $p \in M_f$ and
contained in the $scu$-leaf through that point.

To state the result, we recall the multiplicative cocycle notation
used in \cite{BW10}.
If $\alpha \colon M_f \to \R$ is a positive function, and $j\geq 1$ is an integer, let 
$$\alpha_j(p)\coloneqq \alpha(p)\alpha(\sigma p) \cdots \alpha(\sigma^{j-1}p ),\quad \text{and}\quad  \alpha_{-j}(p)\coloneqq \alpha(\sigma^{-j}p )^{-1}\alpha(\sigma^{-j+1}p )^{-1}\cdots\alpha(\sigma^{-1} p)^{-1}. $$
By convention, $\alpha_0(p)=1$. Observe that $\alpha_j$ is a multiplicative cocycle and that $(\alpha\beta)_j=\alpha_j \beta_j$.

\begin{prop} \label{prop:fakefoln}
    Let $f : M \to M$ be a $C^1$ partially hyperbolic endomorphism.
    For any $\varepsilon > 0,$
    there exist constants $r$ and $r_1$ with $R > r > r_1 > 0$
    such that for every orbit $p = \{p_n\}_{n \in \mathbb{Z}} \in M_f,$
    the neighbourhood $B(p_0, r) \subset M$ is foliated by foliations
    $\widehat{\mathcal{W}}^u_{p}$, $\widehat{\mathcal{W}}^{s}_{p}$, $\widehat{\mathcal{W}}^{c}_{p}$,
    $\widehat{\mathcal{W}}^{cu}_{p}$, and $\widehat{\mathcal{W}}^{sc}_{p}$
    with the following properties for each
    $\beta \in \{u, s, c, cu, sc\}.$
    \begin{enumerate}[label={\upshape(\roman*)}]
        \item \label{Lemma_i:Almost tangency to invariant distributions} \textnormal{Almost tangency to invariant distributions.}
        For each $q_0\in B(p_0,r)$, the leaf $\widehat{\mathcal{W}}^\beta_{p}(q_0)$ is $C^1$, and the tangent space $T_{q_0}\widehat{\mathcal{W}}^\beta_{p}({q_0})$ lies in a cone of radius $\varepsilon$ about $\pi_*E^\beta({q})$.
        \item \textnormal{Local invariance.}
        For each $q = \{q_n\}_{n \in \mathbb{Z}}\in \mathcal{W}^{scu} (p,r_1)$,
        $$ f(\widehat{\mathcal{W}}^\beta_p (q_0,r_1))\subset \widehat{\mathcal{W}}^\beta_{\sigma(p)}(f(q_0)) \quad 
        \text{and}\quad 
\widehat{\mathcal{W}}^\beta_p (q_0,r_1)\subset f(\widehat{\mathcal{W}}^\beta_{\sigma^{-1}(p)}(q_{-1})).$$
\item\label{Prop_iii: Exponential growth bounds at local scales} \textnormal{Exponential growth bounds at local scales.} Let $q = \{q_n\}_{n \in \mathbb{Z}} \in M_f$ and $q' = \{q'_n\}_{n \in \mathbb{Z}} \in M_f,$ then the following holds for all $n\geq 0$:
        \begin{enumerate}[label={\upshape(\alph*)}]
            \item \label{Prop_iii_a} Suppose that $q_j\in B(p_j,r_1)$ for $0\leq j\leq n-1$.\par
            If $q'_0\in \widehat{\mathcal{W}}^s_{p}(q_0,r_1)$, then 
            $$q'_n\in \widehat{\mathcal{W}}^s_{\sigma^np}(q_n,r_1)\quad \text{and} \quad d_M(q_n,q'_n)\leq \lambda^{s}_n(p) \cdot d_M(q_0,q'_0).$$\par
            If $q'_j\in \widehat{\mathcal{W}}^{sc}_{\sigma^jp}(q_j,r_1)$ for $0\leq j\leq n-1$, then 
            $$q'_n\in \widehat{\mathcal{W}}^{sc}_{\sigma^np} (q_n)\quad \text{and} \quad d_M(q_n,q'_n)\leq \hat{\lambda}^{c}_n(p)^{-1}\cdot d_M(q_0,q'_0).$$
            \item \label{Prop_iii_b} Suppose that $\sigma^{-j}q\in \mathcal{W}^{scu}(\sigma^{-j} p,r_1)$ for $0\leq j\leq n-1$.\par
            If $q'_0\in \widehat{\mathcal{W}}^u_{p}(q_0,r_1)$, then 
            $$q'_{-n}\in \widehat{\mathcal{W}}^u_{\sigma^{-n}p} (q_{-n},r_1)\quad \text{and} \quad d_M(q_{-n},q'_{-n})\leq \lambda^{u}_{-n}(p)^{-1} \cdot d_M(q_0,q'_0).$$\par
            If $q'_{-j} \in \widehat{\mathcal{W}}^{cu}_{\sigma^{-j}p}(q_{-j},r_1)$ for $0\leq j\leq n-1$, then 
            $$q'_{-n}\in \widehat{\mathcal{W}}^{cu}_{\sigma^{-n}p} (q_{-n})\quad \text{and} \quad d_M(q_{-n},q'_{-n})\leq {\lambda}^{c}_{-n}(p)\cdot d_M(q_0,q'_0).$$
        \end{enumerate}
        \item \textnormal{Coherence.} 
        $\widehat{\mathcal{W}}^s_p$ and $\widehat{\mathcal{W}}^c_p$ subfoliate $\widehat{\mathcal{W}}^{sc}_p$, and $\widehat{\mathcal{W}}^c_p$ and $\widehat{\mathcal{W}}^u_p$ subfoliate $\widehat{\mathcal{W}}^{cu}_p$.
        \item \label{Prop_v: Uniqueness} \textnormal{Uniqueness.}
        $\widehat{\mathcal{W}}^s_p(p_0)= \pi (\mathcal{W}^s(p,r))$,
        and $\widehat{\mathcal{W}}^u_p(p_0)= \pi (\mathcal{W}^u(p,r))$.
        \item \textnormal{Regularity.} If $f$ is $C^{1+\alpha}$, then the foliations $\widehat{\mathcal{W}}_p^{s}$, $\widehat{\mathcal{W}}_p^{c}$, $\widehat{\mathcal{W}}_p^{u}$, $\widehat{\mathcal{W}}_p^{sc}$, and $\widehat{\mathcal{W}}_p^{cu}$ and their tangent distributions are uniformly Hölder continuous.
        \item \label{Lemma_vii:Regularity_Strong_Foliation} \textnormal{Regularity of the strong foliation inside weak leaves.} If $f$ is $C^2$ and center bunched, then each leaf of $\widehat{\mathcal{W}}^{sc}_{p}$ is $C^1$ foliated by leaves of the foliation $\widehat{\mathcal{W}}^{s}_{p}$, and each leaf of $\widehat{\mathcal{W}}^{cu}_{p}$ is $C^1$ foliated by leaves of the foliation $\widehat{\mathcal{W}}^{u}_{p}$. If $f$ is $C^{1+\alpha}$ and strongly center bunched, then the same conclusion holds.
        \item \textnormal{Projection.} \label{Lemma_viii:Endos} If $p, q \in M_f$ are such that $\pi(p) = \pi(q),$
            then $\widehat{\mathcal{W}}^{s}_{p}(p_0) = \widehat{\mathcal{W}}^{s}_{q}(q_0)$

            and $\widehat{\mathcal{W}}^{sc}_{p}(p_0) = \widehat{\mathcal{W}}^{sc}_{q}(q_0).$
    \end{enumerate}
\end{prop}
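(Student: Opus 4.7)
The plan is to adapt, orbit by orbit, the fake-foliation construction of \cite{BW10} (itself based on \cite{HSP}) to the endomorphism setting. Fix $p = \{p_n\}_{n \in \bbZ} \in M_f$. For each $n \in \bbZ$, use the exponential chart $\exp_{p_n} \colon T_{p_n}M \to M$ to lift a small neighborhood of $p_n$ together with the branch of $f$ sending $p_n$ to $p_{n+1}$ to a map $\tilde f_n \colon T_{p_n}M \to T_{p_{n+1}}M$; this lift exists near the origin because $f$ is a local diffeomorphism and the injectivity radius is bounded below. Using a bump function, modify $\tilde f_n$ so that it coincides with the linear map $D_{p_n}f$ outside a ball of radius $\rho$ and with the true exponential lift on a much smaller ball. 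The resulting bi-infinite sequence $\{\tilde f_n\}_{n \in \bbZ}$ is a globally defined, uniformly $C^1$-small perturbation of the hyperbolic linear cocycle $\{D_{p_n}f\}$, whose splitting at the zero section extends $\pi_*E^\beta(\sigma^n p)$ for $\beta \in \{u,s,c,cu,sc\}$.

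Now apply the $C^1$ invariant-section / graph-transform theorem of \cite{HSP} to $\{\tilde f_n\}$ to obtain $\tilde f_n$-invariant foliations of each $T_{p_n}M$ tangent to narrow cones around the extended $\beta$-distribution; pushing forward by $\exp_{p_n}$ and restricting to $B(p_n,r)$ gives the desired foliations $\widehat{\mathcal{W}}^\beta_{\sigma^n p}$. Properties (i), (ii), (iv), (vi) are then standard: cone control in (i) comes from choosing the bump support small; local invariance (ii) comes from the fact that $\tilde f_n$ equals the honest lift of $f$ on scale $r_1$; coherence (iv) is inherited from the nesting of the invariant sub-distributions; H\"older regularity (vi) from the usual normally hyperbolic dynamics estimates. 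Property (iii) is obtained by iterating the local invariance statement for as long as the forward (resp.\ backward) orbit remains in the radius-$r_1$ ball where $\tilde f_n$ may be replaced by the true lift of $f$, and then reading off the products $\lambda^s_n(p)$, $\hat\lambda^c_n(p)^{-1}$, $\lambda^u_{-n}(p)^{-1}$, $\lambda^c_{-n}(p)$ from the pointwise bounds in the definition of partial hyperbolicity. Uniqueness (v) follows because $\pi(\mathcal{W}^s(p,r))$ and $\pi(\mathcal{W}^u(p,r))$ are themselves $\tilde f_n$-invariant graphs of the right dimension, tangent to the correct bundle at $p_0$, and so must coincide with the fake leaves through $p_0$. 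The projection property (viii), which is new to the non-invertible setting, is immediate: if $\pi(p)=\pi(q)$ then $p_n=q_n$ for every $n\ge 0$, and the construction of $\widehat{\mathcal{W}}^s_p(p_0)$ and $\widehat{\mathcal{W}}^{sc}_p(p_0)$ uses only the forward orbit $\{p_n\}_{n\ge 0}$ together with the forward-only bundles $E^s$ and $E^{sc}$, so it produces leaves identical to those at $q$.

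The main technical obstacle is property (vii), the $C^1$-smoothness of the strong subfoliation inside each weak leaf under (strong) center bunching. This is the hardest part of \cite{BW10}: one applies a further $C^1$-section theorem to the graph transform operator acting on candidate tangent distributions to the strong foliation inside a weak leaf, and the center-bunching inequalities $\lambda^s<\lambda^c\cdot\hat\lambda^c$ and $\lambda^u<\lambda^c\cdot\hat\lambda^c$ are exactly what is needed to dominate the base expansion inside the weak leaf by the fibre contraction of the operator. These inequalities have the same form in our cocycle over $\sigma\colon M_f\to M_f$ as in the invertible case, so provided the norms and H\"older constants of the extended maps $\tilde f_n$ are bounded uniformly along orbits in $M_f$ --- which holds by compactness of $M_f$ and continuity of the cocycles $\lambda^s,\lambda^c,\hat\lambda^c,\lambda^u$ --- the Burns--Wilkinson proof of the corresponding regularity statement transfers essentially verbatim.
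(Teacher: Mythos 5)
Your proposal is correct and follows essentially the same approach as the paper: both define, via a bump-function modification of the exponential lift of $f$ around an orbit (exactly BW10's Step~1 construction, which never uses invertibility of $f$), a cocycle of $C^k$ diffeomorphisms of tangent spaces over $M_f$, and then feed this into the BW10/HPS graph-transform machinery to produce the fake foliations, with property~(viii) falling out because the stable and center-stable graph transforms use only the forward half of the orbit. The paper phrases this slightly more compactly by building a single bundle $E$ over $(M_f,\text{discrete})$ and lifting BW10's global map $F_r\colon TM\to TM$ to $\hat F_r\colon E\to E$, but this is the same idea as your orbit-by-orbit sequence $\{\tilde f_n\}$.
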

Properties \ref{Lemma_i:Almost tangency to invariant distributions} to \ref{Lemma_vii:Regularity_Strong_Foliation} are analogous to those of the fake foliations built by Burns and Wilkinson in the diffeomorphism case.
Property \ref{Lemma_viii:Endos}
is only relevant in the endomorphism setting.
\begin{proof}[Proof of Proposition \ref{prop:fakefoln}]
    Similar to the construction of $T M_f$,
    we construct a vector bundle $E$ over $M_f$
    by setting each fiber $E_p$ for $p \in M_f$ equal to
    the tangent space $T_{\pi(p)} M.$
    For this proof, however, we equip $M_f$ with the discrete topology,
    so $E$ is a disjoint union of fibers.
    Note that $E$ has multiple copies
    of each tangent space $T_{p_0} M,$ one copy for every orbit
    passing through $p_0.$

    Choose a constant $r_0 > 0$ and define a map
    $F_r : TM \to TM$
    for $r \in (0,r_0]$
    exactly as in Step 1 of the proof of
    \cite[Prop 3.1]{BW10}.
    Note that the definition of $F_r$ there does not
    depend on $f$ being invertible.
    If $f$ is locally a $C^k$ diffeomorphism, this construction
    will produce a map $F_r$ such that each
    $F_r(p_0, \cdot) \colon T_{p_0} M \to T_{f(p_0)} M$
    is a $C^k$ diffeomorphism.
    Then define a map $\hat F_r \colon E \to E$ by
    $\hat F_r(p, v) = F_r(\pi(p), v)$
    where here we use that $E_p$ is identified with $T_{\pi(p)}M.$
    Using $\hat F_r$ in place of $F_r$ in the remainder
    of the proof in \cite[Prop 3.1]{BW10} yields the desired results.
    In particular, all of the assertions of uniformity of estimates
    for $F_r$ given there hold for $\hat F_r$ as well.\par 
Property \ref{Lemma_viii:Endos} follows because the graph transform argument used to construct
the stable and center-stable fake foliations only relies on the forward orbit
and here $p_n$ and $q_n$ agree for all $n \ge 0.$   
\end{proof}

It will be convenient in later sections to assume that $r \gg 1$
in Proposition \ref{prop:fakefoln} and similar to the proof in \cite{BW10},
so we assume that the rescaling of the
Riemannian metric done at the start of Section \ref{Sec: Topological structure of the space of orbits}
was also chosen to ensure that $r$ is much greater than one.
Note that we still assume that $\rcov = 1$ holds.

\begin{notation}
    As noted in Section \ref{Sec: Topological structure of the space of orbits},
    for any point $p \in M_f$,
    the projection
    $\pi$ restricts to a homeomorphism on the ball
    $\mathcal{W}^{scu}(p, R)$ inside of the $scu$-sheet through $p$.
    Therefore, if $x \in \mathcal{W}^{scu}(p, r)$,
    we write $\widehat{\mathcal{W}}^{\alpha}_p(x, r)$ to denote
    $\pi^{-1} (\widehat{\mathcal{W}}^{\alpha}_p( \pi(x), r)$.
    Later on, we will restrict our focus to one specific orbit $p$
    and therefore drop the subscript and just write $\widehat{\mathcal{W}}^{\alpha}(x, r)$.
\end{notation}
\section{Measures in the space of orbits}

This section defines several measures on the space of orbits. 
However, we first introduce definitions and results that are general in nature and apply to any measure.

Let $m$ be a measure on a space $X$.
Recall from the introduction that for $m$-measurable sets $A$ and $B$, with $m(B)>0$ we define the \textit{density of $A$ in $B$} as $$m(A:B):=\frac{m(A\cap B)}{m(B)}.$$ \par 
A sequence $Y_n$ of measurable sets is \textit{regular} if there exists $C>0$ and $K\geq 1$ such that for all $n\geq 0$ we have $m(Y_{n+k})\geq C m(Y_n)$. \par 
Two sequences of sets $\{Y_n\}$ and $\{Z_n\}$ are \textit{internested} if there exists a $k\geq 1$ such that for all $n\geq k$ we have $Y_{n+k}\subset Z_{n}\subset Y_{n-k}$.\par 

\begin{lema}\label{comparablesequences}
Let $Y_n$ and $Z_n$ be internested sequences of measurable sets, with $Y_n$ regular. Then $Z_n$ is also regular. If the sets $Y_n$ have positive measure, then so do the $Z_n $, and, for any measurable $A$,
$$\lim_{n\to \infty} m (A:Y_n)  =1 \iff  \lim_{n\to \infty} m (A: Z_n)  =1. $$
\end{lema}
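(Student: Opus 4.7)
The plan is to exploit the internesting sandwich $Y_{n+k} \subset Z_n \subset Y_{n-k}$ to transfer quantitative measure estimates from $Y_n$ to $Z_n$. A key preliminary observation is a symmetry: substituting $n\pm k$ in the sandwich yields $Z_{n+k} \subset Y_n \subset Z_{n-k}$, so the internesting relation is in fact symmetric in $Y$ and $Z$. Once I have established one direction of each claim, the reverse direction follows by swapping the roles of $Y$ and $Z$ (using the regularity of $Z_n$ proved earlier in the argument).

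For regularity of $Z_n$, I will choose a constant $K'$ adapted to the regularity constant $K$ of $Y_n$ and the internesting constant $k$. From the sandwich I have $m(Z_{n+K'}) \geq m(Y_{n+K'+k})$ and $m(Z_n) \leq m(Y_{n-k})$, so it suffices to find $K'$ with $m(Y_{n+K'+k}) \geq C' m(Y_{n-k})$ uniformly in $n$ for some $C'>0$. Iterating the hypothesis $m(Y_{n+K}) \geq C m(Y_n)$ a total of $j$ times gives $m(Y_{n+jK}) \geq C^j m(Y_n)$; then choosing $j$ large enough that $jK - 2k \geq 1$ and setting $K' = jK - 2k$ delivers the bound with $C' = C^j$. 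Positive measure of $Z_n$ is immediate, since $m(Z_n) \geq m(Y_{n+k}) > 0$.

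For the density equivalence, suppose $m(A:Y_n) \to 1$. Since $Z_n \subset Y_{n-k}$,
\[
    m(Z_n \setminus A) \leq m(Y_{n-k} \setminus A) = m(Y_{n-k})\bigl(1 - m(A:Y_{n-k})\bigr),
\]
and since $Y_{n+k} \subset Z_n$, $m(Z_n) \geq m(Y_{n+k})$. Combining these,
\[
    1 - m(A:Z_n) \ \leq \ \frac{m(Y_{n-k})}{m(Y_{n+k})} \bigl(1 - m(A:Y_{n-k})\bigr),
\]
and iterated regularity of $Y_n$ will control the ratio $m(Y_{n-k})/m(Y_{n+k})$ by a constant independent of $n$. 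Hence the right side tends to zero and $m(A:Z_n) \to 1$. The reverse implication is then immediate from the symmetric internesting together with the already-established regularity of $Z_n$.

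The main technical obstacle is precisely the ratio bound $m(Y_{n-k})/m(Y_{n+k}) \leq \text{const}$: iterating the regularity inequality only directly yields bounds at indices of the form $n-k+jK$, not at $n+k$ itself. In the intended applications the sequence $Y_n$ will be nested ($Y_{n+1} \subset Y_n$), so that $m(Y_{n+k}) \geq m(Y_{n-k+jK})$ whenever $jK \geq 2k$, which closes the argument. Once this ratio is controlled the rest of the proof is routine book-keeping with the sandwich inclusions.
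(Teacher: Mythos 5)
The paper gives no proof of this lemma, remarking only that it follows from the definitions, so there is nothing detailed to compare against; your argument carries out the expected sandwich bookkeeping and is essentially sound. The ``main technical obstacle'' you flag --- bounding the ratio $m(Y_{n-k})/m(Y_{n+k})$ --- is a genuine issue, but it is worth noting that it arises only because this paper's definition of regularity permits a step $K\geq 1$. In Burns--Wilkinson's original formulation regularity is a step-one condition, $m(Y_{n+1})\geq C\,m(Y_n)$, and then iterating $2k$ times gives $m(Y_{n+k})\geq C^{2k}\,m(Y_{n-k})$ outright, with no need for nestedness. Most likely the ``$K$'' in the definition here is a transcription artifact; your fallback to nestedness $Y_{n+1}\subset Y_n$ works in every application in the paper (all the julienne sequences are nested), but strictly speaking it imports a hypothesis the lemma as stated does not supply, and you are right to call that out.

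There is also one imprecision in the reverse implication. You propose to swap $Y$ and $Z$ and reuse the same estimate via the regularity of $Z_n$, but your ratio argument would then need $Z_n$ to be nested, which neither the hypotheses nor anything you prove gives. The clean repair stays with $Y$: the symmetric sandwich $Z_{n+k}\subset Y_n\subset Z_{n-k}$ yields
\[
1 - m(A:Y_n)\ \leq\ \frac{m(Z_{n-k})}{m(Z_{n+k})}\bigl(1 - m(A:Z_{n-k})\bigr),
\]
and then $m(Z_{n-k})\leq m(Y_{n-2k})$ and $m(Z_{n+k})\geq m(Y_{n+2k})$ bound the ratio by $m(Y_{n-2k})/m(Y_{n+2k})$, which your forward argument already controls (now with a gap of $4k$ rather than $2k$). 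With that adjustment, and under the nestedness (or step-one) convention, your proof is complete.
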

The lemma is a consequence of the definitions of internested sequences and regular sequences.

Recall that throughout this paper that $\mu$ represents the measure associated
to a volume form on $M$ with constant Jacobian.
A point $x\in M$ is a \textit{Lebesgue density point} of a measurable set $A$ if
$$\lim_{r\to 0}\mu(A:B(x,r))=1.$$\par
\begin{theo}[Lebesgue Density Theorem]
\label{Lebesgue Density Theorem}
    If $A$ is a Lebesgue measurable set, then almost every $p\in A$ is a Lebesgue density point of $A$.
\end{theo}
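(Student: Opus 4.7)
The plan is to deduce this from the classical Lebesgue differentiation theorem on $\bbR^n$ and transfer it to $M$ through local coordinate charts. Since $M$ is a compact Riemannian manifold, cover it by finitely many bi-Lipschitz charts $\varphi_i \colon U_i \to V_i \subof \bbR^n$. On each chart, $\mu$ pulls back to a measure with bounded, positive Radon--Nikodym derivative with respect to Lebesgue measure $\lam$, and bi-Lipschitz maps distort balls by at most a bounded factor. Hence the property of being a Lebesgue density point is preserved between $(M, \mu)$ and $(\bbR^n, \lam)$, and it suffices to prove the statement for measurable $A \subof \bbR^n$.

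For the Euclidean case, the standard approach uses the Hardy--Littlewood maximal operator $Mg(x) = \sup_{r > 0} \lam(B(x,r))^{-1} \int_{B(x,r)} |g| \, d\lam$. The key estimate is the weak $(1,1)$ inequality
\[
    \lam\bigl(\{x : Mg(x) > \al\}\bigr) \le C_n \al^{-1} \|g\|_1,
\]
established through the Vitali covering lemma: from any cover of a measurable set by open balls one may extract a disjoint subcollection whose $3$-fold dilates still cover.

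To conclude, approximate $\mathbf{1}_A$ in $L^1$ by continuous compactly supported functions $g_\ep$ with $\|\mathbf{1}_A - g_\ep\|_1 < \ep$. For each continuous $g$, uniform continuity gives the pointwise limit $\lam(B(x,r))^{-1} \int_{B(x,r)} g \, d\lam \to g(x)$ at every $x$. Writing $\mathbf{1}_A = g_\ep + (\mathbf{1}_A - g_\ep)$ and applying the maximal inequality to the error term shows that, for every $\al > 0$, the set of $x$ where $\limsup_{r \searrow 0} |\lam(A : B(x,r)) - \mathbf{1}_A(x)| > \al$ has Lebesgue measure at most $C_n \ep / \al$. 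Sending $\ep \to 0$ and then $\al \to 0$ shows this exceptional set is null, so for almost every $x$, the limit $\lim_{r \searrow 0} \lam(A : B(x,r))$ exists and equals $\mathbf{1}_A(x)$. In particular, almost every $p \in A$ is a Lebesgue density point of $A$.

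The only real technical ingredient is the weak $(1,1)$ maximal inequality, which is entirely classical. In our setting the only thing to check is that the doubling and covering properties of metric balls for $(M, \mu)$ are comparable, via the finitely many bi-Lipschitz charts, to those of Lebesgue balls in $\bbR^n$; this is immediate from the compactness of $M$ and the continuity of the Riemannian volume density.
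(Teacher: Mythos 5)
The paper does not actually prove this theorem: it simply cites Chapter~6 of Pugh's \emph{Real Mathematical Analysis} and moves on, treating the Lebesgue Density Theorem as classical background. Your proposal, by contrast, supplies a genuine proof of the cited fact. The argument you give (Hardy--Littlewood maximal function, weak $(1,1)$ inequality via Vitali covering, approximation of $\mathbf{1}_A$ by continuous functions in $L^1$) is the standard one, and it is correct in substance.

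One point worth tightening if you actually wanted to spell out the details. Riemannian balls $B(x,r)\subof M$ are carried by a bi-Lipschitz chart to sets that are sandwiched between concentric Euclidean balls of radii $r/L$ and $Lr$, but they are not themselves Euclidean balls. So you cannot directly quote the Euclidean differentiation theorem, which is stated for Euclidean balls. You need either (a) the ``regular family'' version of the differentiation theorem, which allows shrinking sets $E_r$ with $\lam(E_r)$ uniformly comparable to $\lam(B(x,\operatorname{diam} E_r))$ --- a hypothesis your sandwiched sets satisfy --- or (b) to run the Vitali/maximal-function argument directly on the metric measure space $(M,d,\mu)$, which works because $\mu$ is doubling (smooth positive density on a compact manifold) and the $5r$-covering lemma holds in any metric space. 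Either repair is routine, and (b) actually makes the chart reduction unnecessary. As written, the step ``hence the property of being a Lebesgue density point is preserved between $(M,\mu)$ and $(\bbR^n,\lam)$'' is asserted a little too quickly, but it is a small and standard gap, not a conceptual one.
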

A proof of this result can be found in Chapter 6 of \cite{pugh2013real}.

\medskip

We now define measures on the space of orbits, the $scu$-sheets, the leaves of both the true and fake foliations, and the $\mathcal{W}^r$ sets.

\begin{prop}[\textit{scu}-measure]
\label{Prop:scu measure}
    There is a Borel measure $\nu^{scu}$ defined on the sheets of the lamination $\mathcal{W}^{scu}$ such that, 
    for all $x\in M_f$ if $A\subset  \mathcal{W}^{scu}\loc (x)$ is a $\nu^{scu}$-measurable set then $\nu^{scu}(A)=\mu(\pi(A))$,
    and for all $\nu^{scu}$-measurable $A$ we have 
\begin{equation}
\label{eq:scu-expansion}
  \nu^{scu}(\sigma(A))=\deg(f)\,\nu^{scu}(A).  
\end{equation}
\end{prop}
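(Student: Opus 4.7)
The plan is to construct $\nu^{scu}$ by pulling back $\mu$ via $\pi$ on local $scu$-sheets, patching the local pieces into a Borel measure on each global $scu$-sheet, and then verifying the $\sigma$-covariance directly from $\pi \circ \sigma = f \circ \pi$ together with the constant Jacobian hypothesis.

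First, for each $x \in M_f$, the choice of $R$ in Section \ref{Sec: Topological structure of the space of orbits} ensures that $\pi$ restricts to a homeomorphism from $\mathcal{W}^{scu}\loc(x) = \mathcal{W}^{scu}(x, R)$ onto an open subset of $M$, so I define a local measure on Borel subsets $A \subset \mathcal{W}^{scu}\loc(x)$ by $A \mapsto \mu(\pi(A))$. If $y \in \mathcal{W}^{scu}\loc(x)$, then $\pi$ is still injective on the open overlap $\mathcal{W}^{scu}\loc(x) \cap \mathcal{W}^{scu}\loc(y)$, so the two local definitions coincide there. Each global $scu$-sheet is a connected immersed submanifold of fixed dimension and hence second countable, so it admits a countable cover by local $scu$-sheets and the local measures patch unambiguously into a single Borel measure $\nu^{scu}$ on the whole sheet. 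The first stated property is then immediate from the construction.

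For the expansion relation, note that $\sigma$ is a homeomorphism of $M_f$ and therefore maps the global $scu$-sheet through $x$ homeomorphically onto the one through $\sigma(x)$. Next, the constant Jacobian hypothesis together with $f_*\mu = \mu$ forces the Jacobian of $f$ to equal $\deg(f)$: choosing $U \subset M$ small enough that $f^{-1}(U)$ splits into $\deg(f)$ diffeomorphic copies, the change of variables formula gives $\mu(U) = \mu(f^{-1}(U)) = \deg(f) \cdot \mu(U)/J_f$, so $J_f = \deg(f)$. Now partition a general $\nu^{scu}$-measurable set $A$ into countably many Borel pieces $A_i$ each contained in a local $scu$-sheet small enough that $\pi(A_i)$ sits inside an open ball on which $f$ is injective. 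For each such piece,
\[
\nu^{scu}(\sigma(A_i)) = \mu(\pi(\sigma(A_i))) = \mu(f(\pi(A_i))) = \deg(f)\cdot \mu(\pi(A_i)) = \deg(f)\cdot \nu^{scu}(A_i),
\]
and summing over $i$ yields \eqref{eq:scu-expansion} for $A$.

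The main obstacle is administrative rather than conceptual: one must check that each global $scu$-sheet carries a countable atlas of local sheets whose overlaps agree measurably, and that the chosen partition into small pieces $A_i$ above is genuinely Borel. Once these bookkeeping issues are settled, the first property is definitional and the covariance reduces to the identity $\pi \circ \sigma = f \circ \pi$ and the computation $J_f = \deg(f)$. The only subtlety worth an explicit verification is that $\sigma$ preserves the partition into global $scu$-sheets, which follows because $\sigma$ is a homeomorphism of $M_f$ and therefore sends connected components to connected components.
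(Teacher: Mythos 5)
Your proposal is correct and follows the same route as the paper: pull $\mu$ back through the local diffeomorphism $\pi|_{\mathcal{W}^{scu}(x)}$ to define $\nu^{scu}$, and deduce the expansion relation from $\pi\circ\sigma=f\circ\pi$ together with the constant Jacobian. You add two things the paper leaves implicit — the explicit overlap/patching check on the global sheet and the derivation $J_f=\deg(f)$ from invariance plus constancy (the paper simply assumes this identity in the introduction) — but the core construction is identical.
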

\begin{proof}
    Since $\pi|_{\mathcal{W}^{scu}(x)}$ is a local diffeomorphism, we can pull back the volume form on $M$ to $\mathcal{W}^{scu}(x)$ using $\pi$. This defines the volume form on each $\mathcal{W}^{scu}(x)$.
    Equation \ref{eq:scu-expansion} follows from $f$ having constant Jacobian equal to $\deg{f} $.
\end{proof}
The following proposition states the properties of the measure \(\nu^r\) on \(M_f\).
\begin{prop}[\textit{r}-measure]
\label{Prop:r-measure}
There is a unique measure \(\nu^r\) on \(M_f\) satisfying:
\begin{enumerate}
    \item \(\nu^r(\mathcal{W}^r\loc(x)) = 1\) for all \(x \in M_f\), and
    \item \(\displaystyle\nu^r(\sig(A)) = \frac{1}{\deg(f)} \nu^r(A)\) for all \(\nu^r\)-measurable \(A \subset M_f\).
\end{enumerate}
\end{prop}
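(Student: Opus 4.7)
The plan is to construct $\nu^r$ fiberwise. For each $p \in M$, the fiber $\pi^{-1}(p)$ consists of orbits $y = \{y_n\}$ with $y_0 = p$; since the forward orbit is determined by $p$, such a $y$ is specified entirely by its backward trajectory $(y_{-1}, y_{-2}, \ldots)$ satisfying $f(y_{-i}) = y_{-i+1}$. This identifies $\pi^{-1}(p)$ with the inverse limit $\varprojlim_{k} f^{-k}(p)$, which by Proposition \ref{canon} is a Cantor set. Its Borel $\sigma$-algebra is generated by the cylinder sets
\[
    C(p; p_{-1}, \ldots, p_{-k}) := \{\, y \in \pi^{-1}(p) : y_{-i} = p_{-i} \text{ for } 1 \le i \le k \,\},
\]
indexed by finite backward trajectories $(p_{-1}, \ldots, p_{-k})$ of $p$.

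On this generating algebra I would define the pre-measure $\nu^r(C(p; p_{-1}, \ldots, p_{-k})) := \deg(f)^{-k}$. Consistency is immediate: each depth-$k$ cylinder is a disjoint union of the $\deg(f)$ depth-$(k+1)$ cylinders obtained by specifying a choice of $p_{-(k+1)} \in f^{-1}(p_{-k})$, and $\deg(f) \cdot \deg(f)^{-(k+1)} = \deg(f)^{-k}$. Carathéodory extension then produces a Borel probability measure on each fiber, and taking the union over fibers yields $\nu^r$ on $M_f$. (It is not globally $\sigma$-finite, but properties (1) and (2) only probe it fiberwise, so this is harmless.) Property (1) holds by construction.

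For property (2), I would verify that $\sigma$ maps $\pi^{-1}(p)$ homeomorphically onto the depth-$1$ cylinder $\{z \in \pi^{-1}(f(p)) : z_{-1} = p\}$ in $\pi^{-1}(f(p))$ and more generally sends $C(p; p_{-1}, \ldots, p_{-k})$ bijectively onto $C(f(p); p, p_{-1}, \ldots, p_{-k})$, whose depth has increased by one and whose measure is therefore multiplied by $\deg(f)^{-1}$. Because $\sigma$ and $\sigma^{-1}$ are continuous, the identity $\nu^r(\sigma(A)) = \deg(f)^{-1}\nu^r(A)$ on cylinders extends to all Borel $A$ by a standard monotone class argument.

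For uniqueness, suppose $\tilde\nu^r$ satisfies (1) and (2). Applying (2) to $A = \pi^{-1}(q)$ for any $q \in f^{-1}(p)$, together with $\tilde\nu^r(A) = 1$ from (1), forces every depth-$1$ cylinder in $\pi^{-1}(p)$ to have $\tilde\nu^r$-measure $\deg(f)^{-1}$; iterating $\sigma$ and reusing (2) then forces every depth-$k$ cylinder to have measure $\deg(f)^{-k}$, so $\tilde\nu^r$ and $\nu^r$ coincide on the generating cylinder algebra of each fiber and hence on all Borel subsets. The only step requiring genuine care is verifying that the cylinder pre-measure is well-defined and finitely additive on the algebra generated by cylinders before invoking Carathéodory; once the inverse-limit description is in hand, both (2) and uniqueness reduce to bookkeeping about how $\sigma$ acts on backward-orbit coordinates.
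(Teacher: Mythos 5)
Your construction is essentially the paper's: both build a pre-measure on a cylinder algebra (your $C(p;p_{-1},\dots,p_{-k})$ are exactly the paper's basic sets $\sigma^n(\mathcal{W}^r_{\mathrm{loc}}(x))$ for $n\ge 0$) assigning mass $\deg(f)^{-k}$, invoke Carathéodory, and deduce uniqueness from the forced values on cylinders. Your fiberwise organization versus the paper's global formulation is cosmetic, and your uniqueness argument is if anything a bit more explicit than the paper's in showing that (1) and (2) force the cylinder values.

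One caveat worth flagging: you locate the ``genuine care'' in verifying that the pre-measure is \emph{well-defined and finitely additive}, but that part really is immediate bookkeeping. The step Carathéodory actually demands --- and the one the paper devotes its proof to --- is \emph{countable additivity on the algebra}: if a cylinder is written as a countable disjoint union of cylinders, the masses must sum correctly. The paper handles this by putting the discrete topology on $M$, observing that the basic sets become simultaneously compact and open in the resulting fiber topology, so any countable disjoint cover of a cylinder by cylinders is necessarily finite, reducing countable additivity to finite additivity. Your inverse-limit description $\pi^{-1}(p)\cong\varprojlim_k f^{-k}(p)$ gives you exactly this structure (cylinders are clopen in a compact metrizable space), so the fix is one sentence, but it should be said: as written you never verify the hypothesis that licenses the Carathéodory extension.
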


A full proof is provided in Appendix \ref{Appendix proofs}.
The idea of the proof is to observe that
\[
    \nu^r( \sig^n(\Wr_{loc}(x))) = [\deg(f)]^{-n}
\]
must hold for all points $x \in M_f$ and integers $n \in \bbZ.$
Using sets of this form to define a premeasure and applying the Carathéodory extension theorem,
we construct the measure.

Recall from Proposition \ref{canon} that if $x_0 \in U \subset M$ where $U$ is open and simply connected,
then $\pi^{-1}(U)$ is homeomorphic to $U \times \pi^{-1}(x_0)$
and we regard the two sets as equal.

\begin{prop}\label{defnu}
There exists a unique Borel probability measure $\nu$ on $M_f$ satisfying:
\begin{enumerate}
    \item $\nu$ is $\sigma$-invariant,
    \item $\mu$ is the pushforward measure $\pi_*\nu$, and
    \item
    if $x_0 \in U \subset M$ where $U$ is open and simply connected,
    then the restriction of $\nu$ to $\pi^{-1}(U)$ is equal
    to the restriction of $\nu^{scu} \times \nu^{r}$ to $U \times \pi^{-1}(x_0)$.
\end{enumerate}
\end{prop}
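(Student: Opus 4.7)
The plan is to define $\nu$ locally using condition (3) as the definition, verify compatibility across overlapping charts, then check that the globally assembled measure satisfies (1) and (2). Uniqueness will follow because the product sets in (3) pin $\nu$ down on a generating $\pi$-system.

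\textbf{Local construction and compatibility.} Cover $M$ by finitely many open simply connected sets $U_1, \dots, U_N$ and pick a basepoint $z_i \in U_i$ in each. By Proposition \ref{canon}, we have canonical homeomorphisms $\pi^{-1}(U_i) \cong U_i \times \pi^{-1}(z_i)$, under which horizontal slices $U_i \times \{y\}$ are pieces of $scu$-sheets and vertical slices are fibers $\pi^{-1}(q)$. On each chart I would define $\nu_i \coloneqq \nu^{scu} \times \nu^r$, where $\nu^{scu}$ restricted to a horizontal slice equals the pullback by $\pi$ of $\mu|_{U_i}$ (Proposition \ref{Prop:scu measure}) and $\nu^r$ restricted to each vertical fiber is the probability measure from Proposition \ref{Prop:r-measure}. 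Consistency on the overlap $\pi^{-1}(U_i \cap U_j)$ holds because $\nu^{scu}$ is intrinsic to each $scu$-sheet and $\nu^r$ is globally defined on $M_f$; switching charts is just a relabeling of the product decomposition that respects both factorings. The $\nu_i$ then patch into a single Borel measure $\nu$ on $M_f$ satisfying (3) by construction.

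\textbf{Verification of (1) and (2).} For property (2), given Borel $B \subseteq U_i$ one computes
\[
    \nu(\pi^{-1}(B)) \;=\; \nu^{scu}(B) \cdot \nu^r(\pi^{-1}(z_i)) \;=\; \mu(B) \cdot 1,
\]
so $\pi_*\nu = \mu$ and in particular $\nu(M_f) = 1$. For property (1), take a small product set $A = B \times C \subseteq U_i \times \pi^{-1}(z_i)$ with $B$ chosen small enough that $f|_B$ is injective and $\sigma(A)$ lies inside a single chart $\pi^{-1}(U_j)$. Under the identifications, $\sigma$ sends horizontal slices to horizontal slices via $f$ and vertical slices to vertical slices via $\sigma$, so
\[
    \nu(\sigma(A)) \;=\; \nu^{scu}(f(B)) \cdot \nu^r(\sigma(C)) \;=\; \deg(f)\,\nu^{scu}(B) \cdot [\deg(f)]^{-1}\,\nu^r(C) \;=\; \nu(A).
\]
The factors of $\deg(f)$ coming from Propositions \ref{Prop:scu measure} and \ref{Prop:r-measure} cancel exactly, so $\sigma$ preserves $\nu$ on a generating collection of sets, hence globally.

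\textbf{Uniqueness and main difficulty.} Any measure $\nu'$ satisfying (3) agrees with $\nu$ on all sets of the form $B \times C$ inside a chart, which form a $\pi$-system generating the local Borel $\sigma$-algebra; a Dynkin $\pi$-$\lambda$ argument then forces $\nu' = \nu$ globally. The step I expect to be most delicate is the overlap compatibility: one has to confirm that the local product measure does not depend on the basepoint $z_i$ and that the horizontal coordinate in one chart matches the horizontal coordinate in another via the intrinsic $scu$-sheet structure, rather than the ad-hoc trivialization. Once that intrinsicality is established, the rest of the proof is just the bookkeeping cancellation $\deg(f) \cdot [\deg(f)]^{-1} = 1$ that makes the $r$-measure and $scu$-measure combine into a $\sigma$-invariant probability measure.
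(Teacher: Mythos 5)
Your construction matches the paper's: partition $M$ (and hence $M_f$) into finitely many charts $\pi^{-1}(U_i) \cong U_i \times \pi^{-1}(x_i)$, define $\nu$ as $\nu^{scu}\times\nu^r$ on each chart, check compatibility on overlaps (which the paper also acknowledges requires verification, using the intrinsic nature of $\nu^{scu}$ and $\nu^r$), and then patch. Your verification of $\sigma$-invariance via the cancellation $\deg(f)\cdot[\deg(f)]^{-1}=1$ is exactly the mechanism the paper is invoking when it says ``using Propositions \ref{Prop:scu measure} and \ref{Prop:r-measure}, one can verify''; you have just made it explicit. Where you genuinely diverge is in the uniqueness argument: you deduce uniqueness from property (3) alone via a $\pi$-$\lambda$ argument on product sets, which is elementary, self-contained, and entirely sufficient for the proposition as stated. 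The paper instead cites a standard result (\cite[Proposition I.3.1]{qian2009smooth}) that properties (1) and (2) already determine $\nu$ uniquely --- this is a strictly stronger claim, since it shows any $\sigma$-invariant lift of $\mu$ to $M_f$ must equal $\nu$ without assuming the local product structure in advance. That stronger form is what one typically wants when proving lifted measures have good ergodic properties, so while your route is correct and simpler, the paper's chosen reference buys a uniqueness statement that is reusable independently of condition (3).
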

\begin{proof}
    Any Borel set $A \subset M_f$ can written as a finite disjoint union
    $A = A_1 \sqcup \cdots \sqcup A_n$
    where each $A_i$ is a subset of
    $\pi^{-1}(U_i) = U_i \times \pi^{-1}(x_i)$
    for some $x_i \in U_i \subset M$ with $U_i$ open and simply connected.
    Define $\nu$ by
    \[
        \nu(A) = \sum (\nu^{scu} \times \nu^r) (A_i).
    \]
    Using Propositions \ref{Prop:scu measure} and \ref{Prop:r-measure},
    one can verify that $\nu$ is a well-defined measure on $M_f$
    which satisfies all three listed properties.
    It is a standard result of the study of endomorphisms that
    any measure which satisfies the first two of these properties
    is unique. See \cite[Proposition I.3.1]{qian2009smooth} for details.
\end{proof}

Consider the fake stable-center foliation \( \widehat{\mathcal{W}}^{sc}_p \).
Each of the fake stable-center leaves is an immersed submanifold within the sheet \( \mathcal{W}^{scu}(p) \). 
Then, each leaf has a well-defined volume induced by its Riemannian metric.
If a set \( A \) is contained within a single leaf of \( \widehat{\mathcal{W}}^{sc}_p \) and is measurable on that leaf, we denote by \( \widehat{\nu}^{sc}(A) \) the induced Riemannian volume of $A$ in that leaf. 
We call \( \widehat{\nu}^{sc}\) the \textit{fake stable-center measure}.

For $x \in \mathcal{W}^{scu}(p,r)$, consider the set $\widehat{\mathcal{W}}^{rsc}_p(x) \coloneqq \mathcal{W}^r\loc(\widehat{\mathcal{W}}^{sc}_p(x))$. 
Each point $q$ in $\widehat{\mathcal{W}}^{rsc}_p(x)$ is identified uniquely by the pair $(y,z)\in\mathcal{W}^r\loc(x)  \times \widehat{\mathcal{W}}^{sc}_p(x)$ where $y=\mathcal{W}^r\loc(x) \cap \mathcal{W}^{scu}\loc(q) $ and $z= \mathcal{W}^r\loc(q) \cap \widehat{\mathcal{W}}^{sc}_p(x)$. 
Thus, $\widehat{\mathcal{W}}^{rsc}_p(x)$ can be identified as the product $\mathcal{W}^r\loc(x)\times \widehat{\mathcal{W}}^{sc}_p(x)$.
Using this identification, we can define $\widehat{\nu}^{rsc}\coloneqq \nu^r\times \widehat{\nu}^{sc}$ on $\widehat{\mathcal{W}}^{rsc}_p(x)$.

We now consider transverse absolute continuity and
we repeat the definition given in \cite{BW10}.
For a foliation $\mathcal{F}$, the \textit{holonomy map} $h_\mathcal{F} \colon \tau_1 \to \tau_2$ is defined between two complete transversals $\tau_1$ and $\tau_2$ in a foliation box, mapping a point in $\tau_1$ to the intersection of its local leaf of $\mathcal{F}$ with $\tau_2$.
A foliation $\mathcal{F}$ with smooth leaves is \textit{transversely absolutely continuous with bounded Jacobians} if, for every $\alpha\in(0,\pi)$, there exists $C\geq 1$ such that, for every foliation box $U$ of diameter less than $R$, any complete transversals $\tau_1,\tau_2$ to $\mathcal{F}$ in $U$ of angle at least $\alpha$ with $\mathcal{F}$, and any $m_{\tau_1}$-measurable set $A$ contained in ${\tau_1}$, we have the inequality:
$$\frac{1}{C} m_{\tau_1}(A)\leq m_{\tau_2}(h_\mathcal{F}(A))\leq Cm_{\tau_1}(A). $$
The stable and unstable foliations of a partially hyperbolic diffeomorphism are transversely absolutely continuous with bounded Jacobians \cite[\S 3]{BP74}. 
This result can be adapted to our case to get the following result. 

\begin{prop}
    The unstable foliation inside of an $scu$-sheet
    is transversely absolutely continuous with bounded Jacobians.
    Moreover, the constant $C$ used in the definition
    is uniform over $M_f$ and independent of the choice
    of $scu$-sheet.
\end{prop}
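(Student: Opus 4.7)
My plan is to reduce the claim to the classical Brin–Pesin absolute continuity argument for unstable foliations and run it within a single $scu$-sheet of $M_f$.

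First, fix $p \in M_f$. By Lemma \ref{lemma:scumetric}, the projection $\pi$ restricts to an isometry from $\mathcal{W}^{scu}(p,R)$ onto its image in $M$, and by Proposition \ref{Prop:scu measure}, the measure $\nu^{scu}$ on the sheet agrees with the pullback of $\mu$ under $\pi$. Consequently, it suffices to establish transverse absolute continuity with bounded Jacobians for the pushed-forward foliation $\pi(\mathcal{W}^u)$ on the open subset $\pi(\mathcal{W}^{scu}(p,R)) \subset M$, equipped with the induced Riemannian measure on each transversal.

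Second, I would run the classical Brin–Pesin telescoping argument. Given two complete transversals $\tau_1, \tau_2$ of angle at least $\alpha$ with $\mathcal{W}^u$ in a foliation box of diameter less than $R$, and the unstable holonomy $h^u \colon \tau_1 \to \tau_2$, take $x \in \tau_1$ and $y = h^u(x) \in \tau_2$. Since $\sigma$ is invertible on $M_f$, the backward iterates $\sigma^{-n}(x)$ and $\sigma^{-n}(y)$ remain on a common unstable leaf and their distance shrinks exponentially in $n$, by the growth bounds of Proposition \ref{prop:fakefoln}; meanwhile $\sigma^{-n}$ maps the sheet $\mathcal{W}^{scu}(p)$ onto $\mathcal{W}^{scu}(\sigma^{-n}p)$, so the argument stays entirely within $scu$-sheets. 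Using the relation $h^u = \sigma^n \circ h^u_n \circ \sigma^{-n}$, where $h^u_n$ is the unstable holonomy between $\sigma^{-n}(\tau_1)$ and $\sigma^{-n}(\tau_2)$, the Jacobian of $h^u$ at $x$ factors as a ratio of Jacobians of $D\sigma^n$ along the iterated transversals times $\mathrm{Jac}(h^u_n)(\sigma^{-n}x)$.

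Third, I would bound each factor. The boundary term $\mathrm{Jac}(h^u_n)$ tends to $1$ as $n \to \infty$ because the iterated transversals are joined by unstable arcs of vanishing length, while remaining uniformly transverse to $\mathcal{W}^u$ thanks to center bunching and the fake center-stable foliations of Proposition \ref{prop:fakefoln}, whose angles with $\mathcal{W}^u$ are controlled by the partial hyperbolicity cone estimates. The ratio of $D\sigma^n$-Jacobians converges to an infinite telescoping product whose general term differs from $1$ by a quantity exponentially small in the index, by Hölder continuity of $E^u$ along unstable leaves, a standard consequence of partial hyperbolicity. This yields a uniform upper and lower bound for $\mathrm{Jac}(h^u)$ depending only on $\alpha$ and on the global partial hyperbolicity constants.

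The main obstacle is controlling the growth of $\sigma^{-n}(\tau_i)$ in the stable direction and ensuring that these iterated transversals remain transverse to $\mathcal{W}^u$ at a bounded angle; in the invertible setting this is standard, and here it is handled by precomposing $h^u$ with fake center-stable holonomies and invoking the center-bunching inequality. The uniformity of $C$ over $M_f$ then follows because all constants appearing in the estimates depend only on the globally-defined continuous functions $\lambda^s, \lambda^c, \hat\lambda^c, \lambda^u$ on the compact space $M_f$, and are thus independent of the choice of orbit $p$ or sheet $\mathcal{W}^{scu}(p)$.
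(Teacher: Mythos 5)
Your proposal is correct and follows the same route the paper has in mind: the paper itself gives no proof, merely citing the Brin--Pesin absolute continuity result and asserting that it ``can be adapted to our case,'' and your sketch is precisely that adaptation, exploiting the invertibility of $\sigma$ on $M_f$, the fact that $\sigma^{-n}$ carries $scu$-sheets to $scu$-sheets, and the identification of $(\mathcal{W}^{scu}(p,R), \nu^{scu})$ with an open Riemannian subset of $M$ via $\pi$. One minor caveat: your invocation of center bunching to control the angle of iterated transversals is superfluous here, since transverse absolute continuity with bounded Jacobians of $\mathcal{W}^u$ is a consequence of partial hyperbolicity and $C^{1+\alpha}$ regularity alone (the transversals can be tilted to lie near the $D\sigma$-invariant bundle $E^{cs}$, which backward iteration preserves), whereas center bunching enters the paper only for the finer $C^1$-regularity of the strong foliations inside weak leaves in Proposition \ref{prop:fakefoln}\ref{Lemma_vii:Regularity_Strong_Foliation}.
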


In particular, we are interested in the case where the transversals are fake stable-center manifolds. 
For a point $q\in M_f$, let $x,x'\in \mathcal{W}^u\loc(q)$ be such that
$\widehat{\mathcal{W}}^{sc}_q(x)$ and $\widehat{\mathcal{W}}^{sc}_q(x')$ are
defined.
Then for any $\widehat{\nu}^{sc}$-measurable set $A$ contained in $\widehat{\mathcal{W}}^{sc}_q(x)$, we have the inequality:
\[
    \frac{1}{C}\; \widehat{\nu}^{sc}(A)\leq \widehat{\nu}^{sc}
    (h^u_q(A))\leq C\; \widehat{\nu}^{sc} (A)
\]
where $h^u_q\colon \widehat{\mathcal{W}}^{sc}_q(x)\to\widehat{\mathcal{W}}^{sc}_q(x')$ is the unstable holonomy.
\section{Proving the main theorem}
\label{Sec: The main theorem}
In this section, we state several results which combine together to
give the proof of Theorem \ref{main}.
The proofs of these results are then given in the remaining sections of this
paper.


Recall that there is a stable foliation $\mathcal{W}^{s}$ on $M$ that is tangent to the stable bundle $E^{s} \subset TM$.
Although we do not have an unstable foliation in $M$, the space of orbits \( M_f \) admits an unstable foliation \( \mathcal{W}^{u} \), tangent to the unstable bundle \( E^{u} \subset TM_f \).
For any $x \in M_f$ in the space of orbits,
let $\Wu(x)$ denote entire unstable leaf through $x.$
It is an injectively immersed copy of $\bbR^n$ into $\mathcal{W}^{scu}(x)$
where $n = \dim(E^u).$
Projecting down to $M,$
the image of $\pi(\Wu(x))$ is a copy of $\bbR^n,$
now immersed in the manifold $M$ and passing through $x_0 = \pi(x),$
but this immersion in $M$ is not necessarily injective
and $\pi(\Wu(x))$ may pass through itself.
We call $\pi(\Wu(x))$ \emph{an entire unstable manifold} passing through $x_0.$
Note that there are multiple such manifolds,
one for every orbit passing through $x_0.$

In the base manifold $M$,
a subset \(A \subset M \) is \textit{\( s \)-saturated} if it is the union of entire stable leaves in \( M \).  
It is \textit{\( u \)-saturated} if, for every \( x_0 \in A \), it contains every entire unstable manifold through \( x_0 \) in \( M \).  
A subset of \( M \) is said to be \textit{bisaturated} if it is both \( s \)-saturated and \( u \)-saturated. 
In these terms accessibility is equivalent to: if a set is bisaturated then it must be empty or all of $M$.
Further, essential accessibility is equivalent to: if a measurable set is bisaturated then it must have either zero or full measure.

In the space of orbits \( M_f \), the definitions are analogous. A subset of \( M_f \) is called:
\textit{\( u \)-saturated} if it is the union of entire unstable leaves in \( M_f \),
\textit{\( s \)-saturated} if it is the union of entire stable leaves in \( M_f \), and
\textit{\( r \)-saturated} if it is the union of entire \( \mathcal{W}^r(x) \) sets in \( M_f \).  
A subset of \( M_f \) is \textit{\( rs \)-saturated} if it is both \( r \)-saturated and \( s \)-saturated.
For \( \alpha \in \{u, rs\} \), a $\nu$-measurable subset \( X \subset M_f \) is
\textit{essentially \( \alpha \)-saturated} if there exists
an \( \alpha \)-saturated set \( X^\alpha \) such that
\( \nu(X \triangle X^\alpha) = 0 \). We call $X^\alpha$ the \textit{$\alpha$-saturate} of $X$.

\begin{theo}[Bi-essential theorem]
\label{bi}
Let $f\colon M\to M$ be $C^2$, partially hyperbolic and center bunched.
Let $A^{rs}\subset M_f$ be a measurable set that is both $rs$-saturated and essentially $u$-saturated. 
Then the set of Lebesgue density points of $\pi(A^{rs})$ is bisaturated.
\end{theo}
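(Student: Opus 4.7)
Write $A_0 := \pi(A^{rs})$; since $A^{rs}$ is $r$-saturated we have $A^{rs} = \pi^{-1}(A_0)$, so the goal is to prove bisaturation of $X_0 := \{x_0 \in M : x_0 \text{ is a Lebesgue density point of } A_0\}$. Fix a $u$-saturated representative $A^u \subof M_f$ with $\nu(A^{rs} \triangle A^u) = 0$.

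\emph{Stable saturation.} Given $x_0, x_0' \in M$ lying on a common local stable leaf, pick a small, open, simply-connected disc $U \subof M$ containing both. By Proposition \ref{canon} and the product structure of $\nu$ from Proposition \ref{defnu}, we identify $\pi^{-1}(U) = U \times \pi^{-1}(x_0)$ with $\nu|_{\pi^{-1}(U)} = \mu \times \nu^r$, and $r$-saturation gives $A^{rs} \cap \pi^{-1}(U) = (A_0 \cap U) \times \pi^{-1}(x_0)$. Applying Fubini to the $\nu$-null set $A^{rs} \triangle A^u$ yields a $\nu^r$-generic fiber point $z \in \pi^{-1}(x_0)$ for which $A_0 \cap U$ agrees $\mu$-a.e.\ with $A^u \cap (U \times \{z\})$. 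On this ``good'' slice, which sits inside a single $scu$-sheet, we have a set that is simultaneously $s$-saturated (as a trace of $A^{rs}$) and equal mod zero to a genuinely $u$-saturated set in $M_f$. The stable-saturation half of the Burns--Wilkinson argument --- $cu$-juliennes $\Jcu_n(x_0)$, the ball-vs-julienne density equivalence, and internesting under $s$-holonomy --- then runs essentially unchanged inside this slice and delivers $x_0 \in X_0 \iff x_0' \in X_0$.

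\emph{Unstable saturation.} Suppose now $x_0' \in \pi(\Wu(x))$ for some orbit $x \in \pi^{-1}(x_0)$, and choose $x' \in \Wu(x)$ with $\pi(x') = x_0'$. Unlike the stable case, we are forced to work in the particular $scu$-sheet carrying both $x$ and $x'$, which may be ``bad'' in the Fubini sense, so the one-slice argument breaks down. The remedy is to replace single slices with $rsc$-juliennes $\Jrsc_n(x) \subof \widehat{\mathcal{W}}^{rsc}_x(x)$, equipped with the product measure $\widehat{\nu}^{rsc} = \nu^r \times \widehat{\nu}^{sc}$, and to prove the two equivalences
\[
    \lim_{r \searrow 0} \mu(A_0 : B(x_0, r)) = 1
    \iff
    \lim_{n \to \infty} \widehat{\nu}^{rsc}(A^{rs} : \Jrsc_n(x)) = 1
\]
and, via an internesting bound $\Jrsc_{n+k}(x') \subof h^u(\Jrsc_n(x)) \subof \Jrsc_{n-k}(x')$ (where $h^u$ is the unstable holonomy acting sheet-by-sheet on the juliennes) combined with the transverse absolute continuity of $h^u$ and Lemma \ref{comparablesequences},
\[
    \lim_n \widehat{\nu}^{rsc}(A^{rs} : \Jrsc_n(x)) = 1
    \iff
    \lim_n \widehat{\nu}^{rsc}(A^{rs} : \Jrsc_n(x')) = 1.
\]
Chaining the ball/julienne equivalence at $x$ and $x'$ together with the holonomy equivalence yields $x_0 \in X_0 \iff x_0' \in X_0$, which is $u$-saturation of $X_0$.

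\emph{Main obstacle.} The decisive step is the julienne density equivalence in the unstable case: translating Lebesgue density of $A_0$ in $\mu$-balls of $M$ into density of $A^{rs}$ in $rsc$-juliennes living in $M_f$. This requires marrying the standard ball-versus-$cu$-julienne comparison from \cite{BW10} with a fiber-averaging argument that exploits the product form of $\nu$ and uses Birkhoff's ergodic theorem for $\sig$ on $M_f$ to guarantee that most fiber slices through $\Jrsc_n(x)$ are ``good'' (i.e.\ the symmetric difference $A^{rs} \triangle A^u$ has small relative measure on them). The internesting itself is technically easier but still delicate, since the unstable holonomy inside an $scu$-sheet must be shown to respect the $r$-direction; here the isometry of $scu$-holonomies in the fiber direction (Lemma \ref{lemma:fibermetricinvariance}) and the almost-tangency and exponential growth bounds for the fake foliations (Proposition \ref{prop:fakefoln}) are what make a uniform constant $k$ available. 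All remaining bookkeeping reduces, modulo the product-measure structure, to either the original Burns--Wilkinson machinery or to standard Fubini arguments.
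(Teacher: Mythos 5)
Your proposal is correct and follows essentially the same route as the paper: the theorem reduces to two lemmas (stable and unstable saturation of the Lebesgue density points of $A_0$); stable saturation is handled by a Fubini argument that produces a ``good'' $scu$-sheet (your ``good slice'') on which the Burns--Wilkinson machinery applies verbatim; unstable saturation requires the new $rsc$-juliennes, the ball-versus-julienne density equivalence (the paper's Lemma \ref{juldens}), and the internesting bound for the unstable holonomy (the paper's Proposition \ref{Prop:UnstableHolonomyBounds}). One small precision: for the abstract statement of Theorem \ref{bi} the mechanism that makes ``most fibers good'' is Fubini applied to $\nu(A^{rs}\triangle A^u)=0$ together with the product structure $\nu=\nu^{scu}\times\nu^r$; Birkhoff's theorem enters only later when Theorem \ref{bi} is applied in the proof of Theorem \ref{main} to produce an $A^{rs}$ satisfying the hypotheses.
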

The proof of Theorem \ref{bi} splits into two parts: Lemma \ref{lemma:A0s-sat}, which establishes $s$-saturation, and Lemma \ref{lemma:A0u-sat}, which establishes $u$-saturation. We prove these lemmas in Sections \ref{Sec: Stable saturation} and \ref{Sec: Unstable saturation}, respectively.\par
For a set $A^{rs}$ as in Theorem \ref{bi}, define $A_0\coloneqq \pi(A^{rs})$.
\begin{lema}\label{lemma:A0s-sat}
    The set of Lebesgue density points of $A_0$ is $s$-saturated.
\end{lema}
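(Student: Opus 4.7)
The plan is to follow the outline given in the introduction: reduce the question to the Burns--Wilkinson stable-saturation argument applied inside a well-chosen ``good'' copy of a small disc $U$ in the space of orbits. Since $A^{rs}$ is essentially $u$-saturated, fix a $u$-saturated set $A^u \subset M_f$ with $\nu(A^{rs} \triangle A^u) = 0$. Let $x_0, x_0' \in M$ lie on a common local stable leaf; it suffices to show that $x_0$ is a Lebesgue density point of $A_0$ if and only if $x_0'$ is. Choose a small, open, simply connected $U \subset M$ containing both points and the stable segment joining them. By Proposition \ref{canon}, identify $\pi^{-1}(U)$ with $U \times \pi^{-1}(x_0)$, so that by Proposition \ref{defnu} the restriction of $\nu$ to $\pi^{-1}(U)$ is the product measure $\mu|_U \times \nu^r$.

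Next, I would use Fubini. Because $A^{rs}$ is $rs$-saturated (hence $r$-saturated), the intersection $A^{rs} \cap (U \times \{\xi\})$ corresponds via $\pi$ to exactly $A_0 \cap U$ for \emph{every} $\xi \in \pi^{-1}(x_0)$. Applied to $\nu(A^{rs} \triangle A^u) = 0$, Fubini yields a subset of full $\nu^r$-measure of fibers $\xi \in \pi^{-1}(x_0)$ for which $(A^{rs} \triangle A^u) \cap (U \times \{\xi\})$ has $\mu$-measure zero; call any such $\xi$ \emph{good}. Fix a good $\xi$, and let $x, x'$ be the lifts of $x_0, x_0'$ in $U \times \{\xi\}$. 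By Proposition \ref{prop:fakefoln}\ref{Prop_v: Uniqueness}, the stable segment from $x_0$ to $x_0'$ lifts uniquely to a stable segment from $x$ to $x'$ inside the local $scu$-sheet $\mathcal{W}^{scu}(x, r)$.

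Now apply the Burns--Wilkinson argument \emph{inside the sheet} exactly as in \cite{BW10}, replacing their target set $A^s$ by $A^u \cap (U \times \{\xi\})$, which is a locally $u$-saturated subset of the sheet (since $u$-saturation in $M_f$ passes to $scu$-sheets). Build $cu$-juliennes $\widehat{J}^{cu}_n(x)$ and $\widehat{J}^{cu}_n(x')$ from the fake foliations $\widehat{\mathcal{W}}^c_p$, $\widehat{\mathcal{W}}^u_p$ of Proposition \ref{prop:fakefoln}. Their argument then provides
\[
    \lim_{r \searrow 0}\mu(A_0 : B_r(x_0)) = 1
    \quad\Longleftrightarrow\quad
    \lim_{n\to\infty}\widehat{\nu}^{cu}(A^u : \widehat{J}^{cu}_n(x)) = 1,
\]
using the diffeomorphism $\pi$ between the sheet and $U$ to transfer Lebesgue density from $M$ into the sheet (noting that $A^u$ coincides with $\pi^{-1}(A_0)$ on this sheet up to a $\mu$-null set). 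The Burns--Wilkinson internesting property, carried out with the stable holonomy $h^s$ inside the sheet, gives $\widehat{J}^{cu}_{n+k}(x') \subset h^s(\widehat{J}^{cu}_n(x)) \subset \widehat{J}^{cu}_{n-k}(x')$ for a uniform $k$, and Lemma \ref{comparablesequences} converts this into the analogous equivalence at $x'$, hence at $x_0'$.

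The hard part will be certifying that the BW internal machinery -- the construction of $cu$-juliennes with controlled exponential rates, the equivalence between ball density and julienne density, and the stable-holonomy internesting -- transfers unchanged into our $scu$-sheet. These steps depend on the center-bunching condition together with the regularity and local-invariance properties of the fake foliations; all of the relevant ingredients are provided by Proposition \ref{prop:fakefoln}, but one must check that the estimates hold with constants uniform in the choice of orbit $p$ (so that the argument does not degrade as one varies the good fiber $\xi$). The choice of a simply connected $U$ and the product identification of $\pi^{-1}(U)$ are precisely what make this uniformity available and make the diffeomorphism $\pi$ on each sheet compatible with $\mu|_U$, so the bulk of the work is bookkeeping rather than new dynamics.
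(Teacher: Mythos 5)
Your proposal follows essentially the same route as the paper. The paper packages your Fubini step into Lemma~\ref{Lemma:Birkhoff_typical_sheets} (the existence of "good sheets"), your transfer-of-density observation into Lemma~\ref{Lemma:scu_density_equivalence}, and your "run Burns--Wilkinson inside the sheet" step into Lemma~\ref{Lemma:SCU_density_saturated} (which invokes \cite[Theorem~5.1]{BW10} wholesale rather than re-deriving the $cu$-julienne internesting); your inline version of these three pieces is correct and matches the paper's argument, including the key point that the stable segment in $M$ lifts to a stable segment inside the chosen sheet because $E^s$ depends only on forward orbits. One minor citation slip: the lift of the stable segment into the sheet rests on the projection property of the true stable foliation (Proposition~\ref{prop:fakefoln}\ref{Lemma_viii:Endos} and the discussion after Figure~\ref{fig:StableProjection}) more than on the uniqueness property \ref{Prop_v: Uniqueness}, which concerns the fake foliations at the center point.
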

\begin{lema}\label{lemma:A0u-sat}
    The set of Lebesgue density points of $A_0$ is $u$-saturated.
\end{lema}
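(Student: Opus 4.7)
The plan is to show that if $x_0$ is a Lebesgue density point of $A_0$ and $x_0'$ lies on an entire unstable manifold through $x_0$, then $x_0'$ is also a Lebesgue density point. Lift to $M_f$: pick $x \in \pi^{-1}(x_0)$ and $x' \in \mathcal{W}^u(x)$ with $\pi(x')=x_0'$, and by concatenating finitely many short unstable segments we may assume $x' \in \mathcal{W}^u\loc(x) \subset \mathcal{W}^{scu}(x,r)$, so the fake leaves of Proposition \ref{prop:fakefoln} are defined at both $x$ and $x'$. Since $A^{rs}$ is $r$-saturated we have $A^{rs} = \pi^{-1}(A_0)$, and Proposition \ref{defnu} gives $\nu(A^{rs} : U \times F) = \mu(A_0 : U)$ on every chart $\pi^{-1}(U) = U \times \pi^{-1}(x_0)$.

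The strategy follows the outline in the introduction. First, build a nested family of $rsc$-juliennes
\begin{equation*}
\widehat{J}^{rsc}_n(x) \subset \widehat{\mathcal{W}}^{rsc}_x(x) = \mathcal{W}^r\loc(x) \times \widehat{\mathcal{W}}^{sc}_x(x)
\end{equation*}
whose $\widehat{\mathcal{W}}^{sc}$-factor is a Burns--Wilkinson $sc$-julienne tuned to the cocycles $\lambda^s, \lambda^c, \hat\lambda^c$, and whose $r$-factor shrinks at rate $\lambda^r_n$. Using the product structure of $\nu$ together with $A^{rs}=\pi^{-1}(A_0)$, establish the geometric equivalence
\begin{equation*}
\lim_{r\searrow 0}\mu(A_0:B(x_0,r))=1\iff\lim_{n\to\infty}\widehat{\nu}^{rsc}(A^{rs}:\widehat{J}^{rsc}_n(x))=1,
\end{equation*}
and analogously at $x'$ and $x_0'$. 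The translation from a Euclidean ball in $M$ to a product set extending into the Cantor fiber is carried out by covering $B(x_0, r)$ by projections of $rsc$-juliennes based at many orbits in $\pi^{-1}(x_0)$ and applying Fubini to the Bernoulli fiber measure $\nu^r$.

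Second, compare the two julienne sequences via the unstable holonomy $h^u \colon \widehat{\mathcal{W}}^{sc}_x(x) \to \widehat{\mathcal{W}}^{sc}_x(x')$, extended by the identity on the $\mathcal{W}^r\loc$ factor. Adapting the internesting lemma of \cite{BW10} using the exponential estimates of Proposition \ref{prop:fakefoln}(\ref{Prop_iii: Exponential growth bounds at local scales}) and center bunching, prove
\begin{equation*}
\widehat{J}^{rsc}_{n+k}(x')\subset (h^u\times\mathrm{id})(\widehat{J}^{rsc}_n(x))\subset \widehat{J}^{rsc}_{n-k}(x')
\end{equation*}
for a uniform $k$. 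Let $A^u$ be a $u$-saturated set with $\nu(A^{rs}\triangle A^u)=0$. By Fubini, on a full-$\nu^r$-measure set of fibers inside $\widehat{J}^{rsc}_n(x)$ we have $A^{rs}=A^u$ within the fiber up to a $\nu^{scu}$-null set, and on each such fiber $h^u$ preserves $A^u$ exactly because $A^u$ is $u$-saturated. Combined with bounded-Jacobian transverse absolute continuity of $\mathcal{W}^u$ inside $scu$-sheets and Lemma \ref{comparablesequences}, this yields
\begin{equation*}
\lim_{n\to\infty}\widehat{\nu}^{rsc}(A^{rs}:\widehat{J}^{rsc}_n(x))=1\iff\lim_{n\to\infty}\widehat{\nu}^{rsc}(A^{rs}:\widehat{J}^{rsc}_n(x'))=1,
\end{equation*}
and the geometric equivalence run backwards at $x'$ concludes that $x_0'$ is a Lebesgue density point of $A_0$.

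The principal obstacle is the geometric equivalence of the first step: the juliennes extend into the Cantor fiber whereas Lebesgue density is defined via Euclidean balls in $M$, so the relative weights of the fibers under $\nu^r$ must be controlled well enough for the projected juliennes to approximate Euclidean balls up to uniform constants. The $r$-shrinkage rate must be tuned carefully so that this approximation survives and so that the internesting with $h^u \times \mathrm{id}$ in the second step remains compatible with the $sc$-exponents of the Burns--Wilkinson juliennes. This is the technical heart of the non-invertible case, where the ``most fibers are good'' intuition of the introduction is converted into a quantitative estimate absorbing the null-set slack between $A^{rs}$ and $A^u$.
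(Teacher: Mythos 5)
Your overall architecture matches the paper's: work in $M_f$, build $rsc$-juliennes extending into the fiber direction, prove a julienne-density equivalence (the paper's Lemma \ref{juldens}), prove an internesting property for the unstable holonomy (Proposition \ref{Prop:UnstableHolonomyBounds}), and conclude via Lemma \ref{comparablesequences} and transverse absolute continuity (Lemma \ref{Matrix24}). The second step of your proposal is sound in spirit and lines up with the paper.

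There is, however, a genuine gap in the first step. You assert the equivalence
\begin{equation*}
\lim_{r\searrow 0}\mu(A_0:B(x_0,r))=1\iff\lim_{n\to\infty}\widehat{\nu}^{rsc}(A^{rs}:\widehat{J}^{rsc}_n(x))=1
\end{equation*}
with $A^{rs}$ inside the julienne. The correct statement (Lemma \ref{juldens}) puts the $u$-saturated set $A^u$ inside the julienne, not $A^{rs}$, and this is not cosmetic. The julienne $\widehat{J}^{rsc}_n(x)$ sits in $\widehat{\mathcal{W}}^{rsc}\loc(x)$, a codimension-$\dim E^u$ set of $\nu$-measure zero, while $A^{rs}$ and $A^u$ agree only up to a $\nu$-null set. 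A $\nu$-null set gives no control over the discrepancy on a lower-dimensional slice, so the equivalence as you state it does not follow. The paper's Section \ref{Section: Density of rsc-julienne} necessarily passes through the full-dimensional thickened sets $E_n$, $F_n$, $G_n$, performs the switch from $A^{rs}$ to $A^u$ at the level of $\nu(A^{rs}:G_n)\leftrightarrow\nu(A^u:G_n)$ where the null-set argument is legitimate, and only then collapses onto the julienne using the $\mathcal{W}^u$-saturation of $A^u$.

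Your proposed repair, that ``by Fubini, on a full-$\nu^r$-measure set of fibers inside $\widehat{J}^{rsc}_n(x)$ we have $A^{rs}=A^u$ within the fiber up to a $\nu^{scu}$-null set,'' does not close this gap. Even on a good $scu$-sheet, the $sc$-slice of the julienne has $\nu^{scu}$-measure zero, so the null-set statement says nothing about agreement of $A^{rs}$ and $A^u$ there. This is exactly the obstruction the paper flags at the end of Section \ref{Sec: Stable saturation} explaining why the good-sheet argument works for $s$-saturation but not $u$-saturation. A consequence of the gap is that your final equivalence, which transfers density between $x$ and $x'$, is also stated for $A^{rs}$ on the juliennes, while the internesting-plus-$u$-saturation argument can only be run for $A^u$; to reconnect to the Lebesgue density of $A_0$ you again need the $E_n,F_n,G_n$ chain, now at $x'$.
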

We now show that Theorem \ref{bi} implies Theorem \ref{main} by a version of the Hopf argument.
\begin{proof}[Proof of Theorem \ref{main}] 
    Let $\varphi_0:M\to \R$ be a continuous function, and let $\varphi\coloneqq \varphi_0 \circ \pi : M_f \to \bbR$
    be the lift of $\varphi_0$ to the space of orbits.
    Let $\varphi^+$ be the forward Birkhoff average of $\varphi$ under the shift map $\sigma$.
    Since $\varphi$ is uniformly continuous
    and $\sigma$ contracts distances in both the $r$ and $s$ directions,
    the function $\varphi^+$ is constant along both $\Wr$ sets and stable leaves.
    Hence, for any $a\in\R$, the set $A^{rs}:=\{x\in M_f:\varphi^+(x)\leq a  \}$ is $rs$-saturated.
    Since the backward Birkhoff average $\varphi^-$ is constant on unstable leaves,
    and $\varphi^+=\varphi^-$ holds almost everywhere,
    the set $A^{rs}$ is essentially $u$-saturated.

    From Theorem \ref{bi},
    the set $X \subset M$ of Lebesgue density points of $A_0 = \pi(A^{rs})$ is bisaturated in $M$.
    Essential accessibility then implies that $X$ has zero or full measure.
    By the Lebesgue density theorem, $A_0$ itself has zero or full measure.
    Note that $A_0=\{x\in M \colon \varphi^+_0(x)\leq a\}$ where $\varphi^+_0$ is the forward Birkhoff average of $\varphi_0$.
    Since $a \in \bbR$ was arbitrary, this shows that $\varphi^+_0$ is
    essentially constant and so $f$ is ergodic. 
\end{proof}

\section{Stable saturation}\label{Sec: Stable saturation}
We now prove that the set of Lebesgue density points of $A_0$ is saturated by
stable leaves.
Recall that $A_0$ is defined as $\pi(A^{rs})$ and $A^{rs}\subset M_f$ is a measurable set that is both $rs$-saturated and essentially $u$-saturated. 
Let $A^u$ be the $u$-saturate of $A^{rs}$ that is, $A^u$ is $u$-saturated and $ \nu(A^u \triangle A^{rs}) = 0$.

Given a measurable set $X\subset \mathcal{W}^{scu}(x)$ inside an $scu$-sheet,
a point $y$ is an \textit{$scu$-Lebesgue density point} of $X$ if
$$\lim_{r\to 0}{\nu^{scu}(X: \mathcal{W}^{scu}(y,r))}=1.$$
\begin{lema}
\label{Lemma:scu_density_equivalence}
    A point $x\in M_f$ is an $scu$-Lebesgue density point of $A^{rs}$ if and only if $\pi(x)$ is a Lebesgue density point of $A_0$.
\end{lema}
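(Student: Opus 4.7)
The plan is to exploit two observations: first, the $r$-saturation of $A^{rs}$ forces $A^{rs} = \pi^{-1}(A_0)$; and second, for sufficiently small $r>0$ the projection $\pi$ restricts to a measure-preserving bijection between $\mathcal{W}^{scu}(x,r)$ and the metric ball $B_M(\pi(x),r) \subset M$. Once these are in hand, the lemma reduces to an identity of density ratios.

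First I would establish $A^{rs} = \pi^{-1}(A_0)$. The inclusion $A^{rs} \subset \pi^{-1}(A_0)$ is immediate from $A_0 = \pi(A^{rs})$. Conversely, for any $x \in A^{rs}$, the fiber $\pi^{-1}(\pi(x))$ equals $\mathcal{W}^r\loc(x) \subset \mathcal{W}^r(x)$, and $r$-saturation gives $\mathcal{W}^r(x) \subset A^{rs}$; taking the union over $x \in A^{rs}$ yields $\pi^{-1}(A_0) \subset A^{rs}$.

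Next I would combine Lemma \ref{lemma:scumetric} with the fact (used in the proof of Proposition \ref{Prop:scu measure}) that $\pi|_{\mathcal{W}^{scu}(x)}$ is a local diffeomorphism. Since the $M_f$-metric on $\mathcal{W}^{scu}(x,1)$ coincides with $d_M$ pulled back by $\pi$, for all sufficiently small $r$ the connected component $\mathcal{W}^{scu}(x,r)$ maps bijectively onto $B_M(\pi(x),r)$: a local section $\sigma_x$ of $\pi$ at $x$ sends each $z \in B_M(\pi(x),r)$ to a point of $\mathcal{W}^{scu}(x,r)$, and surjectivity together with injectivity of $\pi$ on this local component follows from Lemma \ref{lemma:scumetric}. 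Combined with Proposition \ref{Prop:scu measure}, which gives $\nu^{scu}(E) = \mu(\pi(E))$ for $E \subset \mathcal{W}^{scu}\loc(x)$, and with $\pi(A^{rs} \cap \mathcal{W}^{scu}(x,r)) = A_0 \cap B_M(\pi(x),r)$ (using $A^{rs} = \pi^{-1}(A_0)$ and bijectivity of $\pi$ on $\mathcal{W}^{scu}(x,r)$), this yields
\[
\nu^{scu}\!\left(A^{rs} : \mathcal{W}^{scu}(x,r)\right) \;=\; \frac{\mu(A_0 \cap B_M(\pi(x),r))}{\mu(B_M(\pi(x),r))} \;=\; \mu\!\left(A_0 : B_M(\pi(x),r)\right)
\]
for all such $r$. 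Letting $r \searrow 0$ on both sides gives the stated equivalence.

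There is no substantial obstacle here; the lemma is essentially a bookkeeping identity once $A^{rs}$ is recognized as $\pi^{-1}(A_0)$. The only mildly delicate point is verifying that $\pi(\mathcal{W}^{scu}(x,r))$ is \emph{exactly} (not merely approximately) $B_M(\pi(x),r)$ for small $r$, but this is immediate from the isometry statement in Lemma \ref{lemma:scumetric}.
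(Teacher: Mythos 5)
Your proof is correct and follows essentially the same approach as the paper, which invokes Proposition~\ref{Prop:scu measure} to turn the $scu$-density ratio into a Lebesgue density ratio. You have usefully spelled out the two facts the paper's proof implicitly relies on, namely $A^{rs}=\pi^{-1}(A_0)$ (via $r$-saturation) and the measure-preserving bijection between $\mathcal{W}^{scu}(x,r)$ and $B_M(\pi(x),r)$, but the underlying argument is the same.
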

\begin{proof}
    For a point \( x \in M_f \) and radius $r > 0$,
    we have by Proposition \ref{Prop:scu measure} that
    $\nu^{scu}(\mathcal{W}^{scu}(x, r)) = \mu(B(x_0, r))$ and
    $\nu^{scu}(A^{rs} \cap \mathcal{W}^{scu}(x, r)) = \mu(A_0 \cap B(x_0, r)).$
    The lemma follows from this.
\end{proof}

A \textit{good sheet} is an $scu$-sheet \( \mathcal{L} \) such that
the sets  $A^u$  and  $A^{rs}$  agree almost everywhere inside of $\mathcal{L} $ with respect to the  $\nu^{scu}$ measure, that is,
$$\nu^{scu}(\, (A^u \cap \mathcal{L})\,  \triangle \, (A^{rs} \cap \mathcal{L})\, ) = 0.$$
In particular, $A^{rs}$ is essentially $u$-saturated inside of a good sheet.

\begin{lema}
    \label{Lemma:Birkhoff_typical_sheets}
    For any point $x_0\in M$ and for $\nu^r$-a.e. $x\in \pi^{-1}(x_0)$, the sheet \( \mathcal{W}^{scu}(x) \) is a good sheet.
\end{lema}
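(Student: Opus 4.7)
The plan is to apply Fubini's theorem in the local product charts provided by Proposition~\ref{canon}, upgrade the resulting local agreement of $A^u$ and $A^{rs}$ to global goodness of $scu$-sheets by covering $M$ with finitely many such charts, and use the $scu$-holonomy invariance of $\nu^r$ to keep the exceptional transverse set $\nu^r$-null as we extend across the lamination.

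First I would choose an open ball $U \subset M$ containing $x_0$ small enough that Proposition~\ref{canon} applies, so that $\pi^{-1}(U) \cong U \times \pi^{-1}(x_0)$. By Proposition~\ref{defnu}, the restriction of $\nu$ to $\pi^{-1}(U)$ is the product $\nu^{scu} \times \nu^r$. Since $\nu(A^u \triangle A^{rs}) = 0$ by hypothesis, Fubini gives
\[
0 \;=\; \int_{\pi^{-1}(x_0)} \nu^{scu}\!\bigl( (A^u \triangle A^{rs}) \cap (U \times \{x\}) \bigr)\, d\nu^r(x),
\]
so for $\nu^r$-a.e.\ $x \in \pi^{-1}(x_0)$ the local sheet $\Wscu\loc(x)$ has zero $\nu^{scu}$-measure intersection with $A^u \triangle A^{rs}$.

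To extend this local statement to the global sheet $\Wscu(x)$, I would cover the compact manifold $M$ by finitely many charts $U_1, \dots, U_k$ of the above form, with base points $z_j \in U_j$, and run the same Fubini argument in each $\pi^{-1}(U_j)$ to produce a $\nu^r$-null set $N_j \subset \pi^{-1}(z_j)$ parametrizing the bad local sheets. Setting $\mathcal{S} = \bigcup_{j=1}^k (U_j \times N_j)$ and letting $\widehat{\mathcal{S}}$ denote its $scu$-saturate, one observes that a global sheet is bad precisely when it meets $\mathcal{S}$, since any such meeting forces the sheet to contain an entire bad local piece of the form $U_j \times \{c\}$ with $c \in N_j$. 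Because each global sheet is a second-countable manifold covered by countably many local sheets, it meets each fiber in a countable set, so in any chart $\pi^{-1}(U_l) = U_l \times \pi^{-1}(z_l)$ the $scu$-invariant set $\widehat{\mathcal{S}}$ is described as $U_l \times T_l$ where $T_l$ is a countable union of $scu$-holonomy images of the $N_j$'s. Lemma~\ref{lemma:fibermetricinvariance} together with the intrinsic characterization of $\nu^r$ in Proposition~\ref{Prop:r-measure} shows that $scu$-holonomies between fibers preserve $\nu^r$, so each $T_l$ is a countable union of $\nu^r$-null sets and hence $\nu^r$-null. Choosing $l$ so that $x_0 \in U_l$ yields $\nu^r(\widehat{\mathcal{S}} \cap \pi^{-1}(x_0)) = 0$, which is the desired conclusion.

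The main obstacle I expect is the transport step: one has to verify both that the $scu$-holonomy between fibers really is $\nu^r$-preserving, using the uniqueness of $\nu^r$ in Proposition~\ref{Prop:r-measure} together with the isometry property of Lemma~\ref{lemma:fibermetricinvariance}, and that the $scu$-equivalence relation on each fiber is countable-to-one, so that the saturation of $\mathcal{S}$ along the lamination remains $\nu^r$-null. The latter should follow from second-countability of sheets combined with their local-product structure in $\pi^{-1}(U_j)$, but the Cantor structure of the fibers means one must handle this carefully to avoid uncountable blow-up of the exceptional set.
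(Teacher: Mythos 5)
Your proof is correct and follows the paper's basic strategy---Fubini's theorem in the local product charts supplied by Propositions \ref{canon} and \ref{defnu} shows that $\nu^r$-almost every \emph{local} sheet is good---but the paper then dispatches the globalization in one sentence (``as each global $scu$-sheet is the countable union of local $scu$-sheets, the result follows''), whereas you carefully elaborate the transport step hiding in that sentence. The issue you correctly identify is that the Fubini null set lives in a fixed fiber $\pi^{-1}(z_j)$, while a global sheet through a point of $\pi^{-1}(x_0)$ is built from local sheets based at \emph{other} fibers, so one must show the $scu$-saturation of these null sets remains $\nu^r$-null when sliced over $\pi^{-1}(x_0)$. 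Your resolution---cover $M$ by finitely many charts, use that $\Wscu(x)\cap\pi^{-1}(z_j)$ is countable (second-countability of sheets plus discreteness of the fiber intersection), and invoke $\nu^r$-invariance of $scu$-holonomy via the isometry property of Lemma \ref{lemma:fibermetricinvariance} together with the ball-radius characterization of $\nu^r$ from Proposition \ref{Prop:r-measure}---supplies precisely what is needed and seems to be what the authors implicitly assume. One point worth making explicit if you were to write this formally: the claim that $T_l$ is a \emph{countable} union of holonomy images of the $N_j$'s rests on there being only countably many $scu$-holonomies between two fixed fibers, which follows because $\pi\colon\Wscu(x)\to M$ is a covering map, holonomy depends only on the homotopy class of the projected path, and $\pi_1(M)$ is countable for a compact manifold; without that observation your union over $c'\in N_j$ of the countable sets $\Wscu(c')\cap\pi^{-1}(z_l)$ is a priori uncountable.
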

\begin{proof}
    From Proposition \ref{defnu}, the restriction of \( \nu \) to \( \pi^{-1}(B(\pi(x), R)) \) is \( \nu^{scu} \times \nu^{r} \).
    Since $ \nu(A^u \triangle A^{rs}) = 0$ it follows by Fubini's theorem that
    for every \( x_0 \in M \) and \( \nu^r \)-almost every \( x \in  \pi^{-1}(x_0)\), that
    $$\nu^{scu}((A^u\cap\mathcal{W}^{scu}\loc(x)) \triangle (A^{rs}\cap\mathcal{W}^{scu}\loc(x))) = 0.$$
    Then as each global $scu$-sheet is the countable union of local $scu$-sheets,
    the result follows.
\end{proof}
\begin{lema}
\label{Lemma:SCU_density_saturated}
Let $\mathcal{L}$ be a good sheet,
then the set of $scu$-Lebesgue density points of $A^{rs}$ is $s$-saturated inside of $\mathcal{L}$.
\end{lema}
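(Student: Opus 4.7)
The strategy is to adapt the stable-saturation step of Burns--Wilkinson \cite{BW10} to the space $(\mathcal{L}, \nu^{scu})$, exploiting the fact that $\mathcal{L}$ is a good sheet. Fix $x, x' \in \mathcal{L}$ lying on a common local stable leaf $\mathcal{W}^s(x, R)$; it suffices to prove that $x$ is an $scu$-Lebesgue density point of $A^{rs}$ if and only if $x'$ is.

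From the fake foliations of Proposition \ref{prop:fakefoln} (transported to $\mathcal{L}$ via $\pi$), I would build nested $cu$-juliennes $\Jcu_n(x)$ and $sc$-juliennes $\Jsc_n(x)$ shrinking exponentially at the distinct rates controlled by $\hat\lambda^c_n,\lambda^u_n$ and $\lambda^s_n,\hat\lambda^c_n$ respectively, with induced volumes $\widehat\nu^{cu}$ and $\widehat\nu^{sc}$ on the $cu$- and $sc$-leaves. The julienne density basis argument of \cite[\S 6]{BW10}, which only uses the $s$-saturation of $A^{rs}$, the internesting of $scu$-balls with $sc$- and $cu$-julienne products, and Lemma \ref{comparablesequences}, then yields the equivalence
\[
    \lim_{r \searrow 0} \nu^{scu}\bigl(A^{rs} : \mathcal{W}^{scu}(x, r)\bigr) = 1
    \spacedblarrow
    \lim_{n \to \infty} \widehat\nu^{cu}\bigl(A^{rs} : \Jcu_n(x)\bigr) = 1,
\]
and similarly with $x$ replaced by $x'$.

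Now let $h^s$ denote the stable holonomy inside $\mathcal{L}$ along the short stable leaf from $x$ to $x'$. Using center bunching together with the $C^1$ subfoliation of each fake $cu$-leaf by fake unstable leaves (Proposition \ref{prop:fakefoln}\ref{Lemma_vii:Regularity_Strong_Foliation}), I would establish the internesting
\[
    \Jcu_{n+k}(x') \subof h^s\bigl(\Jcu_n(x)\bigr) \subof \Jcu_{n-k}(x')
\]
for a uniform integer $k$, exactly as in \cite[\S 6]{BW10}. Because $\mathcal{L}$ is a good sheet, $A^{rs}$ and $A^u$ agree $\nu^{scu}$-a.e.\ on $\mathcal{L}$ and $A^u$ is genuinely $u$-saturated; combined with the transverse absolute continuity of the unstable foliation inside $scu$-sheets with bounded Jacobians, this lets stable holonomy transport densities of $A^{rs}$ up to a bounded ratio, so that
\[
    \lim_{n\to\infty} \widehat\nu^{cu}\bigl(A^{rs}:h^s(\Jcu_n(x))\bigr) = 1
    \spacedblarrow
    \lim_{n\to\infty} \widehat\nu^{cu}\bigl(A^{rs}:\Jcu_n(x)\bigr) = 1.
\]
Applying Lemma \ref{comparablesequences} to the internested sequences $\{h^s(\Jcu_n(x))\}$ and $\{\Jcu_n(x')\}$ then transfers julienne density from $x$ to $x'$, and the julienne--to--ball equivalence from the previous step completes the proof.

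The principal obstacle is the internesting estimate: one must bound the distortion of $h^s$ on $cu$-juliennes using the \emph{non-stationary} contraction rates $\hat\lambda^c_n$ and $\lambda^u_n$ along the orbit of $x$ in $M_f$, and it is precisely here that the center bunching hypothesis and the regularity statement Proposition \ref{prop:fakefoln}\ref{Lemma_vii:Regularity_Strong_Foliation} are needed. Once this is in hand, the good-sheet hypothesis supplies the mod-zero $u$-saturation of $A^{rs}$ inside $\mathcal{L}$ required to mimic the classical Hopf-type argument in the non-invertible setting.
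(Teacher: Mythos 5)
Your proposal is correct and follows essentially the same route as the paper's proof. The paper's argument is terse: it observes that inside a good sheet $\mathcal{L}$ the set $A^{rs}$ is $s$-saturated and essentially $u$-saturated for $\nu^{scu}$, that the restriction of $\sigma$ to the orbit of $\mathcal{L}$ is a $C^2$, partially hyperbolic, center-bunched diffeomorphism whose uniform estimates come from the compactness of $M_f$ (not of $\mathcal{L}$), and then simply invokes \cite[Theorem~5.1]{BW10} to conclude bisaturation of the $scu$-density points, of which $s$-saturation is a part. Your write-up unpacks the julienne argument that this citation conceals, with the same key ingredients: the internesting of stable-holonomy images of $cu$-juliennes, Proposition~\ref{prop:fakefoln}\ref{Lemma_vii:Regularity_Strong_Foliation} supplied by center bunching, and the good-sheet hypothesis providing the essential $u$-saturation inside $\mathcal{L}$.

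One small caution about the middle display: as written, the equivalence between $\lim_n \widehat\nu^{cu}(A^{rs}:h^s(\Jcu_n(x)))=1$ and $\lim_n \widehat\nu^{cu}(A^{rs}:\Jcu_n(x))=1$ compares densities taken on two different $cu$-leaves, and stable holonomy between $cu$-leaves is \emph{not} absolutely continuous, so ``transverse absolute continuity of $\mathcal{W}^u$'' does not by itself justify it. The actual Burns--Wilkinson mechanism reduces the $cu$-julienne density of the $u$-saturated set $A^u$ to a $c$-julienne density (using absolute continuity of $\mathcal{W}^u$ inside the $cu$-leaf), and then uses that $h^s$ restricted to center leaves is absolutely continuous thanks to center bunching; the internesting with $\Jcu_n(x')$ then transfers the density. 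You name the right ingredients but phrase the composite step in a way that, if read literally, claims more than the ingredients give.
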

\begin{proof}
The proof of \cite[Theorem 5.1]{BW10} assumes that the dynamics are a $C^2$, partially hyperbolic and center bunched diffeomorphism on a compact manifold.
Under these hypotheses, they show that for any set $A$ that is both essentially $s$-saturated and essentially $u$-saturated, the set of Lebesgue density points of $A$ is bisaturated.

We can recreate the conditions necessary for the proof, since the restriction of $\sigma$ to the orbit of $\mathcal{L}$ is a $C^2$, partially hyperbolic and center bunched diffeomorphism.
Although $\mathcal{L}$ is not compact, the space of orbits $M_f$ is compact. 
Thus, uniform estimates, uniform angles, and other arguments that rely on continuity in a compact setting still apply by considering that $\sigma$ operates in the compact space $M_f$.
Finally, Proposition \ref{prop:fakefoln} states uniformity results identical to the fake invariant foliations used by Burns and Wilkinson.
All of this together allow us to recreate the proof of \cite[Theorem 5.1]{BW10} inside the $scu$-sheet.
This shows that the set of $scu$-Lebesgue density points of $A^{rs}$ is bisaturated.
\end{proof}

\begin{proof}[Proof of Lemma \ref{lemma:A0s-sat}]
    Let $x_0 \in M$ be a Lebesgue density point of $A_0$, and
    consider a point $y_0 \in \mathcal{W}^s(x_0, R)$ on its local stable
    manifold.
    It suffices to prove that $y_0$ is also a Lebesgue density point of $A_0$. 
    By Lemma \ref{Lemma:Birkhoff_typical_sheets}, there exists $x\in \pi^{-1}(x_0)$ such that \( \mathcal{W}^{scu}(x) \) is a good sheet.
    Let $y\in M_f$ be the unique point in the intersection $\mathcal{W}^s\loc(x)\cap \pi^{-1}(y_0)$. 
    By Lemma \ref{Lemma:scu_density_equivalence}, $x$ is an $scu$-Lebesgue density point.
    By Lemma \ref{Lemma:SCU_density_saturated} , we see that $y$ is also an $scu$-Lebesgue density point.
    Then, applying Lemma \ref{Lemma:scu_density_equivalence} again, $y_0$ is a Lebesgue density point of $A_0$.
    Therefore, the set of Lebesgue density points of $A_0$ is $s$-saturated.
\end{proof}

It is tempting to apply the same argument to establish \( u \)-saturation. However, the reason this approach works for \( s \)-saturation but not for \( u \)-saturation lies in the properties of the stable and unstable foliations. The stable foliation on each \( scu \)-sheet projects down to the stable foliation on \( M \), allowing any stable curve on \( M \) to be lifted to an \( scu \)-sheet. Moreover, this lift is guaranteed to be tangent to the stable distribution on that \( scu \)-sheet.

In contrast, for the unstable saturation case, an unstable curve on \( M \) may only lift to an \( scu \)-sheet where \( A^{rs} \) is not essentially \( u \)-saturated. This limitation prevents us from satisfying the conditions necessary to apply Lemma \ref{Lemma:SCU_density_saturated}, making it impossible to use the same argument for \( u \)-saturation.
Figures \ref{fig:StableProjection} and \ref{fig:multipleunstables} illustrate the differences between the stable foliation and the unstable foliation on each $scu$-sheet, respectively.
\section{Unstable saturation}
\label{Sec: Unstable saturation}
We will establish the $u$-saturation of the set of Lebesgue density points of $A_0$,
this result is necessary for the proof of Theorem \ref{bi}.
Based on Burns and Wilkinson's arguments \cite[Section 5]{BW10}, we will show the following.
For any \( p_0 \in M \) and \( p \in \pi^{-1}(p_0) \), if \( x_0, x'_0 \in \pi(\mathcal{W}^u(p,1)) \) and \( x_0 \) is a Lebesgue density point of \( A_0 \), then \( x'_0 \) is also a Lebesgue density point.
We restate these arguments to establish definitions and notation in order to maintain consistency.
Additionally, it prepares for the discussions in Sections \ref{SEC:Internested property of the $rsc$-juliennes} and \ref{Section: Density of rsc-julienne}, which complete the proof of Lemma \ref{lemma:A0u-sat}.

Recall that since $f$ is center bunched we have positive functions $M_f \to \bbR$ which satisfy
\begin{equation*}
\lambda^s<\lambda^c\cdot\hat\lambda^c\quad\text{and}\quad\lambda^u<\lambda^c\cdot\hat\lambda^c.
\end{equation*}
Burns and Wilkinson use the first inequality, while we will use the second. This is because we show $u$-saturation instead of $s$-saturation.
By the second inequality, we can choose functions $\tau,\delta\colon M_f\to\R$ such that
\begin{equation}
\label{eq:def tau and delta}
    \lambda^u<\tau<\delta\cdot\hat \lambda^c\quad \text{and}\quad \delta<\min\{\lambda^c,1\}.
\end{equation}

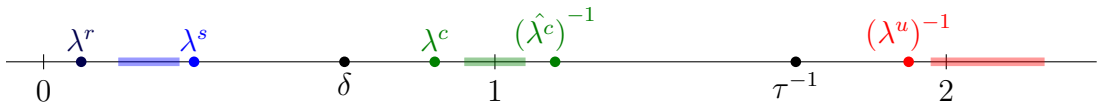
\begin{figure}[ht]
    \centering
  \begin{tikzpicture}
    \draw (-0.5,0) -- (14,0);
\foreach \i in {0,...,2} 
      \draw (6*\i,0.1) -- + (0,-0.2) node[below] {$\i$}; 
\fill[blue!30!black] (.5,0) circle (2pt) node[above] {$\lambda^r$};
\fill[blue] (2,0) circle (2pt) node[above] {${\lambda^s}$};
\fill[green!50!black] (5.2,0) circle (2pt) node[above] {$\lambda^c$};
\fill[green!50!black] (6.8,0) circle (2pt) node[above] {${(\hat{\lambda^c})}^{-1}$};
\fill[red] (11.5,0) circle (2pt) node[above] {${(\lambda^u)}^{-1}$};
\filldraw [fill=blue, draw=blue, opacity=0.4] (1,-0.05) rectangle ++(0.8,0.1); 
\filldraw [fill=red, draw=red, opacity=0.4] (11.8,-0.05) rectangle ++(1.5,0.1);
\filldraw [fill=green!50!black, draw=green!50!black, opacity=0.4] (5.6,-0.05) rectangle ++(0.8,0.1); 
\fill[black] (10,0) circle (2pt) node[below] {$\tau^{-1} $};
\fill[black] (4,0) circle (2pt) node[below] {$\delta$};
  \end{tikzpicture}
\caption{Comparison of \(\lambda^r, \lambda^s, \delta, \lambda^c, \hat{\lambda}^c, \tau\), and \(\lambda^u\), illustrating how the center bunching condition ensures that \(\tau\) and \(\delta\) can be chosen to maintain the necessary inequalities in (\ref{eq:def tau and delta}).}
\end{figure}
Define $N^{scu} \coloneqq \bigsqcup_{j \geq 0} \mathcal{W}^{scu}({\sigma^{-j}p}, r)$, as the disjoint union of the balls $\mathcal{W}^{scu}({\sigma^{-j}p}, r)$ for $j \geq 0$, with $r$ given by Proposition \ref{prop:fakefoln}.
Recall that we rescaled the metric to assume that $r \gg 1$.
To simplify the notation, we omit the dependence on $p$ for the definition of the fake invariant foliations in $N^{scu}$.
Let $\widehat{\mathcal{W}}^u$ be the locally invariant foliation of $N^{scu}$ satisfying the following condition.
Similarly, we define foliations $\widehat{\mathcal{W}}^{s}$, $\widehat{\mathcal{W}}^{c}$, $\widehat{\mathcal{W}}^{cu}$, and $\widehat{\mathcal{W}}^{sc}$.

Burns and Wilkinson's argument for $s$-saturation takes place inside a set like $N^{scu}$.
However, because we are working on the space of orbits, we must account for the $r$-direction.
Define 
$$N\coloneqq \bigsqcup_{j \geq 0} \mathcal{W}^r\loc(\mathcal{W}^{scu}({\sigma^{-j}p}, r)).$$
Note that $N^{scu} \subset N$, so we still have the fake invariant foliations defined on $N^{scu}$.
Everything we do in the rest of this paper takes place within $N$.

For the remainder of the paper we will evaluate cocycles at the point \(p\)
and so we drop the dependence on \(p\) from the notation;
thus, if \(\alpha\) is a cocycle, then \(\alpha_n(p)\) will be abbreviated to \(\alpha_n\).

Following Burns and Wilkinson, we define the following sequences of sets.
For each $x \in \mathcal{W}^u(p,1)$, define
$$\widehat{B}^c_n(x)\coloneqq \widehat{\mathcal{W}}^c(x,\delta_n). $$
For $y\in \bigcup_{x\in \mathcal{W}^u(p,1)} \widehat{B}^c_n(x)$, define
$$\widehat{J}^s_n(y) \coloneqq \sigma^n(\widehat{\mathcal{W}}^s(\sigma^{-n}y,\tau_n))
\quad \text{and}\quad
J^s_n(y)\coloneqq \sigma^n(\mathcal{W}^s(\sigma^{-n}y,\tau_n)).$$
To define the $r$-juliennes, first define
\(
    \mathcal{W}^r(y, \tau_n) =
    \{ z \in \mathcal{W}^r\loc(y) : d(y,z) \le \tau_n \}.
\)
Then, $J^r_n(y)\coloneqq \sigma^n(\mathcal{W}^r(\sigma^{-n}y,\tau_n))$.
Since $d(\sig(y), \sig(z)) = \lambda^r d(y, z)$ for $y \in \mathcal{W}^r\loc(z)$,
this julienne can also be written as $J^r_n(y) = \mathcal{W}^r(y,\lam^r_n\tau_n)$.
The \textit{$sc$-julienne} and the \textit{$rsc$-julienne} of order $n$ centred at $x$
are defined as
\[
    \widehat{J}^{sc}_{n}(x)\coloneqq
    \bigcup_{y\in \widehat{B}^c_n(x)} \widehat{J}^s_n(y)
    \quad\text{and}\quad
    \widehat{J}^{rsc}_{n}(x) \coloneqq 
    \bigcup_{y\in \widehat{J}^{sc}_{n}(x)}J^r_n(y).
\]
The sets $\widehat{J}^{sc}_{n}(x)$ are defined analogously to the center-unstable juliennes introduced by Burns and Wilkinson. 
In our setting, we must also consider the $r$-direction, and hence we need to define the $rsc$-julienne.

The sets \(\widehat{J}^{rsc}_{n}(x)\) are contained in \(\widehat{\mathcal{W}}^{rsc}\loc(x)=\mathcal{W}^r\loc(\widehat{\mathcal{W}}^{sc}(x))\), where we have defined the measure \(\widehat{\nu}^{rsc}\).
A point $x \in M_f$ is an \textit{$rsc$-julienne density point} of a measurable set $X$ if
$$\lim_{n\to \infty}\widehat{\nu}^{rsc} (X :\widehat{J}^{rsc}_{n}(x) )=1, \quad 
\text{where} \quad \widehat{\nu}^{rsc}(X: \widehat{J}^{rsc}_{n}(x)  )= 
\frac{\widehat{\nu}^{rsc} (X \cap \widehat{J}^{rsc}_{n}(x)  )}{\widehat{\nu}^{rsc} ( \widehat{J}^{rsc}_{n}(x)  )}.$$

As defined in Section \ref{Sec: The main theorem},
$A_0 = \pi(A^{rs})$ where $A^{rs} \subset M_f$ is a measurable set that is both $rs$-saturated and essentially $u$-saturated. 
Further, $A^u\subset M_f$ is the {$u$-saturate} of $A^{rs}$, that is, $A^u$ is $u$-saturated and $ \nu(A^u \triangle A^{rs}) = 0$.
Here we state Proposition \ref{Prop:UnstableHolonomyBounds} and Lemma \ref{juldens}, that will be proved in Sections \ref{SEC:Internested property of the $rsc$-juliennes} and \ref{Section: Density of rsc-julienne}, respectively.

\begin{prop}\label{Prop:UnstableHolonomyBounds}
There exists a positive integer $k$ such that for any $x,x'\in \mathcal{W}^u(p,1)$, 
the holonomy map
$h^u\colon \widehat{\mathcal{W}}^{rsc}(x)\to\widehat{\mathcal{W}}^{rsc}(x')$
induced by the unstable foliation $\mathcal{W}^u$ satisfies that for all $n\geq k$,
$$\widehat{J}^{rsc}_{n+k}(x') \subset  h^u(\widehat{J}^{rsc}_{n}(x)) \subset \widehat{J}^{rsc}_{n-k}(x').$$
\end{prop}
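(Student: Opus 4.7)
The plan is to exploit the product structure of $\widehat{\mathcal{W}}^{rsc}(x) \cong \mathcal{W}^r\loc(x) \ti \widehat{\mathcal{W}}^{sc}(x)$ to reduce the claim to an internesting statement for $sc$-juliennes inside a single $scu$-sheet, which is then the direct analog (with the roles of $s$ and $u$ swapped) of the internesting estimate in \cite[Section 4]{BW10}. First, using that $J^r_n(y) = \mathcal{W}^r(y, \lam^r_n \tau_n)$ has the same radius in every $r$-fiber, together with Lemma \ref{lemma:fibermetricinvariance} which says $scu$-holonomies act as isometries between $r$-fibers, the $rsc$-julienne decomposes as
\[
    \widehat{J}^{rsc}_n(x) \ \cong \ \mathcal{W}^r(x, \lam^r_n \tau_n) \ \ti \ \widehat{J}^{sc}_n(x).
\]
Second, I would show that under the product identification, $h^u$ acts as the identity on the first factor and as a family of $u$-holonomies on the second factor, parametrized by $\tilde y \in \mathcal{W}^r\loc(x)$. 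The identity action on the $r$-factor follows because the unstable manifold through any $q \in \widehat{\mathcal{W}}^{rsc}(x)$ stays inside the single $scu$-sheet $\mathcal{W}^{scu}(q)$, so the $r$-coordinate (which records which sheet $q$ lies in) is unchanged. For each fixed $\tilde y$, the map on the $sc$-factor is the $u$-holonomy inside $\mathcal{W}^{scu}(\tilde y)$, transported to $\widehat{\mathcal{W}}^{sc}(x)$ via the sheet-invariance of the fake $sc$-foliation from Proposition \ref{prop:fakefoln}\ref{Lemma_viii:Endos}.

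The problem thus reduces to producing a single integer $k \geq 1$, uniform over $\tilde y$, such that
\[
    \widehat{J}^{sc}_{n+k}(x') \ \subof \ h^u_{\tilde y}(\widehat{J}^{sc}_n(x)) \ \subof \ \widehat{J}^{sc}_{n-k}(x')
\]
for every $n \geq k$, where $h^u_{\tilde y}$ is the $u$-holonomy inside the sheet $\mathcal{W}^{scu}(\tilde y)$. This is the single-sheet analog of the internesting estimate of \cite{BW10}, with $s$ and $u$ swapped. The Burns--Wilkinson argument rests on three ingredients, all of which carry over uniformly over $M_f$: the second center-bunching inequality $\lam^u < \lam^c \hat\lam^c$, built into our choice $\lam^u < \tau < \delta \hat\lam^c$; the $C^1$ regularity of the unstable holonomy inside fake $sc$-leaves from Proposition \ref{prop:fakefoln}\ref{Lemma_vii:Regularity_Strong_Foliation}; and the exponential growth estimates of Proposition \ref{prop:fakefoln}\ref{Prop_iii: Exponential growth bounds at local scales}. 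Since all estimates are uniform over the compact space $M_f$ and the restriction of $\sig$ to each orbit of $scu$-sheets behaves like a $C^2$ partially hyperbolic, center-bunched diffeomorphism, the Burns--Wilkinson proof adapts directly and produces a single $k$ independent of $\tilde y$.

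The main obstacle is this single-sheet internesting estimate, which requires a careful iterative bookkeeping of the exponential rates of $\tau$ and $\delta$ under pullback by $\sig^{-n}$ and a quantitative bound on the norm of the derivative of the $u$-holonomy inside a fake $sc$-leaf; center bunching is crucial here to control this derivative. The reductions carrying the statement from $sc$-juliennes on a single sheet up to $rsc$-juliennes are, by contrast, elementary consequences of the product structure of $\widehat{\mathcal{W}}^{rsc}(x)$ and the sheet-invariance of the fake $sc$-foliation, and once the single-sheet estimate is in hand the internesting for $\widehat{J}^{rsc}_n$ follows by combining it with the trivial identity action on the $r$-factor.
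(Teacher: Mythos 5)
The product decomposition $\widehat{J}^{rsc}_n(x) \cong \mathcal{W}^r(x,\lam^r_n\tau_n) \times \widehat{J}^{sc}_n(x)$ is correct, and your observation that $h^u$ acts as an isometry (hence trivially) on the $r$-factor via Lemma \ref{lemma:fibermetricinvariance} is a genuine and clean insight. However, the reduction of the $sc$-factor to ``the single-sheet analog of the internesting estimate of \cite{BW10}'' conceals a gap. For $\tilde y \neq x$ in the $r$-fiber, the map $z \mapsto z'$ on the $sc$-factor is a $u$-holonomy that lives in the sheet $\mathcal{W}^{scu}(\tilde y)$, not in the sheet of $p$. The julienne $\widehat{J}^{sc}_n(x)$, by contrast, is built from the cocycles $\tau_n(p), \delta_n(p)$ and the fake center foliation $\widehat{\mathcal{W}}^c_p$, all of which depend on the \emph{backward} orbit of $p$. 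Proposition \ref{prop:fakefoln}\ref{Lemma_viii:Endos} only gives sheet-invariance for the $s$- and $sc$-fake leaves, not for $\widehat{\mathcal{W}}^c$, and the cocycle values at $\tilde y$ and at $p$ do not agree. So a Burns--Wilkinson argument run inside $\mathcal{W}^{scu}(\tilde y)$ internests juliennes built from the parameters $\tau_n(\tilde y), \delta_n(\tilde y), \widehat{\mathcal{W}}^c_{\tilde y}$, which are not a priori comparable to the sets $\widehat{J}^{sc}_n(x), \widehat{J}^{sc}_n(x')$ that the proposition is about. Appealing to ``all estimates are uniform over the compact space $M_f$'' does not close this: uniform continuity of the functions $\tau, \delta, \lam^u$ is not enough to control a cocycle over $n$ steps when the orbits being compared can diverge.

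The comparison can in fact be made, but only because the $r$-radius $\lam^r_n\tau_n$ of the julienne shrinks \emph{strictly faster} than $\tau_n$: for $\tilde y \in J^r_n(x)$ the backward orbits of $\tilde y$ and $x$ stay within $O(\tau_n)$ of each other for $n$ iterates, which is precisely what makes the cocycle and fake-foliation comparisons go through. You never mention this decay-rate disparity, and without it the claim ``uniform over $\tilde y$'' is false. This is the technical content that the paper's Lemma \ref{Lemma32} handles directly: rather than factoring into $r \times sc$ and treating the $sc$-part as a black box, the paper factors the julienne as $c \times rs$, applies Lemma \ref{Lemma31} to the center factor, and proves a metric estimate for the $rs$-factor by tracking distances $d(\sigma^{-n}\cdot,\sigma^{-n}\cdot)$; the genuinely new non-invertible step is bounding $d(\sigma^{-n}w', \sigma^{-n}y') < \tau_n$ via the $scu$-holonomy isometry of Lemma \ref{lemma:fibermetricinvariance}. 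Your proposal uses that lemma only to set up the product structure, and then attempts to wall off the $r$-direction; the paper instead carries the $r$-direction through the dynamics, which is where the work actually lies.
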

Not only are the sequences $h^u(\widehat{J}^{rsc}_{n}(x))$ and $\widehat{J}^{rsc}_{n-k}(x')$ internested, but $k$ does not depend on the choice of $x$ and $x'$. 

\begin{lema} \label{juldens}
    A point $x$ in $M_f$ is an $rsc$-julienne density point of $A^u$ if and only if $\pi(x)$ is a Lebesgue density point of $\pi(A^{rs})$.
\end{lema}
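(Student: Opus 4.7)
The plan is to reduce the biconditional to an $sc$-julienne density statement on $\widehat{\mathcal{W}}^{sc}_p(x)$, where the classical Burns--Wilkinson argument can be adapted.

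The first step exploits the product structure of $\widehat{J}^{rsc}_n(x)$. Using the identification $\widehat{\mathcal{W}}^{rsc}_p(x) \cong \mathcal{W}^r\loc(x) \times \widehat{\mathcal{W}}^{sc}_p(x)$ together with the $scu$-holonomy isometry (Lemma \ref{lemma:fibermetricinvariance}), I will verify that the $rsc$-julienne factors as $J^r_n(x) \times \widehat{J}^{sc}_n(x)$ under this identification, with $\widehat{\nu}^{rsc} = \nu^r \times \widehat{\nu}^{sc}$. Since $A^{rs} = \pi\inv(A_0)$ is $r$-saturated, its indicator depends only on the $sc$-coordinate in this product, so Fubini immediately collapses the $r$-integration to give
\[
    \widehat{\nu}^{rsc}(A^{rs} : \widehat{J}^{rsc}_n(x)) \;=\; \widehat{\nu}^{sc}(A^{rs} : \widehat{J}^{sc}_n(x)).
\]

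The second step connects the $sc$-julienne density to Lebesgue density in $M$. By Lemma \ref{Lemma:scu_density_equivalence}, $\pi(x)$ is a Lebesgue density point of $A_0$ if and only if $x$ is an $scu$-Lebesgue density point of $A^{rs}$ inside $\mathcal{W}^{scu}(x)$. An adaptation of the Burns--Wilkinson equivalence between Lebesgue density and center-stable julienne density inside the sheet (with the roles of $s$ and $u$ swapped to fit our $u$-saturation context) then gives the equivalence with $sc$-julienne density of $A^{rs}$ at $x$. The uniform estimates of Proposition \ref{prop:fakefoln} allow this classical argument to pass through verbatim in our setting.

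The third and most delicate step is the passage from $A^{rs}$ to $A^u$ in the $rsc$-julienne density. Since $\nu(A^u \triangle A^{rs}) = 0$, applying Fubini to the local decomposition $\nu \cong \widehat{\nu}^{rsc} \times \widehat{\nu}^u$ (arising because the fake $u$-foliation inside each $scu$-sheet is transverse to $\widehat{\mathcal{W}}^{rsc}_p$) shows that $\widehat{\nu}^{rsc}(A^u \triangle A^{rs}) = 0$ on $\widehat{\nu}^u$-a.e.\ transversal $rsc$-slice near $x$. To propagate this equality to the specific slice through $x$, I will combine the $u$-invariance of $A^u$ with the absolute continuity of the unstable holonomy between $rsc$-transversals (a straightforward extension of the proposition preceding this lemma), together with a Lebesgue-differentiation-type argument along the $u$-direction to control how the $s$- and $r$-saturated set $A^{rs}$ transforms under such holonomies.

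The main obstacle will be this third step. The naive Fubini argument only yields the required $\widehat{\nu}^{rsc}$-a.e.\ identity on a full-measure set of transversal slices, rather than at every $x$, so the upgrade to every $x$ genuinely uses the interplay between the $u$-saturation of $A^u$ (which is preserved under unstable holonomy) and the transverse absolute continuity of the holonomy (which preserves $\widehat{\nu}^{rsc}$-null sets). The $s$- and $r$-saturation of $A^{rs}$ must be carefully exploited to argue that $A^{rs}$ itself is stable under infinitesimal $u$-displacements in a $\widehat{\nu}^{rsc}$-measurable sense, thereby closing the equivalence at every point $x$ rather than almost every point.
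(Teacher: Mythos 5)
Your step 1 — factoring $\widehat J^{rsc}_n(x)$ as $J^r_n(x) \times \widehat J^{sc}_n(x)$ under the product identification via the $scu$-holonomy isometry of Lemma~\ref{lemma:fibermetricinvariance}, and using $r$-saturation of $A^{rs}$ to collapse the $r$-integration — is correct and is a genuine observation. The trouble is with steps 2 and 3, and the two gaps are really the same one: the ``bad sheet'' problem that the $r$-juliennes were introduced to circumvent.

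In step 2 you propose to run the Burns--Wilkinson chain of equivalences (with $s$ and $u$ swapped) inside the single sheet $\mathcal{W}^{scu}(x)$ to identify $sc$-julienne density of $A^{rs}$ with Lebesgue density of $A_0$. But the last link of that chain — passing from $G^{scu}_n(x)$ to $\widehat J^{sc}_n(x)$ — requires $A^{rs}$ to be essentially $u$-saturated \emph{with respect to $\nu^{scu}$ inside the sheet $\mathcal{W}^{scu}(x)$}, i.e.\ the ``good sheet'' hypothesis of Lemma~\ref{Lemma:SCU_density_saturated}. That hypothesis is not available for the specific sheet through $x$: $\nu(A^{rs}\triangle A^u)=0$ only guarantees it for $\nu^r$-a.e.\ fiber above $x_0$, and the sheet containing $x$ may be one on which $\nu^{scu}\bigl((A^{rs}\triangle A^u)\cap\mathcal{W}^{scu}(x)\bigr)>0$. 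Moreover, after step 1 you have effectively re-collapsed the $r$-direction and are working on a single $sc$-leaf, which is again a $\nu$-null set on which the symmetric difference need not be controlled.

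Step 3 tries to repair this by propagating the Fubini-type identity $\widehat\nu^{rsc}(A^u\triangle A^{rs})=0$ from a.e.\ transversal slice to the specific slice through $x$ using unstable holonomy. This fails because $A^{rs}$ is \emph{not} $u$-invariant: if $h^u$ is the unstable holonomy from the slice through a good $q$ to the slice through $x$, then $h^u\bigl(A^u\cap\widehat{\mathcal{W}}^{rsc}(q)\bigr)=A^u\cap\widehat{\mathcal{W}}^{rsc}(x)$ by $u$-saturation, and absolute continuity preserves null sets, but $h^u\bigl(A^{rs}\cap\widehat{\mathcal{W}}^{rsc}(q)\bigr)$ has no reason to equal $A^{rs}\cap\widehat{\mathcal{W}}^{rsc}(x)$. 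Your remark that the $r$- and $s$-saturation of $A^{rs}$ ``must be carefully exploited'' to show $A^{rs}$ is stable under $u$-displacements is precisely the content that's missing, and in fact it is false in general — if it were true, the entire $\Jrsc_n$ construction in this paper would be unnecessary. The paper avoids both gaps by never fixing a slice while $A^{rs}$ is still in play: it builds the chain $B_n\Leftrightarrow E_n\Leftrightarrow F_n\Leftrightarrow G_n$ out of $\nu$-positive-measure sets, substitutes $A^u$ for $A^{rs}$ at the $G_n$ stage where $\nu(A^{rs}\triangle A^u)=0$ is directly applicable and regularity controls the null set, and only then disintegrates $G_n(x)=\bigcup_{q\in\widehat J^{rsc}_n(x)}\mathcal{W}^u(q,\delta_n)$ using the genuine $u$-saturation of $A^u$ and the measure-theoretic Proposition~2.7 of \cite{BW10}.
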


To prove Lemma \ref{lemma:A0u-sat}, we will use the following.

\begin{lema} \label{Matrix24}
    The set of $rsc$-julienne density points of $A^u$ is $u$-saturated.
\end{lema}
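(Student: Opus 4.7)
The plan is to transfer $rsc$-julienne density from $x$ to $x'$ via the unstable holonomy $h^u\colon \widehat{\mathcal{W}}^{rsc}(x)\to\widehat{\mathcal{W}}^{rsc}(x')$, using Proposition \ref{Prop:UnstableHolonomyBounds} to compare the juliennes at the two endpoints. Because any two points on a common global unstable leaf can be joined by a finite chain $x=x_0,x_1,\dots,x_\ell=x'$ with each consecutive pair contained in a common $\mathcal{W}^u(p_i,1)$, it is enough to prove the local statement: if $x,x'\in\mathcal{W}^u(p,1)$ and $x$ is an $rsc$-julienne density point of $A^u$, then so is $x'$.

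The two key ingredients are $u$-saturation and absolute continuity. First, because $A^u$ is $u$-saturated and $h^u$ moves each point along its own unstable leaf, we have $h^u(A^u\cap S)=A^u\cap h^u(S)$ for every $S\subset \widehat{\mathcal{W}}^{rsc}(x)$. Second, $h^u$ should have bounded Jacobian with respect to $\widehat{\nu}^{rsc}$. Since unstable leaves lie inside $scu$-sheets, $h^u$ preserves the $\mathcal{W}^r\loc$-fibration of $\widehat{\mathcal{W}}^{rsc}(x)$; on each fiber its restriction is the unstable holonomy between two fake $sc$-leaves, and the absolute continuity statement already established for such holonomies provides a constant $C$ that is uniform over $M_f$ (and in particular independent of the fiber). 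Feeding this fiberwise estimate through Fubini using the product decomposition $\widehat{\nu}^{rsc}=\nu^r\times\widehat{\nu}^{sc}$ yields
\[
\tfrac{1}{C}\,\widehat{\nu}^{rsc}(S)\;\le\;\widehat{\nu}^{rsc}(h^u(S))\;\le\;C\,\widehat{\nu}^{rsc}(S)
\]
for every $\widehat{\nu}^{rsc}$-measurable $S\subset \widehat{\mathcal{W}}^{rsc}(x)$.

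With these facts the density transfer is short. Writing $B=M_f\setminus A^u$ and applying the bounds to both $B\cap \widehat{J}^{rsc}_n(x)$ and $\widehat{J}^{rsc}_n(x)$ gives
\[
\widehat{\nu}^{rsc}\!\bigl(B:h^u(\widehat{J}^{rsc}_n(x))\bigr)\;\le\;C^2\,\widehat{\nu}^{rsc}\!\bigl(B:\widehat{J}^{rsc}_n(x)\bigr)\;\longrightarrow\;0,
\]
so $A^u$ has $\widehat{\nu}^{rsc}$-density one in $h^u(\widehat{J}^{rsc}_n(x))$. By Proposition \ref{Prop:UnstableHolonomyBounds} the sequences $\{h^u(\widehat{J}^{rsc}_n(x))\}$ and $\{\widehat{J}^{rsc}_n(x')\}$ are internested, and the $rsc$-juliennes form a regular sequence (a fact to be recorded in the spirit of \cite{BW10}), so Lemma \ref{comparablesequences} propagates the density conclusion to the juliennes at $x'$, which is what we need.

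The main obstacle is justifying the bounded-Jacobian estimate for $h^u$ on the product space: one must check carefully that the restriction of $h^u$ to each $\mathcal{W}^r\loc$-fiber really is an unstable holonomy between fake $sc$-leaves falling under the earlier absolute continuity proposition with a uniform constant, so that Fubini legitimately upgrades the fiberwise estimate to the product measure $\widehat{\nu}^{rsc}$. Once that is settled, the remainder is a routine density manipulation and an application of Lemma \ref{comparablesequences}.
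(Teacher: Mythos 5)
Your proposal follows essentially the same route as the paper's proof: transfer density along the unstable holonomy $h^u$ using its bounded Jacobian with respect to $\widehat{\nu}^{rsc}$, use $u$-saturation of $A^u$ to identify $h^u(A^u)$ with $A^u$, invoke Proposition \ref{Prop:UnstableHolonomyBounds} for internesting, and finish with Lemma \ref{comparablesequences}. The only difference is that you spell out the Fubini argument that upgrades the fiberwise $sc$-holonomy estimate to the product measure $\widehat{\nu}^{rsc}$, a step the paper leaves implicit; this is a worthwhile clarification but not a different proof.
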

\begin{proof}
    Let $x,x'\in \mathcal{W}^u(p,1)$ and $h^u\colon \widehat{\mathcal{W}}^{rsc}(x)\to\widehat{\mathcal{W}}^{rsc}(x')$ be
    the holonomy map
    induced by the unstable foliation $\mathcal{W}^u$. Then transverse absolute continuity of $h^u$ with bounded Jacobians implies that
    $$\lim_{n\to \infty}  \widehat{\nu}^{rsc} (A^u:\widehat{J}_n^{rsc}(x) ) =1 \iff 
    \lim_{n\to \infty} \widehat{\nu}^{rsc} (h^u(A^u): h^u (\widehat{J}_n^{rsc}(x)) ) =1.$$
    Note that $ \widehat{\nu}^{rsc} (h^u(A^u): h^u (\widehat{J}_n^{rsc}(x)) )= \widehat{\nu}^{rsc} (A^u: h^u (\widehat{J}_n^{rsc}(x)) )$, since $A^u$ is $u$-saturated.

    Working inside of $\widehat{\mathcal{W}}^{rsc}(x')$ and with respect to the measure $\widehat{\nu}^{rsc} $, we will apply Lemma \ref{comparablesequences} to the sequences $h^u (\widehat{J}_n^{rsc}(x))$ and $\widehat{J}_n^{rsc}(x') $.
    Proposition \ref{Prop:UnstableHolonomyBounds} implies that the sequences $h^u (\widehat{J}_n^{rsc}(x))$ and $\widehat{J}_n^{rsc}(x') $ are internested. 
    Lemma \ref{comparablesequences} now tells us that 
    $$
    \lim_{n\to \infty} \widehat{\nu}^{rsc} (A^u: h^u (\widehat{J}_n^{rsc}(x)) ) =1
    \iff 
    \lim_{n\to \infty}  \widehat{\nu}^{rsc} (A^u:\widehat{J}_n^{rsc}(x') ) =1 ,
    $$
    the result follows from this.
\end{proof}

Now we can show the $u$-saturation of the set of Lebesgue density points of $A_0$,
which easily follows from Lemma \ref{juldens} and Lemma \ref{Matrix24}.
\begin{proof}[Proof of Lemma \ref{lemma:A0u-sat}]
Let $x_0,x_0'\in M$ be such that $x_0, x'_0 \in \pi(\mathcal{W}^u(p,1))$ and $x_0$ is a Lebesgue density point of $A_0$.
Then there exists $x,x' \in \mathcal{W}^u(p,1)\subset M_f$ satisfying $\pi(x)=x_0$ and $\pi(x')=x_0'$.

Since $\nu(A^u\triangle A^{rs})=0$, Lemma \ref{juldens} implies that the $rsc$-julienne density points of $A^u$ are precisely the points that project down to the Lebesgue density points of $A_0$.
Since $x$ projects down to a Lebesgue density point of $A_0$ we see that $x$ is an $rsc$-julienne density point of $A^u$.
From Lemma \ref{Matrix24} it follows that $x'$ is an $rsc$-julienne density point of $A^u$. Then applying Lemma \ref{juldens} again we see that $x_0'$ is a Lebesgue density point of $A_0$.
Thus, the set of Lebesgue density points of $A_0$ is $u$-saturated.
\end{proof}

\section{Internested property of the \textit{rsc}-juliennes}
\label{SEC:Internested property of the $rsc$-juliennes}
We adapt the proof from \cite[Section 6]{BW10} to establish Proposition \ref{Prop:UnstableHolonomyBounds}.
Recall that
$h^u\colon \widehat{\mathcal{W}}^{rsc}(x)\to\widehat{\mathcal{W}}^{rsc}(x')$ 
is
the holonomy map
induced by the unstable foliation $\mathcal{W}^u$.
It suffices to show that we can choose a positive integer $k$ such that
\begin{equation} \label{eq:HolonomyBound}
    h^u(\widehat J_n^{rsc}(x)) \subset \widehat J_{n-k}^{rsc}(x')
\end{equation}
for all $n\geq k$, with $k$ being independent of $x,x'\in\mathcal{W}^u(p,1)$.
In \cite{BW10}, Burns and Wilkinson worked with the stable holonomy and the fake stable holonomy. Here, we adapt their approach to the unstable holonomy and the fake unstable holonomy. 

\begin{lema}\label{Lemma31}
    There exists a positive integer $k_1$ such that the following holds for every integer $n\geq k_1$,
    \[
    \widehat{h}^u(\widehat{B}^c_n(x)) \subset 
    \widehat{B}^c_{n-k_1}(x').
    \]
    for all $x,x'\in \mathcal{W}^s(p)$,  where $\widehat{h}^u\colon \widehat{\mathcal{W}}^c\loc(x)\to \widehat{\mathcal{W}}^c(x')$ is the local $\widehat{\mathcal{W}}^u$ holonomy.
\end{lema}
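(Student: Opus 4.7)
The plan is to prove the inclusion directly by establishing that $\widehat{h}^u$, restricted to the fake center leaf through $x$, is a uniformly Lipschitz map into the fake center leaf through $x'$, and then absorbing the resulting Lipschitz constant into the cocycle $\delta$ at the cost of a uniform number $k_1$ of iterates. Note that as stated the hypothesis reads $x,x'\in\mathcal{W}^s(p)$; given that $\widehat{h}^u$ is the \emph{unstable} fake holonomy and that the surrounding discussion of Proposition \ref{Prop:UnstableHolonomyBounds} works with $x,x'\in\mathcal{W}^u(p,1)$, I read this as a typo for $x,x'\in\mathcal{W}^u(p,1)$ and proceed with that assumption.

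First, by property \ref{Prop_v: Uniqueness} of Proposition \ref{prop:fakefoln}, $\mathcal{W}^u(p,1)$ coincides (after projection) with a piece of the fake unstable leaf $\widehat{\mathcal{W}}^u_p(p)$, so both $x$ and $x'$ lie on a common fake unstable leaf and $x'=\widehat{h}^u(x)$. Consequently, $\widehat{\mathcal{W}}^c(x)$ and $\widehat{\mathcal{W}}^c(x')$ both sit inside the common fake center-unstable leaf through $x$. The key step is to show that $\widehat{h}^u\colon \widehat{\mathcal{W}}^c\loc(x)\to \widehat{\mathcal{W}}^c(x')$ is Lipschitz with a constant $C>0$ that is uniform over all orbits $p\in M_f$ and all pairs $x,x'\in \mathcal{W}^u(p,1)$. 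By property \ref{Lemma_vii:Regularity_Strong_Foliation} of Proposition \ref{prop:fakefoln} (which applies because $f$ is $C^2$ and center bunched), each fake $cu$-leaf is $C^1$-foliated by fake $u$-leaves, so $\widehat{h}^u$ is $C^1$. The uniform $C^1$-bounds implicit in the graph-transform construction together with the compactness of $M_f$ yield the desired uniform $C$. For $y\in \widehat{B}^c_n(x)$ with $y'=\widehat{h}^u(y)$, this gives
\[
    d(x',y') \;=\; d(\widehat{h}^u(x),\widehat{h}^u(y)) \;\le\; C\, d(x,y) \;\le\; C\delta_n.
\]

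To conclude, I use that $\delta\colon M_f\to\mathbb{R}$ is continuous with $\delta<1$ on the compact space $M_f$, so there exists $\delta_{\max}<1$ with $\delta(q)\le\delta_{\max}$ for all $q\in M_f$. From the cocycle identity $\delta_n=\delta_{n-k_1}\cdot\delta_{k_1}(\sigma^{n-k_1}p)$ we obtain $\delta_n\le \delta_{\max}^{k_1}\,\delta_{n-k_1}$. Choosing the integer $k_1$ large enough that $C\delta_{\max}^{k_1}\le 1$ (which depends only on $C$ and $\delta_{\max}$, not on $n$, $p$, or the points $x,x'$) then gives $C\delta_n\le \delta_{n-k_1}$, so $y'\in \widehat{B}^c_{n-k_1}(x')$ for every $n\ge k_1$.

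The main obstacle is verifying the uniform Lipschitz constant $C$, i.e., checking that the $C^1$-norms of the fake $u$-subfoliations inside fake $cu$-leaves are bounded uniformly across $M_f$ and across the allowed pairs $x,x'$. In the diffeomorphism setting this uniformity is built into the graph-transform estimates of the Burns--Wilkinson construction; the non-invertible case presents no additional difficulty because, as noted in the proof of Proposition \ref{prop:fakefoln}, the fake foliations are built by applying the usual construction to the map $\widehat{F}_r$ on the vector bundle $E$ over the compact space $M_f$, and all estimates are therefore uniform in $p$.
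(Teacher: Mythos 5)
Your proof is correct and follows essentially the same route as the paper's one-line proof, which simply asserts that Proposition \ref{prop:fakefoln} implies $\widehat{h}^u$ is Lipschitz and that the result follows; you have just filled in the supporting details (part \ref{Lemma_vii:Regularity_Strong_Foliation} as the source of the $C^1$, hence Lipschitz, regularity of the $u$-subfoliation inside $cu$-leaves, uniformity of the constant from compactness of $M_f$, and the cocycle estimate $\delta_n\le\delta_{\max}^{k_1}\delta_{n-k_1}$ to absorb $C$). You also correctly flag that $\mathcal{W}^s(p)$ in the hypothesis appears to be a typo for $\mathcal{W}^u(p,1)$, consistent with how the lemma is invoked in the proof of Proposition \ref{Prop:UnstableHolonomyBounds}.
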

\begin{proof}
    This is an adaptation of Lemma 6.1 in \cite{BW10}.
    Our Proposition \ref{prop:fakefoln} implies $\widehat{h}^u$ is Lipschitz
    and
    the result follows from this.
\end{proof}

\begin{figure}[ht]
    \centering
\begin{tikzpicture}

\draw[name path=trueunstable_p,red, thick] (-1,-1) .. controls +(1,-0.6) and  +(-1,0.5) .. node[midway] (p) {} (10,-1.6) node[near start] (x) {} node[near end] (xprime) {} node[pos=.1,sloped, below] {$\mathcal{W}^u(p,1)$};

\coordinate (q) at ($(x)+(0,1)$) ;
\coordinate (qprime) at ($(xprime)+(0,1)$) ;
\draw[color=green!50!black,thick] (x.center) 
--
(q) node[midway, left] {$\widehat{\mathcal{W}}^c(x)$};
\draw[name path=fake, magenta,thick] (q) .. controls +(1,-0.2) and  +(-1,0.2).. (qprime) node[midway, above,sloped] {$\widehat{\mathcal{W}}^u$};
\draw[|->, magenta,thick] ($(q)+(0,1)$) .. controls +(1,-0.2) and  +(-1,0.2).. ($(qprime)+(0,1)$) node[midway, above,sloped] {$\widehat{h}^u$};
\draw[name path=fakecenter_xprime,color=green!50!black,thick] (xprime.center) 
--
(qprime) node[midway, right] {$\widehat{\mathcal{W}}^c(x')$};




\node [anchor=north] at (x) {$x$};
\node [anchor=north] at (xprime) {$x'$};

\node[circle,inner sep=1.5pt,fill=black] at (p) {}; 
\node[below] at (p) {$p$};
\fill (xprime) circle [radius=2pt];
\fill (x) circle [radius=2pt];
\end{tikzpicture}
    \caption{
    The set $\mathcal{W}^u(p,1)$ is a subset of $\widehat{\mathcal{W}}^u(p)$.
    Thus, in this figure, $\mathcal{W}^u(p,1)$ depicts both the unstable leaf and the fake unstable leaf through $p$.
    Moreover,  $\widehat{h}^u(x)=x'$.
    }
\end{figure}
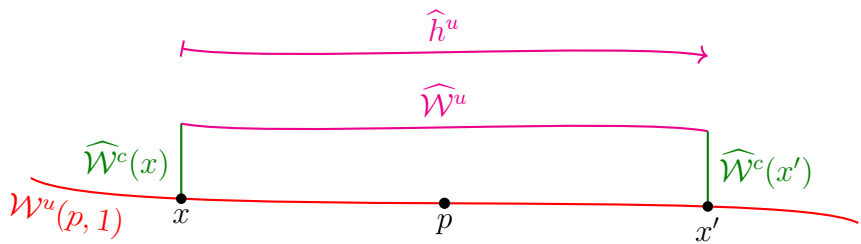
In \cite[Subsection 4.2]{BW10}, Burns and Wilkinson introduce thin neighbourhoods of $\mathcal{W}^s(p,1)$, which are a sequence of sets $S_n$ containing $\mathcal{W}^s(p,1)$.
In our proof, we swap the roles of $s$ and $u$.
Hence, we define the \textit{thin neighbourhoods of $\mathcal{W}^u(p,1)$} by
\[ 
U_n \coloneqq \bigcup_{q\in \mathcal{W}^u(p,1)} \widehat{B}^c_n(q) . 
\]
We also define a sequence of juliennes in the stable directions $r$ and $s$ by
\[
\widehat{J}^{rs}_n(x)\coloneqq \bigcup_{q\in \widehat{J}^{s}_{n}(x)}J^r_n(q).
\]

\begin{lema}\label{Lemma32}
There exists a positive integer $k_2$ such that the following holds for every integer $n\geq k_2 $. 
Suppose $q\in U_n$, $q' \in U_{n-{k_1}}$, and $q'\in \widehat{\mathcal{W}}^u(q)$.
Let $y\in \widehat{J}^{rs}_n(q)$, and let $y'$ be the image of $y$ under the $\mathcal{W}^u$ holonomy from $\widehat{\mathcal{W}}\loc^{rsc}(q)$ to $\widehat{\mathcal{W}}^{rsc}(q')$.

Then $y'\in\widehat{J}^{rs}_{n-k_2}(z')$ for some $z'\in \widehat{B}^c_{n-k_2}(q')$. 
\end{lema}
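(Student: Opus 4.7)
The plan is to adapt the proof of Lemma~6.2 of \cite{BW10}, with the roles of stable and unstable swapped and with an extra layer accounting for the $r$-direction. As in \cite{BW10}, the argument is carried out after pulling everything back by $\sigma^{-n}$. In the pulled-back picture, $\sigma^{-n}y$ sits at $\widehat{\mathcal{W}}^s$-distance at most $\tau_n$ followed by $\mathcal{W}^r\loc$-distance at most $\tau_n$ from $\sigma^{-n}q$, while $d(\sigma^{-n}q,\sigma^{-n}q') \le C\lambda^u_{-n}(p)^{-1}$ by Proposition~\ref{prop:fakefoln}(iii)(b). This last quantity decays exponentially and, thanks to the inequality $\lambda^u<\tau$, is much smaller than $\tau_n$ for $n$ large.

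The next step is to understand how the true unstable holonomy $h^u$ acts on this configuration. The key geometric observation is that $\mathcal{W}^u$-leaves lie inside individual $scu$-sheets, and distinct $scu$-sheets correspond to distinct points of the Cantor fibre $\pi^{-1}(x_0)$; hence $h^u$ preserves the $\mathcal{W}^r\loc$-coordinate. In the remaining $sc$-directions, $h^u$ is uniformly Lipschitz-close to the fake unstable holonomy $\widehat h^u$ of Lemma~\ref{Lemma31}, with the discrepancy controlled by the unstable distance $C\lambda^u_{-n}(p)^{-1}$ between the base points.

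Given these observations, I would define $z'$ to be the unique point on $\widehat{\mathcal{W}}^c(q')$ to which $y'$ projects along the $\widehat{\mathcal{W}}^s$ and $\mathcal{W}^r\loc$ foliations. The centre displacement of $z'$ from $q'$ is controlled by the non-commutativity of the fake foliations across the fake unstable holonomy applied to a fake-$s$ ball of radius $\tau_n$; using Lemma~\ref{Lemma31} together with the Lipschitz estimates from Proposition~\ref{prop:fakefoln}, this displacement is of order $\tau_n$ on the pulled-back scale and so of order $\tau_n\hat\lambda^c_n$ after pushing forward by $\sigma^n$. Because $\tau<\delta\hat\lambda^c$, this is at most $\delta_{n-k_2}$ for $k_2$ sufficiently large, placing $z'$ in $\widehat B^c_{n-k_2}(q')$. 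The fake-$s$ and $r$ components of $y'$ relative to $z'$ inherit bounds of $\tau_n\le\tau_{n-k_2}$ (the $r$-bound being immediate from $r$-preservation), so $y'\in\widehat J^{rs}_{n-k_2}(z')$.

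The main obstacle is the careful bookkeeping of how holonomy distortion combines with the three different exponential scales, and the verification that $k_2$ can be chosen uniformly in $q$, $q'$, $y$, and $n$. This uniformity ultimately rests on the uniform $C^1$ estimates for the fake foliations from Proposition~\ref{prop:fakefoln}, the Lipschitz comparison between $h^u$ and $\widehat h^u$ coming from the dominated splitting, and the centre-bunching inequalities $\lambda^u<\tau<\delta\hat\lambda^c$ used to select $\tau$ and $\delta$ at the start of Section~\ref{Sec: Unstable saturation}.
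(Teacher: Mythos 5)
Your overall strategy matches the paper's: pull back by $\sigma^{-n}$, use $\lambda^u<\tau$ to make the unstable displacement of the base points negligible compared to $\tau_n$, observe that the $r$-component is controlled because holonomy along $\mathcal{W}^u$ stays inside a single $scu$-sheet (this is exactly what Lemma~\ref{lemma:fibermetricinvariance} packages for you), control the centre displacement at the pulled-back scale, and push forward. However, there are two concrete problems in the write-up.

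First, the exponent in your centre estimate is wrong. You claim the centre displacement of $z'$ from $q'$, of order $\tau_n$ at the scale of $\sigma^{-n}$, becomes of order $\tau_n\hat\lambda^c_n$ after applying $\sigma^n$. But applying $\sigma^n$ to a fake centre (or $sc$) leaf \emph{expands} distances by at most $(\hat\lambda^c_n)^{-1}$ (Proposition~\ref{prop:fakefoln}\ref{Prop_iii: Exponential growth bounds at local scales}), so the correct bound is $O\bigl((\hat\lambda^c_n)^{-1}\tau_n\bigr)$. The chosen inequality $\tau<\delta\hat\lambda^c$ then rearranges directly to $(\hat\lambda^c_n)^{-1}\tau_n<\delta_n<\delta_{n-k_2}$; as written, $\tau<\delta\hat\lambda^c$ does \emph{not} give $\tau_n\hat\lambda^c_n<\delta_{n-k_2}$, so the algebra in your last step does not close.

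Second, your sketch of the $sc$-estimate relies on the claim that $h^u$ is ``uniformly Lipschitz-close'' to the fake unstable holonomy $\widehat h^u$ with discrepancy controlled by the backward unstable distance. This comparison is not part of the paper's proof, and it is not obviously available: $h^u$ moves points along leaves of the true foliation $\mathcal{W}^u$ while $\widehat h^u$ moves points along leaves of $\widehat{\mathcal{W}}^u$, and these holonomies are between different pairs of transversals. What actually works, and what the paper does, is a direct metric argument at scale $-n$: combine $d(\sigma^{-n}y,\sigma^{-n}q)\le 2\tau_n$, $d(\sigma^{-n}q,\sigma^{-n}q')=O(\lambda^u_n)$, and $d(\sigma^{-n}y,\sigma^{-n}y')=O(\lambda^u_n)$ (the last two from Proposition~\ref{prop:fakefoln}\ref{Prop_iii: Exponential growth bounds at local scales}\ref{Prop_iii_b}) to get $d(\sigma^{-n}q',\sigma^{-n}y')=O(\tau_n)$, then invoke uniform transversality of $\widehat{\mathcal{W}}^s$ and $\widehat{\mathcal{W}}^c$ together with Lemma~\ref{lemma:fibermetricinvariance} for the $r$-component to locate $z'$ (defined at the pulled-back scale, not at scale $0$ as you do). Using $h^u$-versus-$\widehat h^u$ Lipschitz estimates here introduces a step that would need its own proof and is not needed.
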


\begin{figure}[ht]
    \centering
\begin{tikzpicture}
\draw[name path=trueunstable_p,red, thick] (-1,-1) .. controls +(1,-0.6) and  +(-1,0.5) .. node[midway] (p) {} (10,-1.6) node[near start] (x) {} node[near end] (xprime) {} node[pos=.1,sloped, below] {$\mathcal{W}^u(p,1)$};

\coordinate (q) at ($(x)+(0,1)$) ;
\coordinate (qprime) at ($(xprime)+(0,1)$) ;
\coordinate (qsec) at ($(xprime)+(0,2)$) ;
\draw[color=green!50!black,thick] (x.center) -- (q) node[midway, right] {$\widehat{\mathcal{W}}^c(x)$};
\draw[name path=fake, magenta,thick] (q) .. controls +(1,-0.2) and  +(-1,0.2).. (qprime) node[midway, above,sloped] {$\widehat{\mathcal{W}}^u$};
\draw[name path=fakecenter_xprime,color=green!50!black,thick] (xprime.center) -- (qprime) node[midway, right] {$\widehat{\mathcal{W}}^c(x')$};

\draw[cyan!50!black,thick] (q) .. controls +(0,0.2) and  +(.1,0) ..  (0,1) node[midway,left] {$\widehat{\mathcal{W}}^s$};

\draw[red, thick] (0,2) .. controls +(1,-0.3) and  +(-1,0.2) .. (7,2.5) node[midway, above,sloped] {$\mathcal{W}^u$};
\draw[|->,red, thick] (0,3) .. controls +(1,-0.3) and  +(-1,0.2) .. (7,3.5) node[midway, above,sloped] {$h^u$};

\draw[densely dashed, color=blue!30!black,thick] (0,2) -- (0,1) node[midway,left] {$\mathcal{W}^r$};

\fill (qprime) circle [radius=2pt];
\fill (q) circle [radius=2pt];
\node [anchor=north] at (x) {$x$};
\node [anchor=east] at (q) {$q$};
\node [anchor=north] at (xprime) {$x'$};
\node [anchor=west] at (qprime) {$q'$};

\node[circle,inner sep=1.5pt,fill=black] at (p) {}; 
\node[below] at (p) {$p$};
\fill (xprime) circle [radius=2pt];
\fill (x) circle [radius=2pt];
\coordinate (y) at (0,2);
\coordinate (y1) at (7,2.5);
\node[circle,inner sep=1.5pt,fill=black] at (y) {}; 
\node[circle,inner sep=1.5pt,fill=black] at (y1) {}; 
\node [above] at (y) {$y$};
\node [above] at (y1) {$y'$};
\draw[->,thick] (4.5,-2) -- (4.5,-3) node[midway,right] {$\sigma^{-n}$};
\begin{scope}[shift={(-1,2)}]

\draw[red, thick] (3,-6.5) .. controls +(.2,-0.1) and  +(-.2,0.1) .. (5,-6) node[midway, above,sloped] {$O(\lambda^u_n)$};
\draw[densely dashed, color=blue!30!black,thick] (3,-6.5) -- (3,-8.5) node[midway,right] {$<\tau_n$};
\draw[densely dashed, color=blue!30!black,thick] (5,-6) -- (5,-8) node[midway,right] {$<\tau_n$};
\draw[cyan!50!black,thick] (6,-10.5) .. controls +(0,0.2) and  +(.1,0) ..  (3,-8.5) node[midway,right] {$<\tau_n$};
\coordinate (sigmaz) at (7.5,-9);
\draw[cyan!50!black,thick] (sigmaz) .. controls +(0,0) and  +(.1,0) ..  (5,-8) node[pos=.4,above,sloped] {$O(\tau_n)$};
\coordinate (sigmaq1) at ($(sigmaz)+(0,-1)$);

\coordinate (sigmay1) at (5,-6);
\fill (sigmay1) circle [radius=2pt];
\node [right] at (sigmay1) {$\sigma^{-n}y'$};
\draw[magenta, thick] (6,-10.5) .. controls +(.1,-0.1) and  +(-.1,0.1) .. (sigmaq1) node[midway, below,sloped] {$O(\lambda^u_n)$};
\draw[green!50!black,thick] (sigmaq1) -- (sigmaz) node[midway,right] {$O(\tau_n)$};    
\coordinate (w1) at (5,-8);
\node [above right] at (w1) {$\sigma^{-n}w'$};
\fill (w1) circle [radius=2pt];
\fill (6,-10.5) circle [radius=2pt];
\node [left] at (6,-10.5) {$\sigma^{-n}q$};
\fill (3,-6.5) circle [radius=2pt];
\node [left] at (3,-6.5) {$\sigma^{-n}y$};
\fill (sigmaq1) circle [radius=2pt];
\node [below right] at (sigmaq1) {$\sigma^{-n}q'$};
\fill (sigmaz) circle [radius=2pt];
\node [right] at (sigmaz) {$\sigma^{-n}z'$};
\coordinate (w) at (3,-8.5);
\node [left] at (w) {$\sigma^{-n}w$};
\fill (w) circle [radius=2pt];
\end{scope}

\draw[->,thick] (4.5,-9) -- (4.5,-10) node[midway,right] {$\sigma^{n}$};

\begin{scope}[shift={(0,-12.5)}]
\draw[draw=none] (-1,-1) .. controls +(1,-0.6) and  +(-1,0.5) .. node[midway] (p) {} (10,-1.6) node[near start] (x) {} node[near end] (xprime) {};

\coordinate (q) at ($(x)+(0,1)$) ;
\coordinate (qprime) at ($(xprime)+(0,1)$) ;
\coordinate (qsec) at ($(xprime)+(0,2)$) ;
\draw[name path=fake, magenta,thick] (q) .. controls +(1,-0.2) and  +(-1,0.2).. (qprime);
\draw[name path=fakecenter_xprime,color=green!50!black,thick] (qprime) -- (qsec) node[midway,right] {$O((\widehat{\lambda}^c_n)^{-1}\tau_n)$};

\draw[cyan!50!black,thick] (q) .. controls +(0,0.2) and  +(.1,0) ..  (0,1);
\draw[cyan!50!black,thick] (qsec) .. controls +(0,0.1) and  +(.1,0) ..  (7,1.5) ;

\draw[red, thick] (0,2) .. controls +(1,-0.3) and  +(-1,0.2) .. (7,2.5);

\draw[densely dashed, color=blue!30!black,thick] (0,2) -- (0,1);
\draw[densely dashed, color=blue!30!black,thick] (7,2.5) -- (7,1.5) ;

\fill (qprime) circle [radius=2pt];
\fill (q) circle [radius=2pt];

\coordinate (y) at (0,2);
\coordinate (y1) at (7,2.5);
\node[circle,inner sep=1.5pt,fill=black] at (y) {}; 
\node[circle,inner sep=1.5pt,fill=black] at (y1) {}; 
\node[circle,inner sep=1.5pt,fill=black] at (qsec) {}; 

\node [below] at (q) {$q$};
\node [below] at (qprime) {$q'$};
\node [above right] at (qsec) {$z'$};
\node [above] at (y1) {$y'$};
\node [above] at (y) {$y$};
\end{scope}
\end{tikzpicture}
    \caption{Picture for the proof of Lemma \ref{Lemma32}.}
    \label{Fig: Lemma32}
\end{figure}
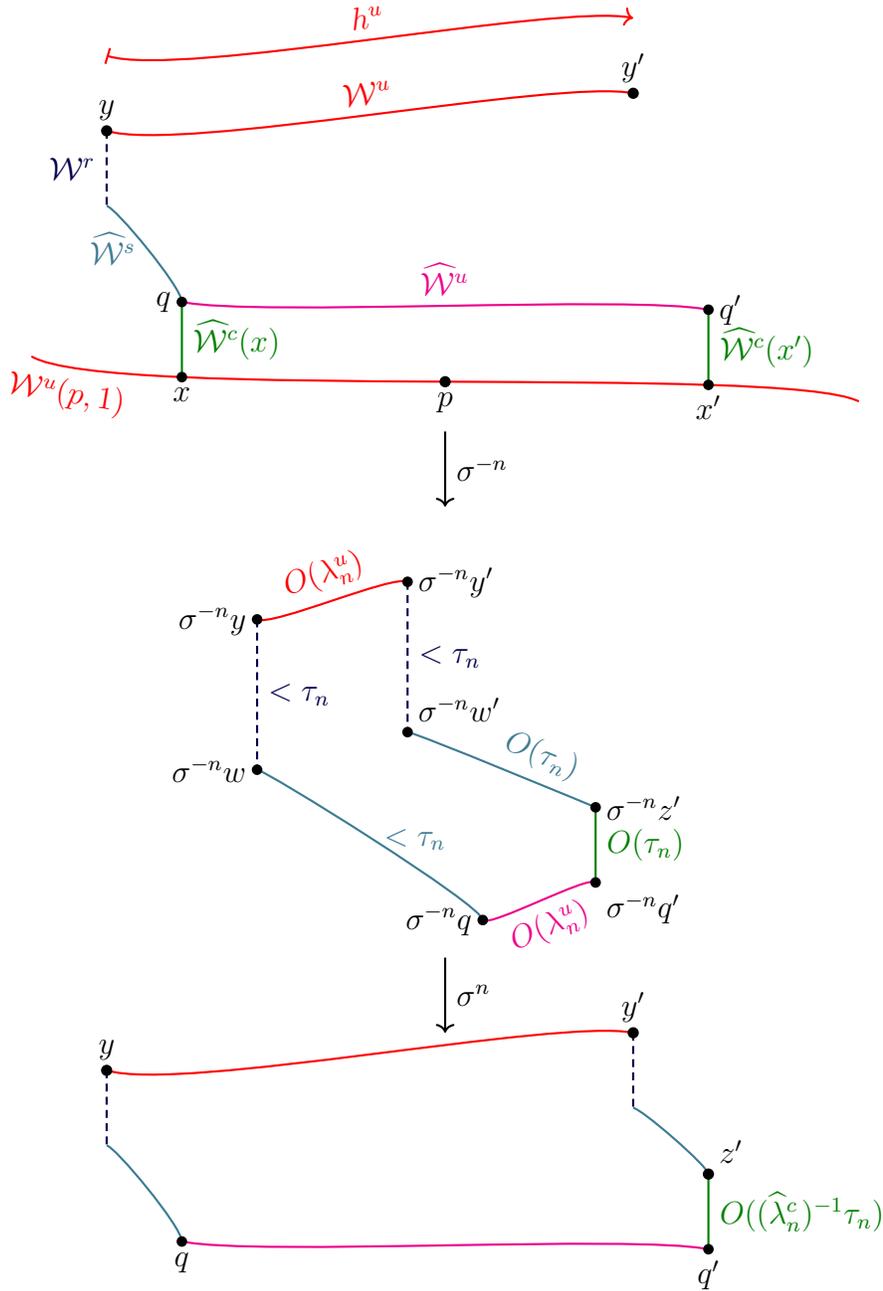

Our proof of this lemma closely follows the proof of \cite[Lemma 6.2]{BW10}.
We provide a detailed version in Appendix \ref{Appendix proofs},
but we sketch the proof here.
Figure \ref{Fig: Lemma32} shows the setup for Lemma \ref{Lemma32} and illustrates the key step of the proof;
studying the distances between the images of the points under $\sigma^{-n}$.
We want to bound the growth of $d(q',z')$, $d(\sigma^{-n}z',\sigma^{-n}w')$, and $d(\sigma^{-n}w',\sigma^{-n}y')$.
Our approach to dealing with $d(q',z')$ and $d(\sigma^{-n}z',\sigma^{-n}w')$ is similar to the proof of \cite[Lemma 6.2]{BW10}.
Since the points $\sigma^{-n}w'$ and $\sigma^{-n}y'$ are in the same local $r$-set, with the $r$-direction being introduced by the noninvertible case, bounding the distance $d(\sigma^{-n}w',\sigma^{-n}y')$ presents a new problem. 
The points $\sigma^{-n}w'$ and $\sigma^{-n}y'$ are the images of the points $\sigma^{-n}w$ and $\sigma^{-n}y$ through the $scu$-holonomy that sends $\mathcal{W}^r\loc(w)$ to $\mathcal{W}^r\loc(w')$. From Lemma \ref{lemma:fibermetricinvariance}, it follows that $d(\sigma^{-n}w',\sigma^{-n}y')<\tau_n$.

Proposition \ref{Prop:UnstableHolonomyBounds} follows by combining
Lemmas \ref{Lemma31} and \ref{Lemma32}.
See Appendix \ref{Appendix proofs} for a detailed proof.

\FloatBarrier

\section{Density of \textit{rsc}-juliennes}\label{Section: Density of rsc-julienne}

We now come to the proof of Lemma \ref{juldens}. 
Recall that $A_0\subset M$ is an $s$-saturated set and $A^{rs}=\pi^{-1}(A_0)$.
Moreover, $A^{rs}\subset M_f$ is both an $rs$-saturated set and an essentially $u$-saturated set, with $A^u$ being the $u$-saturate of $A^{rs}$.
As defined previously, for $\alpha\in \{s,c,u,sc,cu\}$, $\widehat{\mathcal{W}}^\alpha$ is the fake invariant foliation
defined near the orbit of $p$.

The results in this section establish the following chain of equivalences.
\begin{eqnarray*}
\pi(x)\text{ is a Lebesgue density point of } \pi(A^{rs}) & \iff & \lim_{n\to \infty} \nu (A^{rs}:B_n(x))  =1 \\
\iff \lim_{n\to \infty} \nu (A^{rs}:E_n(x))  =1 & \iff      & \lim_{n\to \infty} \nu (A^{rs}:F_n(x))  =1\\
\iff \lim_{n\to \infty} \nu (A^{rs}:G_n(x))  =1 & \iff      & \lim_{n\to \infty} \widehat{\nu}^{rsc} (A^u:\widehat{J}^{rsc}_n(x))  =1.
\end{eqnarray*}

In \cite[Section 8]{BW10}, Burns and Wilkinson define seven sequences of sets $B_n(x)$, $C_n(x)$, $D_n(x)$, $E_n(x)$, $F_n(x)$, $G_n(x)$, and $\widehat{J}^{cu}_n(x)$.
These sequences of sets serve to prove that for a measurable set $X$ that is both $s$-saturated and essentially $u$-saturated, a point $x\in \mathcal{W}^s(p,1)$ is a Lebesgue density point of $X$ if and only if $$\lim_{n\to \infty} \widehat{m}_{cu} (X:\widehat{J}^{cu}_n(x))  =1.$$
They do this by establishing the following chain of equivalences.
\begin{eqnarray*}
x\text{ is a Lebesgue density point of } X & \iff & \lim_{n\to \infty} m (X:B_n(x))  =1 \\
\iff \lim_{n\to \infty} m (X:C_n(x))  =1 & \iff      & \lim_{n\to \infty} m (X:D_n(x))  =1\\
\iff \lim_{n\to \infty} m (X:E_n(x))  =1 & \iff      & \lim_{n\to \infty} m (X:F_n(x))  =1\\
\iff \lim_{n\to \infty} m (X:G_n(x))  =1& \iff      & \lim_{n\to \infty} \widehat{m}_{cu} (X:\widehat{J}^{cu}_n(x))  =1.
\end{eqnarray*}

We define our versions of $B_n(x)$, $E_n(x)$, $F_n(x)$, $G_n(x)$, and $\widehat{J}^{rsc}_n(x)$,
closely following the definitions of the respective sequences of sets in \cite{BW10},
but extend the juliennes a small distance into the fiber or ``$r$'' direction.
Our equivalence
$$\lim_{n\to \infty} \nu (A^{rs}:B_n(x))  =1 \iff \lim_{n\to \infty} \nu (A^{rs}:E_n(x))  =1,$$
follows almost directly from their corresponding results.
Hence, we do not define versions of $C_n(x)$ and $D_n(x)$ in our proof.
The primary novelty in the set definitions and the proofs of the equivalences included in this section is our approach to handling the $r$-direction.

Define 
$$B_n^{scu}(x) \coloneqq \mathcal{W}^{scu}(x,\delta_n)
\quad\text{and} \quad
B_n(x) \coloneqq  \bigcup_{q\in B_n^{scu}(x)} \mathcal{W}^r\loc(q),$$
then $B_n(x)=\pi^{-1}(B(x_0,\delta_n))$. 
Since $\mu$ is the push-forward measure $\pi_*(\nu)$,
we have $\mu({A_0}:B(x_0,\delta_n))={\nu({A^{rs}}:B_n(x))}$.
Then it follows that $\pi(x)$ is a Lebesgue density point of $\pi(A^{rs})$ if and only if $$\lim_{n\to \infty} \nu (A^{rs}:B_n(x))  =1.$$

Before showing the equivalence with $\lim_{n\to \infty} m (X:E_n(x)) = 1$, we establish two important properties of the $J^r_n$ juliennes.
Recall that $J^r_n(y) $ is defined as $ \sigma^n(\mathcal{W}^r(\sigma^{-n}y,\tau_n))$.
\begin{lema}\label{lemma:Jr-regular}
    The $\nu^r$-measure of $J^r_n(y)$ has the following properties:
    \begin{enumerate}
        \item $\nu^r(J^r_n(y))$ does not depend on $y\in M_f$, and,
        \item $J^r_n(y)$ is regular.
    \end{enumerate}
\end{lema}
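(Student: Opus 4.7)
The plan is as follows. The key observation is that the $\nu^r$-measure of any ball $\mathcal{W}^r(y, s)$ on a fiber depends only on the radius $s$ and not on the center $y$, and moreover can be computed explicitly. Indeed, by Lemma \ref{lemma:fibermetric}, distances inside $\mathcal{W}^r\loc(y)$ take only the discrete values $\{0\} \cup \{\lambda^r_m : m \geq 1\}$, so for any $s > 0$ the ball $\mathcal{W}^r(y,s)$ coincides with $\mathcal{W}^r(y, \lambda^r_m)$ for the unique integer $m = m(s) \geq 1$ with $\lambda^r_{m+1} < s \leq \lambda^r_m$ (taking $m = 1$ whenever $s \geq \lambda^r$). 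Combining the scaling identity $d(\sigma^k x, \sigma^k z) = \lambda^r_k\, d(x,z)$ from the end of Section \ref{Sec: Topological structure of the space of orbits} with the injectivity of $\sigma$ on each $r$-fiber, one checks directly that $\sigma^{m-1}$ restricts to a bijection
\[
\sigma^{m-1} \colon \mathcal{W}^r\loc(\sigma^{-(m-1)}y) \longrightarrow \mathcal{W}^r(y, \lambda^r_m).
\]
Iterating Proposition \ref{Prop:r-measure}(2) and using $\nu^r(\mathcal{W}^r\loc) = 1$ then yields the explicit formula
\[
\nu^r(\mathcal{W}^r(y, \lambda^r_m)) \;=\; [\deg(f)]^{-(m-1)},
\]
which plainly depends only on $m$, hence only on $s$.

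Part (1) now follows immediately from the alternative description $J^r_n(y) = \mathcal{W}^r(y, \lambda^r_n \tau_n)$ already recorded in the text: setting $a_n := \lambda^r_n \tau_n$ and $m_n := m(a_n)$, we obtain $\nu^r(J^r_n(y)) = [\deg(f)]^{-(m_n - 1)}$, a quantity that depends on $n$ but not on $y$. For part (2), I will control how fast $m_n$ grows. The multiplicative cocycle identity gives
\[
\frac{a_{n+1}}{a_n} \;=\; \lambda^r \, \tau(\sigma^n p),
\]
and since $\tau$ is a continuous positive function on the compact space $M_f$, this ratio lies in a fixed interval $[\lambda^r \tau_{\min}, \lambda^r \tau_{\max}] \subset (0, 1)$. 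A short logarithmic computation using $\lambda^r_{m_n+1} < a_n \leq \lambda^r_{m_n}$ shows that $m_{n+1} - m_n$ is uniformly bounded above by some integer $L$ depending only on $\lambda^r$, $\tau_{\min}$, and $\tau_{\max}$. Taking $K = 1$ then gives
\[
\frac{\nu^r(J^r_{n+1}(y))}{\nu^r(J^r_n(y))} \;=\; [\deg(f)]^{-(m_{n+1} - m_n)} \;\geq\; [\deg(f)]^{-L} \;=:\; C \;>\; 0,
\]
which is exactly the regularity condition.

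The only real obstacle is tidying up the bookkeeping at boundary cases: for instance when $s$ equals some $\lambda^r_m$ exactly, or in the initial regime where $a_n \geq \lambda^r$ (so that the ball equals the entire fiber and has measure $1$). Both are handled cleanly by the convention $m(s) = 1$ for $s \geq \lambda^r$, and since the interesting asymptotic behaviour occurs for large $n$ (where $a_n$ is small), they have no bearing on either conclusion.
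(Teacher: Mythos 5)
Your proposal is correct and takes essentially the same route as the paper: both establish that $\nu^r(\mathcal{W}^r(y,s))$ depends only on $s$ via the identification $\mathcal{W}^r(y,\lam^r_m)=\sig^{m-1}(\Wrloc(\sig^{-(m-1)}y))$ and the scaling of $\nu^r$, and both deduce regularity from a uniform positive lower bound on $\tau$ (the paper phrases it as choosing $K$ with $\tau \geq \lam^r_K$, you phrase it as $\tau \geq \tau_{\min}$, but these are interchangeable). One caution on the flagged bookkeeping: your stated convention $\lam^r_{m+1} < s \le \lam^r_m$ picks the index that is off by one in the case $\lam^r_{m+1} < s < \lam^r_m$ strictly, since then the ball $\mathcal{W}^r(y,s)$ equals $\mathcal{W}^r(y,\lam^r_{m+1})$, not $\mathcal{W}^r(y,\lam^r_m)$; the correct $m(s)$ is the \emph{least} $m\ge 1$ with $\lam^r_m \le s$. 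This does not affect the structure of either part of the argument.
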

\begin{proof}
    Lemma \ref{lemma:fibermetric} and Proposition \ref{Prop:r-measure} establish that for all $y,q\in M_f$ such that $d(y,q)\leq \lambda^r_m$,
    \[
    q\in \sigma^{m-1}(\mathcal{W}^r\loc(\sigma^{-(m-1)}y))\quad\text{and}\quad\nu^r(\sigma^{m-1}(\mathcal{W}^r\loc(\sigma^{-(m-1)}y)))=\frac{1}{\deg(f)^{m-1}}.
    \]
    Thus the $\nu^r$ measure of a set $\mathcal{W}^r(y,r)$ only depends on $r$.
    Note that $J^r_n(y)=\mathcal{W}^r(y,\tau_n\lambda_n^r)$, the first result follows from this.

    To show that $J^r_n(y)$ is regular note that there exists a positive integer $K$ such that $\tau\geq\lambda^r_K$.
    Then
    \[
    \nu^r(\mathcal{W}^r(y,\tau_{n+1}))\geq \nu^r(\mathcal{W}^r(y,\tau_{n}\lambda^r_K))=
    \deg(f)^{-K}\cdot \nu^r(\mathcal{W}^r(y,\tau_{n})). \qedhere
    \]
\end{proof}
Recall from Section \ref{Sec: Unstable saturation} that $\widehat{B}^c_n(q)$ is the Riemannian ball inside the fake center manifold through $q$ centred at $q$ of radius $\delta_n$.
Define \[
E_n^{scu}(x)\coloneqq \bigcup_{y\in\widehat{\mathcal{W}}^u(x,\delta_n)} \;\bigcup_{z\in\widehat{B}^c_n(y)} \widehat{J}^s_n(z)
\quad\text{and} \quad
E_n(x) \coloneqq  \bigcup_{q\in E^{scu}_n(x)} J^r_n(q).
\]
\begin{lema}\label{Lemma:BscutoEscu}
The following equivalence holds,
$$\lim_{n\to \infty} \nu^{scu} (A^{rs}:B_n^{scu}(x))  =1 \iff 
\lim_{n\to \infty} \nu^{scu} (A^{rs}:E_n^{scu}(x))  =1.$$
\end{lema}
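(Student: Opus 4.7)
The statement is a purely local density result inside the $scu$-sheet $\mathcal{W}^{scu}(x)$. Within this sheet, the measure $\nu^{scu}$ coincides via the local diffeomorphism $\pi$ with Riemannian volume (Proposition~\ref{Prop:scu measure}), and the shift $\sig$ restricted to the $\sig$-orbit of the sheet behaves as a $C^{2}$, partially hyperbolic, center bunched diffeomorphism carrying the fake invariant foliations supplied by Proposition~\ref{prop:fakefoln}. The sets $B_n^{scu}(x)$ and $E_n^{scu}(x)$ are the direct analogues of the sequences $B_n$ and $E_n$ from \cite[Section 8]{BW10}, with the roles of $s$ and $u$ swapped to match our focus on $u$-saturation; the choice of $\tau,\delta$ in \eqref{eq:def tau and delta} is the mirror image of the choice used in \cite{BW10}.

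My plan is to transplant the chain of internestings $B_n \leftrightarrow C_n \leftrightarrow D_n \leftrightarrow E_n$ from \cite[Section 8]{BW10} into the $scu$-sheet. First, introduce intermediate sequences $C_n^{scu}(x)$ and $D_n^{scu}(x)$ modeled on the corresponding Burns--Wilkinson sets, again with $s$ and $u$ swapped. Next, establish three internesting relations: (i) $B_n^{scu} \leftrightarrow C_n^{scu}$ from the almost tangency of the fake foliations to the invariant distributions together with the exponential growth bounds in Proposition~\ref{prop:fakefoln}; (ii) $C_n^{scu} \leftrightarrow D_n^{scu}$ from the absolute continuity with bounded Jacobians of the unstable foliation inside the $scu$-sheet, combined with the $C^{1}$ subfoliation of fake $cu$-leaves by fake unstable leaves, which is a consequence of Proposition~\ref{prop:fakefoln} and the center bunching hypothesis; and (iii) $D_n^{scu} \leftrightarrow E_n^{scu}$ by a direct geometric rescaling that compares fake-foliation boxes of radii $\delta_n$ in the $c$ and $u$ directions and $\tau_n$ in the $s$ direction with the round Riemannian ball $B_n^{scu}(x)$. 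Chaining the three equivalences via Lemma~\ref{comparablesequences} yields the result.

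The main obstacle is the middle step, $C_n^{scu} \leftrightarrow D_n^{scu}$: one must show that holonomy by the true unstable foliation between fake stable-center transversals inside the $scu$-sheet distorts $\widehat{\nu}^{sc}$ by at most a bounded factor, independent of $n$. This relies on both the transverse absolute continuity of the unstable foliation inside an $scu$-sheet recorded in the previous section, and on the fact that center bunching guarantees the required $C^{1}$ subfoliation of fake $cu$-leaves so that the Jacobian comparison can be integrated leaf-by-leaf. Once this bounded Jacobian is in hand, the remaining estimates reduce to routine comparisons of fake-foliation boxes with round Riemannian balls, and the entire argument becomes a verbatim transplant of \cite[Section 8]{BW10} from the diffeomorphism setting to the present one, which is why the equivalence ``follows almost directly''.
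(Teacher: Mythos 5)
Your proposal takes essentially the same route as the paper: the paper's proof of this lemma is a one-line deferral, noting that $B_n^{scu}$ and $E_n^{scu}$ are the $scu$-sheet analogues of the Burns--Wilkinson sets $B_n$ and $E_n$ with the roles of $s$ and $u$ swapped, so the internesting chain from \cite[Section 8]{BW10} (through the intermediate sets $C_n$ and $D_n$) carries over verbatim. Your plan just spells out that chain explicitly and names the ingredients from Proposition~\ref{prop:fakefoln} used at each step, which is consistent with the paper's intent.
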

\begin{proof}
The definitions of $B_n^{scu}(x)$ and $E_n^{scu}(x)$ are analogous to the sequences of sets $B_n(x)$ and $E_n(x)$ from \cite[Section 8]{BW10}, respectively. 
We swap the roles of $\widehat{\mathcal{W}}^u$ and $\widehat{\mathcal{W}}^s$ since we are showing that the set of Lebesgue density points of $A_0$ is $u$-saturated. The proof of Lemma \ref{Lemma:BscutoEscu} is identical to the steps to prove the equivalence between $B_n(x)$ and $E_n(x)$ carried out by Burns and Wilkinson, accounting for the swapping of unstable and stable directions.    
\end{proof}

\begin{lema}
    The following equivalence holds,
    \[
    \lim_{n\to \infty} \nu (A^{rs}:B_n (x))  =1 \iff 
    \lim_{n\to \infty} \nu (A^{rs}:E_n(x))  =1.
    \]
\end{lema}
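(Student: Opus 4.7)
The strategy is to reduce to Lemma \ref{Lemma:BscutoEscu} by proving the two density identities
\[
\nu(A^{rs}:B_n(x)) = \nu^{scu}(A^{rs}:B_n^{scu}(x))
\qandq
\nu(A^{rs}:E_n(x)) = \nu^{scu}(A^{rs}:E_n^{scu}(x))
\]
and then combining them with that lemma. Both identities rest on the local product structure $\nu = \nu^{scu}\times\nu^r$ from Proposition \ref{defnu}, together with the observation that, because $A^{rs}$ is $r$-saturated and equal to $\pi^{-1}(A_0)$, each local fibre $\Wrloc(q)$ is either entirely contained in $A^{rs}$ (when $\pi(q)\in A_0$) or disjoint from it.

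For $B_n(x) = \pi^{-1}(B(x_0,\delta_n))$, I would choose a simply connected $U\supset B(x_0,\delta_n)$ and use Proposition \ref{canon} to write $\pi^{-1}(U) = U \times \pi^{-1}(x_0)$, so that $B_n(x)$ becomes a full product $B(x_0,\delta_n) \times \pi^{-1}(x_0)$. Applying Fubini with $\nu^r(\pi^{-1}(x_0))=1$ and the slice-wise dichotomy above immediately gives the first identity.

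For $E_n(x)$, the extra difficulty is that the fibre ball $J^r_n(q)$ a priori depends on $q\in E_n^{scu}(x)$. The plan is to show that, in the trivialisation, $E_n(x)$ is nevertheless a genuine product $\pi(E_n^{scu}(x))\times B$ where $B = \mathcal{W}^r(x,\lam^r_n\tau_n)\subset \pi^{-1}(x_0)$ is a single ball. Indeed, $E_n^{scu}(x)$ lies in the single $scu$-sheet through $x$, so all its points share the same second coordinate under the trivialisation; and by Lemma \ref{lemma:fibermetricinvariance} the $scu$-holonomy $\Wrloc(q)\to \Wrloc(x)=\pi^{-1}(x_0)$ is an isometry sending $q\mapsto x$, so it carries $J^r_n(q)$ onto the single ball $B$. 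Fubini together with the slice-wise $r$-saturation then yields $\nu(A^{rs}\cap E_n(x)) = \nu^r(B)\cdot \nu^{scu}(A^{rs}\cap E_n^{scu}(x))$ and the analogous equality without $A^{rs}$, so the factor $\nu^r(B)$ cancels in the density ratio.

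The main obstacle is verifying this second product decomposition carefully, namely that the canonical trivialisation of Proposition \ref{canon} interacts with the $scu$-holonomy so as to convert the union $\bigcup_{q\in E_n^{scu}(x)}J^r_n(q)$ into the clean product $\pi(E_n^{scu}(x))\times B$; once this is in hand, the conclusion is immediate from Fubini, the $r$-saturation dichotomy, and Lemma \ref{Lemma:BscutoEscu}.
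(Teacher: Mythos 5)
Your proposal is correct and takes essentially the same route as the paper: express $B_n(x)$ and $E_n(x)$ as products under the identification of Proposition \ref{canon}, use that $A^{rs}$ is $r$-saturated so the $\nu^r$-factor cancels from each density ratio, and reduce to Lemma \ref{Lemma:BscutoEscu}. The only difference is that you explicitly justify, via the $scu$-holonomy isometry of Lemma \ref{lemma:fibermetricinvariance}, why $E_n(x)$ decomposes as the clean product $E_n^{scu}(x)\times J^r_n(x)$, a step the paper's proof asserts without comment.
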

\begin{proof}
    Proposition \ref{defnu} states that, locally, the measure $\nu$ is the product measure $\nu^{scu}\times \nu^{r}$.
    The sets $B_n(x)$ and $E_n(x)$ can be expressed as a product with respect to the identification obtained from Proposition \ref{canon}.
    These sets are given as $B_n^{scu}(x)\times \mathcal{W}^r\loc(x)$ and $E_n^{scu}(x)\times J_n^r(x)$, respectively. Since $A^{rs}$ is $r$-saturated the result follows from Lemma \ref{Lemma:BscutoEscu}.
\end{proof}

Let $F_n(x)$ be defined as the product of foliations between $\widehat{J}_n^{rsc}(x)$ and $\mathcal{W}^u(x,\delta_n)$ as follows
$$
F_n(x)\coloneqq
\bigcup_{\substack{{q\in \widehat{J}^{rsc}_n(x)} \\{ q' \in \mathcal{W}^u(x,\delta_n)}}} \mathcal{W}^u\loc(q) \cap \widehat{\mathcal{W}}^{rsc}\loc (q').
$$
Note that for $n$ large enough Proposition \ref{prop:fakefoln}, part \ref{Lemma_i:Almost tangency to invariant distributions} implies that $\mathcal{W}^u\loc(q) $ and $ \widehat{\mathcal{W}}^{rsc}\loc (q')$ are transverse.
Define 
\[
G_n(x)\coloneqq \bigcup_{q \in \widehat{J}^{rsc}_n(x)} \mathcal{W}^u(q,\delta_n) .
\]
We can also write this as
\[
    G_n(x)=\bigcup_{z\in J^r_{n}(x) } G^{scu}_n(z)
    \quad \text{where} \quad
    G^{scu}_n(z)\coloneqq  \bigcup_{q \in \widehat{J}^{sc}_n(z)} \mathcal{W}^u(q,\delta_n).
\]

\begin{lema}
\label{Lemma E F G internested}
The sequences $E_n(x)$ and $F_n(x)$ are internested, and the sequences $F_n(x)$ and $G_n(x)$ are internested.
\end{lema}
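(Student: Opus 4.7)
The plan is to reduce each internesting claim to the corresponding statement on a single $scu$-sheet, where it follows from the argument of \cite[Section 8]{BW10}. The key observation is that the $r$-direction decouples uniformly across all three sets. Using the product identification $\widehat{\mathcal{W}}^{rsc}\loc(x) \cong \mathcal{W}^r\loc(x) \times \widehat{\mathcal{W}}^{sc}\loc(x)$ furnished by Proposition \ref{canon}, together with Lemma \ref{lemma:fibermetricinvariance} (so that $scu$-holonomy preserves the $r$-coordinate), and the fact that $\mathcal{W}^u$- and $\widehat{\mathcal{W}}^\beta$-leaves stay inside a single $scu$-sheet, I plan to show each set factors as
\[
    X_n(x) \;=\; J^r_n(x) \times X_n^{scu}(x),
\]
where $X_n^{scu}(x) \subset \mathcal{W}^{scu}(x)$ is the analogous object built entirely in the $scu$-sheet of $x$.

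Next, I would identify the sheet-level sets $E_n^{scu}(x)$, $F_n^{scu}(x)$, $G_n^{scu}(x)$ with (after swapping the roles of $s$ and $u$) the corresponding sets in \cite[Section 8]{BW10} for the restriction of $\sigma$ to the orbit of $\mathcal{W}^{scu}(x)$. As noted in the proof of Lemma \ref{Lemma:SCU_density_saturated}, this restriction is a $C^2$, center bunched, partially hyperbolic diffeomorphism, and Proposition \ref{prop:fakefoln} supplies the fake foliations with exactly the estimates used by Burns and Wilkinson. Thus their internesting lemmas apply verbatim and yield a uniform integer $k$ with $E_n^{scu}(x) \subset F_{n-k}^{scu}(x) \subset E_{n-2k}^{scu}(x)$ and $F_n^{scu}(x) \subset G_{n-k}^{scu}(x) \subset F_{n-2k}^{scu}(x)$.

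Finally, since $J^r_n(x) = \mathcal{W}^r(x, \lambda^r_n \tau_n)$ and $\lambda^r \tau < 1$, the family $J^r_n(x)$ is nested decreasing, so $J^r_n(x) \subset J^r_{n-k}(x)$. Combining the $r$-factor inclusions with the sheet-level inclusions gives the internesting of the full products. I expect the main obstacle to be justifying the product decomposition of $F_n(x)$: its definition intersects a true unstable leaf (living on a single sheet) with $\widehat{\mathcal{W}}^{rsc}\loc(q')$ (which spans across sheets via $\mathcal{W}^r$). Because unstable leaves do not move the $r$-coordinate, any intersection point inherits the $r$-coordinate of $q$, placing it in $J^r_n(x)$; verifying that the resulting $scu$-slices over $J^r_n(x)$ are independent of the $r$-parameter (via the isometric $scu$-holonomy) is where the bookkeeping is most delicate.
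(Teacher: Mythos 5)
Your proposed product decomposition $X_n(x) = J^r_n(x) \times X_n^{scu}(x)$ is correct for $E_n(x)$ (because the $J^r_n$ factor is translation-invariant and the fake $sc$- and $s$-leaves project consistently across $r$-fibers by Proposition~\ref{prop:fakefoln}\ref{Lemma_viii:Endos}), but it fails for $F_n(x)$ and $G_n(x)$, and this is a genuine gap rather than bookkeeping. The definitions of $F_n(x)$ and $G_n(x)$ both involve the \emph{true} unstable leaf $\mathcal{W}^u(q)$ through a point $q \in \widehat{J}^{rsc}_n(x)$. That leaf depends on the entire backward orbit of $q$, not just on $\pi(q)$; consequently, two points $z, z' \in J^r_n(x)$ that sit over the same base point but have different pasts give rise to different projections $\pi(\mathcal{W}^u(z,\delta_n))$ and $\pi(\mathcal{W}^u(z',\delta_n))$. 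The $scu$-slices of $G_n(x)$ (and similarly $F_n(x)$) over different $r$-coordinates are therefore not equal, and $G_n(x)$ is not a product set. The tool you invoke to control this --- the isometry of $scu$-holonomy between $r$-fibers (Lemma~\ref{lemma:fibermetricinvariance}) --- addresses $r$-distances only; it says nothing about how unstable leaves in nearby $scu$-sheets compare. Indeed, the paper is careful about exactly this point: in Lemma~\ref{Lemma Gn is regular} it notes that $\pi(\widehat{J}^{sc}_n(z))$ is independent of $z \in J^r_n(x)$ but then switches to a continuity-of-$E^u$ argument to compare the $\nu^{scu}$-measures of the different slices $G^{scu}_n(z)$ --- it never claims the slices are equal, and for internesting you would need containments rather than mere measure comparability.

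The paper's actual route avoids this entirely. For $E$ versus $F$, it rewrites both as unions over $q \in \mathcal{W}^u(x,\delta_n)$: by Proposition~\ref{prop:fakefoln}\ref{Prop_v: Uniqueness}, $E_n(x) = \bigcup_q \widehat{J}^{rsc}_n(q)$, while $F_n(x) = \bigcup_q h^u_{x\to q}(\widehat{J}^{rsc}_n(x))$, and Proposition~\ref{Prop:UnstableHolonomyBounds} immediately internests the two once you take the union. For $F$ versus $G$, it reduces to showing that the $rsc$-holonomy $h\colon \mathcal{W}^u\loc(x) \to \mathcal{W}^u(q)$ approximately preserves $\delta_n$-scale unstable balls, and again the needed distance estimate $d(q,q') = O(\delta_n)$ is read off from Proposition~\ref{Prop:UnstableHolonomyBounds}. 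In other words, all the subtle $r$-direction and cross-sheet comparisons have already been absorbed into Proposition~\ref{Prop:UnstableHolonomyBounds} (in particular into the estimate $d(\sigma^{-n}w',\sigma^{-n}y') < \tau_n$ in Lemma~\ref{Lemma32}), and Lemma~\ref{Lemma E F G internested} should be derived by \emph{using} that proposition, not by attempting an independent reduction to the sheet level. To salvage your approach you would need to prove that the $scu$-slices over $J^r_n(x)$ are internested (not equal), with a uniform shift --- but the estimates required to do that are essentially those proving Proposition~\ref{Prop:UnstableHolonomyBounds}, so you would be re-deriving a special case of a result the paper already supplies for free.
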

The proof of Lemma \ref{Lemma E F G internested} is a direct adaptation of \cite[Lemma 8.2.]{BW10}
with the details given in Appendix \ref{Appendix proofs}.

\begin{lema}
\label{Lemma Gn is regular}
For each $x \in \mathcal{W}^u(p,1)$, the sequence $G_n(x)$ is regular.
\end{lema}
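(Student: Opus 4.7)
The plan is to exploit the local product structure of $\nu$ together with the already-established regularity of $J^r_n$ and the adaptation of the Burns--Wilkinson regularity argument to each individual $scu$-sheet.

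First, I would write $G_n(x)$ in a product form. Using the identification from Proposition \ref{canon} and the product structure of $\nu$ from Proposition \ref{defnu}, we have
\[
    \nu(G_n(x))
    \ =\ \int_{J^r_n(x)} \nu^{scu}\!\bigl(G^{scu}_n(z)\bigr)\, d\nu^r(z),
\]
since the sets $G^{scu}_n(z)$ for distinct $z \in J^r_n(x)$ lie in distinct $scu$-sheets (namely $\mathcal{W}^{scu}(z)$) and are therefore disjoint. So the lemma reduces to controlling the two factors: the measure $\nu^r(J^r_n(x))$ of the $r$-julienne base and the $\nu^{scu}$-measure of the $scu$-slice over each $z$.

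Second, I would show that the function $z \mapsto \nu^{scu}(G^{scu}_n(z))$ is uniformly regular in $n$, with constants independent of $z$. This is a direct adaptation of \cite[Lemma 8.3]{BW10}: inside the fixed $scu$-sheet $\mathcal{W}^{scu}(z)$, the restriction of $\sigma$ is a partially hyperbolic, center-bunched $C^2$ diffeomorphism, the fake foliations satisfy the uniform properties of Proposition \ref{prop:fakefoln}, and the cocycles $\delta,\tau,\hat\lambda^c$ entering the definition of $G^{scu}_n$ are bounded uniformly on the compact space $M_f$. Consequently there exist uniform constants $C_1 > 0$ and $K \geq 1$ such that
\[
    \nu^{scu}\bigl(G^{scu}_{n+K}(z)\bigr) \ \geq\ C_1\, \nu^{scu}\bigl(G^{scu}_n(z)\bigr)
\]
for every $z$ and every $n \geq 0$.

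Third, I would combine these estimates. By Lemma \ref{lemma:Jr-regular}, the sequence $J^r_n(x)$ is regular, and moreover $\nu^r(J^r_n(y))$ depends only on $n$, not on $y$. The step $2$ estimate holds pointwise in $z$, so passing it through the integral, together with regularity of the base, gives
\[
    \nu(G_{n+K}(x))
    \ =\ \int_{J^r_{n+K}(x)} \nu^{scu}\bigl(G^{scu}_{n+K}(z)\bigr)\, d\nu^r(z)
    \ \geq\ C_1\, C_2\, \nu(G_n(x)),
\]
where $C_2 > 0$ comes from the regularity of $J^r_n$ (after possibly enlarging $K$). This establishes the required inequality.

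The main obstacle is step two: writing down the $\nu^{scu}$-measure estimate for $G^{scu}_n(z)$ in a way that is genuinely uniform over all $scu$-sheets. The subtlety is that, although each scu-sheet carries its own unstable foliation, the fake foliations and their relevant cocycles are pulled back from the single orbit $p$, so uniformity amounts to checking that the bounds used in \cite[Lemma 8.3]{BW10} depend only on compactness of $M_f$ and on the cocycle bounds, not on the particular sheet; this was precisely the content of the uniformity assertions in Proposition \ref{prop:fakefoln} and is the reason that proposition was stated in the form it was.
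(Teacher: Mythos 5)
Your proof follows essentially the same route as the paper's: write $G_n(x)$ as a fibered union over $J^r_n(x)$, apply \cite[Lemma 8.3]{BW10} to each $scu$-slice $G^{scu}_n(z)$, and combine with the regularity of $J^r_n(x)$ (Lemma \ref{lemma:Jr-regular}) via the local product structure $\nu=\nu^r\times\nu^{scu}$.

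The one step you gesture at rather than close is the uniformity-in-$z$ of the constants from \cite[Lemma 8.3]{BW10}. Your justification is that the constants ``depend only on compactness of $M_f$ and on the cocycle bounds,'' but this does not quite isolate the source of $z$-dependence: the cocycles $\delta,\tau,\hat\lambda^c$ are all evaluated along the fixed orbit $p$, so they are literally the same quantities for every $z$ and carry no information about how $G^{scu}_n(z)$ moves as $z$ ranges over the fiber. The paper closes this gap with a sharper observation: Proposition \ref{prop:fakefoln}, part \ref{Lemma_viii:Endos}, shows that $\pi(\widehat{J}^{sc}_n(z))$ is the same subset of $M$ for every $z\in J^r_n(x)$ (any two such $z$ share the same forward orbit, and the fake $s$- and $sc$-foliations depend only on the forward orbit), so the only $z$-dependence in $G^{scu}_n(z)$ comes from the true unstable foliation $\mathcal{W}^u$ inside the sheet of $z$. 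Continuity of $E^u$ over the compact fiber $\pi^{-1}(x_0)$ then gives the uniform constants $C>0$ and $K\ge 1$. In short, your obstacle paragraph correctly identifies that the sheet-dependent unstable foliation is where uniformity could fail, but it is the projection property of the fake foliations --- not compactness or cocycle bounds --- that resolves it.
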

\begin{proof}
From \cite[Lemma 8.3]{BW10} we obtain that for each $z\in J^r_{n}(x)$,
the sequence $G^{scu}_n(z)$ is regular with respect to the $\nu^{scu} $ measure.
Proposition \ref{prop:fakefoln},
part \ref{Lemma_viii:Endos} implies that $\pi( \widehat{J}^{cs}_n(z))$ does not depend on the choice of $z\in J^r_{n}(x)$.
Then from the continuity $E^u$ we get that there exists a $C>0$ and a $K\geq 1$, such that for all $n\geq 0$ and any $z\in J^r_{n}(x)$,
we have that $\nu^{scu}(G^{scu}_{n+K}(z))\geq C \nu^{scu}(G^{scu}_{n}(z)).$
As shown in Lemma \ref{lemma:Jr-regular},
the sequence $J^r_{n}(x)$ is regular with respect to the measure $\nu^r$.
The result follows from this, as $\nu$ is locally $\nu^r\times\nu^{scu}$.
\end{proof}

From Lemmas \ref{Lemma E F G internested} and \ref{Lemma Gn is regular} we get the following equivalences
$$\lim_{n\to \infty} \nu (A^{rs}:E_n(x))  =1 
\iff \lim_{n\to \infty} \nu (A^{rs}:F_n(x))  =1 
\iff \lim_{n\to \infty} \nu (A^{rs}:G_n(x))  =1.$$
\begin{lema}
    The following equivalence holds,
    $$\lim_{n\to \infty} \nu (A^{rs}:G_n(x))  =1  \iff \lim_{n\to \infty} \widehat{\nu}^{rsc} (A^u: \widehat{J}^{rsc}_n(x))  =1.$$
\end{lema}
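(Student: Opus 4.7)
The plan is to use a Fubini-type disintegration of $\nu$ along the unstable foliation, exploiting the fact that $A^u$ is $u$-saturated and that $G_n(x)$ is a union of short unstable arcs based at the points of $\widehat{J}^{rsc}_n(x)$. First, since $\nu(A^u \triangle A^{rs}) = 0$, we may replace $\nu(A^{rs} : G_n(x))$ by $\nu(A^u : G_n(x))$. Because $A^u$ is $u$-saturated, for each $q \in \widehat{J}^{rsc}_n(x)$ the arc $\mathcal{W}^u(q,\delta_n)$ is contained in $A^u$ when $q \in A^u$ and is disjoint from $A^u$ otherwise, whence
$$
A^u \cap G_n(x) \ = \ \bigcup_{q \in A^u \cap \widehat{J}^{rsc}_n(x)} \mathcal{W}^u(q,\delta_n).
$$

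Next we disintegrate $\nu$. For $n$ large, $G_n(x) \subset \pi^{-1}(U)$ for some small simply connected open $U \subset M$, so by Proposition \ref{defnu} the restriction of $\nu$ equals $\nu^{scu} \times \nu^r$. Using the decomposition $G_n(x) = \bigcup_{z \in J^r_n(x)} G^{scu}_n(z)$ from the definition of $G_n$, together with the transverse absolute continuity of $\mathcal{W}^u$ with bounded Jacobians inside each $scu$-sheet (taking the fake $sc$-julienne $\widehat{J}^{sc}_n(z)$ as transversal), we obtain a factor $L(z,q)$ representing the unstable arc-length of $\mathcal{W}^u(q,\delta_n)$ combined with the unstable holonomy Jacobian, such that
$$
\nu(A^u \cap G_n(x)) \ = \ \int_{J^r_n(x)} \int_{A^u \cap \widehat{J}^{sc}_n(z)} L(z,q)\, d\widehat{\nu}^{sc}(q)\, d\nu^r(z),
$$
and the analogous formula for $\nu(G_n(x))$ with the inner integral taken over the full $\widehat{J}^{sc}_n(z)$. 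Since $\widehat{\nu}^{rsc} = \nu^r \times \widehat{\nu}^{sc}$ and $\widehat{J}^{rsc}_n(x) = \bigcup_{z} \widehat{J}^{sc}_n(z)$ in this product sense, the inner integrals are measuring exactly $\widehat{\nu}^{rsc}$-masses, weighted by $L$.

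By compactness of $M_f$, the uniformity of the unstable-holonomy Jacobian bounds, and continuity of the unstable-leaf volume of $\delta_n$-balls as the base point varies, there is a constant $C>0$ independent of $n$, $x$, and $z$ such that $L(z,q)/L(z,q') \le C$ for all $z \in J^r_n(x)$ and all $q,q' \in \widehat{J}^{sc}_n(z)$. Bounding the weighted integrals above and below then yields the two-sided comparison
$$
C^{-1}\, \widehat{\nu}^{rsc}((A^u)^c : \widehat{J}^{rsc}_n(x)) \ \le \ \nu((A^u)^c : G_n(x)) \ \le \ C\, \widehat{\nu}^{rsc}((A^u)^c : \widehat{J}^{rsc}_n(x)),
$$
so the complement density on one side tends to $0$ if and only if it does on the other, which is precisely the claimed equivalence. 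The main technical obstacle will be making the Fubini disintegration precise: $G_n(x)$ is a skew product rather than a genuine product, so we must combine the local $\nu^{scu} \times \nu^r$ structure from Proposition \ref{defnu} with the transverse absolute continuity of $\mathcal{W}^u$ inside each $scu$-sheet, while keeping all constants uniform in $n$ so that ratios pass cleanly to the limit.
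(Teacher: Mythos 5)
Your argument follows the same line as the paper: reduce from $A^{rs}$ to $A^u$ via the null symmetric difference, then compare $\nu$-density in $G_n(x)$ with $\widehat{\nu}^{rsc}$-density in $\widehat{J}^{rsc}_n(x)$ by disintegrating along the unstable arcs and using $u$-saturation together with a uniform two-sided bound on arc volumes. The paper outsources this last step to the purely measure-theoretic \cite[Proposition 2.7]{BW10}, after recording the uniform bound on $\nu^u(\mathcal{W}^u(q,\delta_n))/\nu^u(\mathcal{W}^u(q',\delta_n))$ for $q,q'\in\widehat{J}^{rsc}_n(x)$; you write the Fubini disintegration out explicitly, which is the same content made transparent.

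One imprecision to tighten: the bound you state, $L(z,q)/L(z,q')\le C$ for $q,q'$ ranging over a single fiber $\widehat{J}^{sc}_n(z)$, is not quite what the final two-sided comparison requires. After integrating over $z\in J^r_n(x)$, the fibers are weighted by $\int_{\widehat{J}^{sc}_n(z)} L(z,q)\,d\widehat{\nu}^{sc}(q)$ on the $\nu$-side but by $\widehat{\nu}^{sc}(\widehat{J}^{sc}_n(z))$ on the $\widehat{\nu}^{rsc}$-side, so a fiber-wise ratio bound alone does not let you pass the inequality through the outer integral. You need $L(z,q)/L(z',q')\le C$ over \emph{all} pairs in the full julienne $\widehat{J}^{rsc}_n(x)$; this is exactly the form of the paper's displayed inequality. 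The stronger bound follows from the same ingredients you invoke (continuity of $E^u$ on the compact $M_f$, uniform Jacobian bounds for the unstable holonomy inside $scu$-sheets, and the shrinking diameter of the julienne), so the fix is a matter of wording rather than a missing idea.
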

\begin{proof}
Since $\nu (A^{rs}\triangle A^{u })=0$ we have,
    $$\lim_{n\to \infty} \nu (A^{rs}:G_n(x))  =1  \iff\lim_{n\to \infty} \nu (A^{u}:G_n(x))  =1 .$$
Since $E^u(x)$ depends continuously on $x\in M_f$ there exists $c\geq 1$ such that for all $n\geq 0$ and any $q,q'\in \widehat{J}^{rsc}_n(x)$, the following inequalities hold
    $$ c^{-1}\leq \frac{\nu^u(\mathcal{W}^u(q,\delta^n))}{\nu^u(\mathcal{W}^u(q',\delta^n))}\leq c.$$\par 
The proof of \cite[Proposition 2.7]{BW10} is entirely measure theoretic in nature and applies to our setting.
Thus,
since $A^u$ is $\mathcal{W}^u$-saturated the result follows from \cite[Proposition 2.7]{BW10}. 
\end{proof}

\begin{lema}
For each $x\in \mathcal{W}^u(p,1)$,
the sequence $\widehat{J}_n^{rsc}(x)$ is regular
\end{lema}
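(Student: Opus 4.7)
The plan is to exploit the product structure of the $rsc$-julienne and reduce the claim to two separate regularity statements, one in the $sc$-direction and one in the $r$-direction, each of which is already essentially in hand.

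First, I would unpack the definitions. By construction,
\[
\widehat{J}^{rsc}_n(x) \;=\; \bigcup_{y \in \widehat{J}^{sc}_n(x)} J^r_n(y),
\]
and under the identification $\widehat{\mathcal{W}}^{rsc}_p(x) \cong \widehat{\mathcal{W}}^{sc}_p(x) \times \mathcal{W}^r\loc(x)$ this set corresponds to $\widehat{J}^{sc}_n(x) \times J^r_n(x)$. Since $\widehat{\nu}^{rsc} = \widehat{\nu}^{sc} \times \nu^r$, and since the $\nu^r$-measure of $J^r_n(y)$ is independent of the base point $y$ by Lemma \ref{lemma:Jr-regular}(1), Fubini gives
\[
\widehat{\nu}^{rsc}\!\bigl(\widehat{J}^{rsc}_n(x)\bigr)
\;=\; \widehat{\nu}^{sc}\!\bigl(\widehat{J}^{sc}_n(x)\bigr) \cdot \nu^r\!\bigl(J^r_n(x)\bigr).
\]
So regularity of $\widehat{J}^{rsc}_n(x)$ reduces to regularity of each factor.

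Second, the factor $\nu^r(J^r_n(x))$ is already shown to be regular in Lemma \ref{lemma:Jr-regular}(2): there exist $C_1 > 0$ and $K_1 \geq 1$ with $\nu^r(J^r_{n+K_1}(x)) \geq C_1 \nu^r(J^r_n(x))$ for all $n$. For the other factor, the sets $\widehat{J}^{sc}_n(x)$ inside the $scu$-sheet are defined exactly analogously to the $cu$-juliennes in \cite[Section 8]{BW10} (with the roles of the $s$ and $u$ directions exchanged, as already noted at the start of Section \ref{Sec: Unstable saturation} and again in the proof of Lemma \ref{Lemma:BscutoEscu}). Burns and Wilkinson establish in \cite[Lemma 8.3]{BW10} that their $cu$-juliennes are regular; the proof transcribes verbatim to our setting using Proposition \ref{prop:fakefoln} and the continuity of $E^s$ on the compact space $M_f$. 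Hence there exist $C_2 > 0$ and $K_2 \geq 1$ with
\[
\widehat{\nu}^{sc}\!\bigl(\widehat{J}^{sc}_{n+K_2}(x)\bigr) \;\geq\; C_2\, \widehat{\nu}^{sc}\!\bigl(\widehat{J}^{sc}_n(x)\bigr)
\]
for all $n \geq 0$.

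Finally, setting $K = \max\{K_1, K_2\}$, the monotonicity $\widehat{J}^{rsc}_{n+K}(x) \subset \widehat{J}^{rsc}_{n+\max(K_1,K_2)}(x)$ together with the factorisation above lets me combine the two regularity estimates multiplicatively to produce a constant $C = C_1 C_2 > 0$ with $\widehat{\nu}^{rsc}(\widehat{J}^{rsc}_{n+K}(x)) \geq C\, \widehat{\nu}^{rsc}(\widehat{J}^{rsc}_n(x))$, which is the definition of regularity. The only point that needs genuine care is the invocation of the adapted \cite[Lemma 8.3]{BW10} for the $\widehat{J}^{sc}_n(x)$; the remainder is a short product-measure computation.
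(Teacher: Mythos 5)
Your proof takes the same route as the paper: decompose $\widehat{\nu}^{rsc}(\widehat{J}^{rsc}_n(x))$ as a product via $\widehat{\nu}^{rsc} = \nu^r \times \widehat{\nu}^{sc}$, use Lemma \ref{lemma:Jr-regular} for the $r$-factor, adapt the Burns--Wilkinson julienne measure estimates for the $sc$-factor, and multiply. The substance is correct. Two small points deserve care. First, the $sc$-julienne measure estimate $\widehat{\nu}^{sc}(\widehat{J}^{sc}_{n+1}(x)) \geq c\,\widehat{\nu}^{sc}(\widehat{J}^{sc}_n(x))$ comes from adapting the julienne measure estimates of \cite[Section 7]{BW10}, not from \cite[Lemma 8.3]{BW10} --- in this paper Lemma 8.3 is instead invoked for the regularity of the $G_n^{scu}$ sequences; the distinction matters because Lemma 8.3 concerns tubes saturated by true unstable plaques, not fake-foliation juliennes. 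Second, your combination step with $K = \max\{K_1,K_2\}$ goes the wrong way: from $K \geq K_i$ and the nesting of the juliennes you only get $\widehat{J}^{sc}_{n+K}(x) \subset \widehat{J}^{sc}_{n+K_2}(x)$, which gives an upper bound, not the lower bound you need. The fix is either to take $K = \min\{K_1,K_2\}$ and use that the sequences are decreasing (so $\widehat{\nu}^{sc}(\widehat{J}^{sc}_{n+K}) \geq \widehat{\nu}^{sc}(\widehat{J}^{sc}_{n+K_2}) \geq C_2\,\widehat{\nu}^{sc}(\widehat{J}^{sc}_n)$), or simply to observe that both Lemma \ref{lemma:Jr-regular} and the adapted \cite[Section 7]{BW10} estimate already give regularity with step $K=1$, so no reconciliation of steps is needed, as in the paper's own proof.
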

\begin{proof}
Adapting Section 7 of \cite{BW10} we can prove that there exists a $c>0$ such that $\widehat{\nu}^{sc}(\widehat{J}_{n+1}^{sc}(x))>c\cdot\widehat{\nu}^{sc}(\widehat{J}_{n}^{sc}(x))$.
Since the measure \( \widehat{\nu}^{rsc} \) is the product \( \nu^r \times \widehat{\nu}^{sc} \), and $J^r_n(x))$ is regular, the desired result follows directly.
\end{proof}

In this section, we established that
a point $\pi (x)$ is a Lebesgue density point of $\pi(A^{rs})$ if and only if $$\lim_{n\to \infty} \widehat{\nu}^{cu} (A^{u}:\widehat{J}^{rsc}_n(x))  =1,$$
where $A^u$ is the $u$-saturate of $A^{rs}$.
To do this, we first establish the equivalence between $\pi (x)$ being a Lebesgue density point of $\pi(A^{rs})$ and 
$$\lim _{n\to \infty} \nu (A^{rs}:B_n(x))  =1.$$
This step changes the setting from $M$ to $M_f$, $\mu$ to $\nu$ and, $\pi(A^{rs})$ to $A^{rs}$. Then using the results from \cite[Section 8]{BW10} we proved the equivalence with $$\lim _{n\to \infty} \nu (A^{rs}:E_n(x))  =1.$$
Using Proposition \ref{Prop:UnstableHolonomyBounds} we get that  \( E_n(x) \), \( F_n(x) \), and \( G_n(x) \) are internested which allows us to get the corresponding equivalence with the sequence of sets $F_n(x)$ and later $$\lim _{n\to \infty} \nu (A^{rs}:G_n(x))  =1.$$
Since $A^{rs}$ is both $rs$-saturated and essentially $u$-saturated, and $A^u$ is the $u$-saturate of $A^{rs}$ this is equivalent to
$$\lim _{n\to \infty} \nu (A^{u}:G_n(x))  =1.$$
The sets of sequence $G_n(x)$ were defined as the union over \( \widehat{J}^{rsc}_n(x) \), in order to prove that $G_n(x)$ is regular we proved that the fibres of $G_n(x)$ over \( \widehat{J}^{rsc}_n(x) \) are uniform. The final equivalence follows from this, and by establishing the regularity of \( \widehat{J}^{rsc}_n(x) \), we completed the proof.
\appendix
\section{K-property}\label{appendix:K-property}
We now establish that $\sig : M_f \to M_f$ has the K-property
with respect to $\nu.$
This is proved by an adaptation of standard arguments for
expanding and contracting foliations.
See
\cite{climenhaga2012measuretheorydynamicaleyes}
or \cite{ponce2019introduction}
for more details.

Fix a constant $0 < \lam < 1$ such that if
$x, y \in M_f$ are distinct points on the same unstable leaf, then
$d_u( \sig \inv(x), \sig \inv(y)) < \lam d_u(x,y).$
Let $B \subset M$ be a small geometric open ball in the base manifold $M$
and define a measurable function $r : M_f \to [0, \infty),$ by
\[
    r(x) = \inf_{n \ge 0} \lam^{-n} \dist( x_{-n}, \del B)
\]
where $x = \{x_n\}_{n \in \bbZ} \in M_f.$

\begin{lema} \label{lemma:almostpositive}
    The function $r$ is positive $\nu$-almost everywhere.
\end{lema}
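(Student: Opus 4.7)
The plan is to apply a Borel--Cantelli style estimate, using that $\nu$ is $\sig$-invariant with $\pi_*\nu = \mu$ and that the boundary of a geometric ball has a thin tubular neighbourhood in the base manifold. First, I would rewrite the event $\{r(x) < \epsilon\}$ in a useful form: by definition,
\[
    r(x) < \epsilon \spacedblarrow \exists\, n \ge 0 : \dist(x_{-n}, \del B) < \epsilon \lam^n,
\]
so if I set $N_\delta(\del B) \coloneqq \{y \in M : \dist(y, \del B) < \delta\}$, then
\[
    \{r < \epsilon\} \;=\; \bigcup_{n \ge 0} \sig^n\bigl(\pi\inv(N_{\epsilon \lam^n}(\del B))\bigr).
\]

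Next, I would estimate the measure of each term in this union. Since $\sig$ preserves $\nu$ and $\pi_*\nu = \mu$,
\[
    \nu\bigl(\sig^n(\pi\inv(N_{\epsilon \lam^n}(\del B)))\bigr) \;=\; \mu(N_{\epsilon \lam^n}(\del B)).
\]
Because $B$ is a small geometric ball, $\del B$ is a smooth codimension-one submanifold of $M$, and the volume of a $\delta$-tubular neighbourhood of such a submanifold is bounded by $C\delta$ for a constant $C$ depending only on $B$ and the Riemannian metric. Summing the geometric series,
\[
    \nu(\{r < \epsilon\}) \;\le\; \sum_{n \ge 0} C \epsilon \lam^n \;=\; \frac{C \epsilon}{1 - \lam}.
\]

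Since $\{r = 0\} \subset \{r < \epsilon\}$ holds for every $\epsilon > 0$, letting $\epsilon \searrow 0$ gives $\nu(\{r = 0\}) = 0$, which is the desired conclusion. The main (very minor) subtlety is the tubular neighbourhood estimate for $\del B$; this is a standard fact in Riemannian geometry given that $B$ is taken to be a small geometric ball, so no real obstacle arises. The argument is essentially the same as the standard proof that $\sum_n \mu(N_{\lam^n}(\del B)) < \infty$ forces typical orbits to stay away from the boundary at an exponential rate.
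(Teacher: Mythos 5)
Your proof is correct and follows essentially the same route as the paper's: both rest on the tubular-neighbourhood estimate $\mu(N_\delta(\del B)) \le C\delta$, the $\sig$-invariance of $\nu$ together with $\pi_*\nu = \mu$, and a convergent geometric series. The only cosmetic difference is that the paper invokes the Borel--Cantelli lemma after observing that $\sum_n \nu(X_n) < \infty$, whereas you phrase the same computation as a direct quantitative bound $\nu(\{r < \epsilon\}) \le C\epsilon/(1-\lam)$ and let $\epsilon \searrow 0$; your version is self-contained and slightly more explicit, but it is not a different argument.
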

\begin{proof}
    For $n \ge 0,$ define $V_n \subof M$ by
    $p \in V_n$ if and only if $\dist(p, \del B) < \lam^n.$
    Since the ball $B$ has a smooth boundary, the volume
    of $V_n$ is proportional to $\lam^n$
    and so the series
    $\sum_{n=0}^\infty \mu(V_n)$
    converges.
    If we write $X_n = \sig^n \pi \inv(V_n),$
    then $\sum_{n=0}^\infty \nu(X_n)$ converges to same limit
    and $x \in X_n$ holds if and only if 
    $\lam^{-n} \dist( x_{-n}, \del B) < 1.$
    The result can then be proved using the Borel--Cantelli lemma.
\end{proof}
Let $U = \pi \inv(B)$ and define a partition $\xi$ of $M_f$ as follows:
If $x \in M_f \sans U,$ then $\xi(x) = M_f \sans U.$
If $x \in U,$ then $\xi(x)$ is the connected component
of $W^u(x) \cap U$ containing $x.$
In the second case, the diameters of the resulting
unstable plaques are uniformly bounded; that is, there is $D > 0$
such that if $x, y \in U$ and $\xi(x) = \xi(y),$ then $d_u(x,y) < D.$
From $\xi,$ define a finer partition
$\displaystyle{ \eta = \bigvee_{n \ge 0} \sig^n \,\xi.}$

\begin{lemma} \label{lemma:leafsubordinate}
    For $\nu$-almost every $x \in M_f,$
    $\eta(x)$ is a subset of the unstable leaf $\Wu(x)$
    and contains an open neighborhood of $x$ inside $\Wu(x).$
\end{lemma}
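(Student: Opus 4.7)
The claim has two parts: (a) $\eta(x) \subset \Wu(x),$ and (b) $\eta(x)$ contains an open neighborhood of $x$ inside $\Wu(x),$ both holding for $\nu$-almost every $x \in M_f.$ Recall from the construction that $y \in \eta(x)$ if and only if $\sig^{-n} y$ and $\sig^{-n} x$ share a $\xi$-atom for every $n \ge 0.$

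For part (a), the first step is to observe that for $\nu$-almost every $x$ there exists some $n \ge 0$ with $\sig^{-n}x \in U.$ This follows from the ergodicity of $\sig$ on $(M_f, \nu),$ which is available here because Theorem \ref{main} gives ergodicity of $f$ and the natural extension of an ergodic endomorphism is ergodic; otherwise the set of points whose backward orbit never meets $U$ would be essentially $\sig$-invariant and yet contained in the complement of $U,$ contradicting ergodicity since $\nu(U) > 0.$ Given such an $n,$ the atom $\xi(\sig^{-n}x)$ is by definition a connected component of $\Wu(\sig^{-n}x) \cap U,$ so any $y \in \eta(x)$ satisfies $\sig^{-n}y \in \Wu(\sig^{-n}x),$ and applying $\sig^n$ yields $y \in \Wu(x).$

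For part (b), I would restrict to the full-measure set on which $r(x) > 0$ (Lemma \ref{lemma:almostpositive}). Fix such an $x$ and any $y \in \Wu(x)$ with intrinsic unstable distance $d_u(x,y) < r(x);$ the plan is to verify the defining condition for $y \in \eta(x)$ at every $n \ge 0.$ The contraction hypothesis yields
\[
    d_u(\sig^{-n}x, \sig^{-n}y) < \lam^n d_u(x,y) < \lam^n r(x) \le \dist(x_{-n}, \del B).
\]
Since $\pi$ acts as an isometry on tangent spaces, the arc inside $\Wu(\sig^{-n}x)$ joining $\sig^{-n}x$ to $\sig^{-n}y$ projects to a curve in $M$ of length less than $\dist(x_{-n}, \del B)$ joining $x_{-n}$ to $y_{-n},$ every point of which lies within $\dist(x_{-n}, \del B)$ of $x_{-n}.$ Positivity of $r(x)$ forbids $x_{-n} \in \del B,$ leaving two cases: if $x_{-n} \in B,$ the projected arc stays in $B,$ so the lifted arc stays in $U$ and $\sig^{-n}y$ lies in the same connected component of $\Wu(\sig^{-n}x) \cap U$ as $\sig^{-n}x;$ if $x_{-n} \notin \bar B,$ then $y_{-n} \notin \bar B$ as well, so $\sig^{-n}y \notin U.$ In either case $\sig^{-n}y$ and $\sig^{-n}x$ share a $\xi$-atom, giving $y \in \eta(x).$

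The main obstacle is the careful interplay between the intrinsic unstable metric $d_u$ on leaves of $\Wu,$ the metric $d$ on $M_f,$ and the Riemannian metric $d_M$ on $M;$ one must show that a short arc inside an unstable leaf projects to an arc in $M$ of no greater length, and that this projected arc, by the choice of $r(x),$ does not cross $\del B.$ A secondary subtlety is the appeal to $\sig$-ergodicity in part (a), but this is not circular because it is inherited from the $f$-ergodicity established in Theorem \ref{main} via the natural extension.
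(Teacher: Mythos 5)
Your proof is correct and follows essentially the same approach as the paper's one-line hint (``$\eta(x)$ contains the local unstable manifold of radius $r(x)$ at $x$''). It is worth noting, however, that you supply a genuinely missing ingredient for the inclusion $\eta(x)\subset\Wu(x)$: positivity of $r(x)$ alone (Lemma \ref{lemma:almostpositive}, which is all the paper cites) does not force the backward orbit of $x$ to enter $U$ --- if it never does, every $\xi(\sigma^{-n}x)$ equals $M_f\setminus U$ and $\eta(x)$ is much larger than $\Wu(x)$. Your appeal to ergodicity of $\sigma$ (inherited from Theorem \ref{main} via the natural extension, and in any case already invoked by the paper in the very next Lemma \ref{lemma:trivialpartition}) is exactly what is needed, and the rest of your argument --- the estimate $d_u(\sigma^{-n}x,\sigma^{-n}y)<\lambda^n r(x)\le\dist(x_{-n},\partial B)$, the fact that $\pi$ is a leafwise isometry so this controls the projected arc, and the case split on whether $x_{-n}\in B$ or $x_{-n}\notin\bar B$ --- is a correct expansion of the paper's assertion.
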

\begin{proof}
    This follows from Lemma \ref{lemma:almostpositive}.
    For each point $x \in M_f,\,$ $\eta(x)$ contains the
    local unstable manifold of radius $r(x)$ at $x.$
\end{proof}
\begin{lemma} \label{lemma:trivialpartition}
    The partition
    $\displaystyle{\bigvee_{n \ge 0} \sig \invn\,\eta}$ is trivial.
\end{lemma}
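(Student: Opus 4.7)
The plan is to show that for $\nu$-almost every $x \in M_f$, the atom of $\bigvee_{n \ge 0}\sig^{-n}\eta$ at $x$ consists of the single point $\{x\}$. By definition of the join, $y$ lies in this atom if and only if $\sig^n y \in \eta(\sig^n x)$ for every $n \ge 0$. I would restrict attention to the $\nu$-full-measure set of $x$ on which both the conclusion of Lemma \ref{lemma:leafsubordinate} holds at $x$ (so that $\eta(x) \subset \Wu(x)$) and $\sig^n x \in U$ for infinitely many $n \ge 0$. The latter is immediate from Poincar\'e recurrence applied to the $\sig$-invariant probability measure $\nu$ and the set $U = \pi\inv(B)$, which has positive $\nu$-measure since $\pi_* \nu = \mu$ and $\mu(B) > 0$.

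Taking $n = 0$ in the atom condition immediately gives $y \in \eta(x) \subset \Wu(x)$, so $y$ lies on the unstable leaf through $x$. Since $\xi$ itself appears as one of the partitions joined to form $\eta = \bigvee_{m \ge 0}\sig^m \xi$, we have $\eta \ge \xi$ and hence $\eta(\sig^n x) \subset \xi(\sig^n x)$ for every $n$. Whenever $\sig^n x \in U$, the atom $\xi(\sig^n x)$ is a connected component of $\Wu(\sig^n x) \cap U$ and therefore has unstable diameter at most the uniform constant $D$ introduced when defining $\xi$. This yields $d_u(\sig^n x, \sig^n y) \le D$ for infinitely many $n \ge 0$.

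On the other hand, iterating the contraction hypothesis $d_u(\sig\inv u, \sig\inv v) < \lam\, d_u(u,v)$ produces $d_u(\sig^n x, \sig^n y) > \lam^{-n} d_u(x,y)$, which diverges with $n$ whenever $d_u(x,y) > 0$. This is incompatible with the bound $\le D$ along an infinite sequence of $n$'s, so $d_u(x,y) = 0$ and $y = x$. Thus the atom reduces to $\{x\}$, which is exactly the claim that the partition is trivial modulo $\nu$. The argument uses only three ingredients — unstable expansion on leaves, the uniform plaque-diameter bound $D$, and Poincar\'e recurrence to $U$ — so there is no real obstacle; the only point demanding attention is that the two almost-sure conditions above hold simultaneously for $\nu$-a.e.\ $x$, which is automatic as both are of full measure.
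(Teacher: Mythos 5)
Your argument is essentially the paper's: reduce to $y$ on the unstable leaf $\Wu(x)$ (via $\eta(x)\subset\Wu(x)$ from Lemma \ref{lemma:leafsubordinate}), observe that $\eta$ refines $\xi$ so the atoms of $\sig^{-n}\eta$ containing $x$ have unstable diameter at most $D$ whenever $\sig^n x\in U$, and combine recurrence to $U$ with the uniform unstable expansion to force $d_u(x,y)=0$. The paper leaves the reduction to $y\in\Wu(x)$ implicit, which you correctly supply.

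One small but real inaccuracy: you justify ``$\sig^n x\in U$ for infinitely many $n\ge0$, for $\nu$-a.e.\ $x$'' by Poincar\'e recurrence. That theorem only gives recurrence for almost every point \emph{of} $U$; it does not by itself prevent an invariant set of positive measure disjoint from $U$ from absorbing part of the space. To get the statement for $\nu$-a.e.\ $x\in M_f$ you need ergodicity of $\sig$, which is exactly what the paper invokes (it is available because the Hopf argument in the body of the paper establishes ergodicity before the K-property appendix is reached). Replace the appeal to Poincar\'e recurrence with the ergodicity of $(\sig,\nu)$ (or the Birkhoff ergodic theorem applied to $1_U$) and the proof is correct.
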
    

\begin{proof}
    By the ergodicity of $\sig,$
    $\nu$-almost every point $x \in M_f$ has an orbit that visits
    $U$ infinitely often. If $y \in \Wu(x)$ is distinct from $x,$
    then there is $n \ge 0$ such that
    $d_u( \sig^n(x), \sig^n(y) ) > D$ and
    $\sig^n(x) \in U.$
    This implies that
    $\xi(\sig^n(x)) \ne \xi(\sig^n(y))$
    and therefore
    $\eta(\sig^n(x)) \ne \eta(\sig^n(y)).$
\end{proof}
\begin{lemma} \label{lemma:coarserthanfoln}
    Modulo a null set,
    the partition
    $\displaystyle{\bigwedge_{n \ge 0} \sig^n\, \eta}$
    is coarser
    than the unstable
    foliation $\Wu.$
\end{lemma}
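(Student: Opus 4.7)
The plan is to show that for $\nu$-a.e.\ $x \in M_f$ the entire unstable leaf $\Wu(x)$ is contained in the atom of $\bigwedge_{n \ge 0} \sig^n \eta$ through $x$. From $\eta = \bigvee_{n \ge 0} \sig^n \xi$ one immediately reads off $\sig^m \eta = \bigvee_{k \ge m} \sig^k \xi$, so the sequence $(\sig^m \eta)_{m \ge 0}$ becomes coarser as $m$ grows and the atom $(\bigwedge_{n \ge 0} \sig^n \eta)(x)$ is the nested union $\bigcup_{m \ge 0} (\sig^m \eta)(x)$.

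Next I would locate an unstable ball of controlled size inside each $(\sig^m \eta)(x) = \sig^m(\eta(\sig^{-m}(x)))$. By Lemma \ref{lemma:leafsubordinate}, $\eta(\sig^{-m}(x))$ lies inside $\Wu(\sig^{-m}(x))$ and contains the local unstable ball of radius $r(\sig^{-m}(x))$ about $\sig^{-m}(x)$. Since $\sig \inv$ contracts unstable distances by a factor of at least $\lam$, for any $w \in \Wu(x)$ with $d_u(w, x) < \lam^{-m} r(\sig^{-m}(x))$ the point $\sig^{-m}(w)$ lies within $r(\sig^{-m}(x))$ of $\sig^{-m}(x)$; hence $(\sig^m \eta)(x)$ contains the unstable ball of radius $\lam^{-m} r(\sig^{-m}(x))$ about $x$. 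A direct calculation using the definition of $r$ gives
\[
    \lam^{-m} r(\sig^{-m}(x)) \ = \ \inf_{k \ge m} \lam^{-k} \dist(x_{-k}, \del B),
\]
which is non-decreasing in $m$ with limit $\liminf_{k \to \infty} \lam^{-k} \dist(x_{-k}, \del B)$.

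The key technical point is then to sharpen Lemma \ref{lemma:almostpositive} to show that this $\liminf$ equals $+\infty$ for $\nu$-a.e.\ $x$. I would replace $V_n$ with $V_n^c = \{ p \in M : \dist(p, \del B) < c \lam^n \}$ for each integer $c \ge 1$. Since $B$ has smooth boundary, each family $(V_n^c)_n$ has summable $\mu$-measure, and $\sig$-invariance of $\nu$ together with $\pi_* \nu = \mu$ imply $\sum_n \nu(\sig^n \pi \inv(V_n^c)) < \infty$. Borel--Cantelli produces a $\nu$-conull set on which $\lam^{-n} \dist(x_{-n}, \del B) \ge c$ for all but finitely many $n$; intersecting over $c \in \bbN$ yields a $\nu$-conull set on which the $\liminf$ above is infinite.

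Combining everything, for $\nu$-a.e.\ $x$ and every $y \in \Wu(x)$ one may choose $m$ with $\lam^{-m} r(\sig^{-m}(x)) > d_u(x,y)$; then $y$ sits in the unstable ball contained in $(\sig^m \eta)(x)$, so $y$ and $x$ lie in a common atom of $\bigwedge_{n \ge 0} \sig^n \eta$. The main obstacle is the Borel--Cantelli sharpening in the third paragraph; the remainder is bookkeeping with the expansion rate of $\sig$ on unstable leaves.
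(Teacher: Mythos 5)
Your proof is correct, and it takes a genuinely different route from the one in the paper. The paper's proof fixes a constant $0 < \beta < 1$, uses the a.e.~positivity of $r$ (Lemma~\ref{lemma:almostpositive}) to extract $\delta > 0$ with $\nu(\{r > \delta\}) > \beta$, invokes $\sigma$-invariance to conclude $\nu(\{r \circ \sigma^{-n} > \delta\}) > \beta$ for every $n$, and then leaves unstated the tail argument that the set of $x$ with $r(\sigma^{-n}(x)) > \delta$ for infinitely many $n$ has measure $\geq \beta$; for such $x$ the atom of $\bigwedge_{n\geq 0}\sigma^n\eta$ contains unstable balls of radius $\lambda^{-n}\delta \to \infty$, and one finishes by letting $\beta \to 1$. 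You instead re-run the Borel--Cantelli computation behind Lemma~\ref{lemma:almostpositive} with the extra parameter $c$, obtaining the stronger conclusion that $\liminf_n \lambda^{-n}\dist(x_{-n},\partial B) = +\infty$ for $\nu$-a.e.~$x$. Combined with your (correct) identity $\lambda^{-m}r(\sigma^{-m}(x)) = \inf_{k\geq m}\lambda^{-k}\dist(x_{-k},\partial B)$, this makes the radii of the unstable balls inside $(\sigma^m\eta)(x)$ increase monotonically to infinity, with no further recurrence or $\lim\sup$-measure argument required. The trade-off: the paper's route is shorter because it reuses Lemma~\ref{lemma:almostpositive} as a black box, but it leans on an implicit measure-theoretic tail argument; yours is a bit more work up front (sharpening the Borel--Cantelli estimate) but then falls through directly. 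One small imprecision: your claim that the atom of $\bigwedge_{n\geq 0}\sigma^n\eta$ \emph{equals} the nested union $\bigcup_m (\sigma^m\eta)(x)$ is not quite exact for measurable partitions; but the containment $\supseteq$, which is all you need, does hold mod~$0$, so the argument is unaffected.
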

\begin{proof}
    Fix a constant $0 < \bt < 1.$
    Since $r : M_f \to [0, \infty)$
    is positive almost everywhere,
    there is $\delta > 0$ such that
    $\nu \big( \{ x \in M_f : r(x) > \delta \} \big) > \bt.$
    By the invariance of the measure under $\sig,$
    we also have for any $n \ge 0$ that
    \[
        \nu \big( \{ x \in M_f : r(\sig \invn(x)) > \delta \} \big) \  > \ \bt.
    \]
    The result follows from this and the uniform expansion of unstable leaves.
\end{proof}
\begin{cor} \label{cor:pinskerunstable}
    The Pinsker partition is coarser than
    the measurable hull of the unstable foliation:
    $\pi(\sig) < \mathcal{H}(\Wu).$
\end{cor}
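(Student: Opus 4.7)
The plan is to invoke Rokhlin's classical characterization of the Pinsker partition. Specifically, if $\sig$ is an invertible measure-preserving transformation and $\eta$ is a measurable partition satisfying $\sig\eta \le \eta$ and $\bigvee_{n \ge 0} \sig\invn\eta = \epsilon$ (the point partition, modulo null sets), then
\[
\pi(\sig) \ = \ \bigwedge_{n \ge 0} \sig^n \eta
\]
modulo null sets (see, for instance, \cite{climenhaga2012measuretheorydynamicaleyes} or \cite{ponce2019introduction} for this form of the Rokhlin--Sinai theorem). The strategy is to verify that the partition $\eta$ constructed above meets these two hypotheses and then combine this identification with Lemma \ref{lemma:coarserthanfoln}.

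The refinement property $\sig\eta \le \eta$ is immediate from the definition $\eta = \bigvee_{n\ge 0}\sig^n \xi$: applying $\sig$ yields $\sig\eta = \bigvee_{n\ge 1}\sig^n\xi$, which is manifestly coarser than $\eta$. The triviality condition $\bigvee_{n\ge 0}\sig\invn\eta = \epsilon$ is exactly the content of Lemma \ref{lemma:trivialpartition}. Note also that $\eta$ is a measurable partition in Rokhlin's sense, since it is obtained from the Borel partition $\xi$ by a countable refinement, and by Lemma \ref{lemma:leafsubordinate} its atoms are (almost surely) contained in single unstable leaves, which makes the subsequent comparison with $\Wu$ meaningful.

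Applying Rokhlin's theorem gives $\pi(\sig) = \bigwedge_{n\ge 0}\sig^n \eta$ modulo null sets. Combining this with Lemma \ref{lemma:coarserthanfoln}, which states that $\bigwedge_{n\ge 0}\sig^n \eta$ is coarser than the unstable foliation modulo null sets, yields the claimed relation $\pi(\sig) \le \mathcal{H}(\Wu)$. The proof is essentially bookkeeping once the three preparatory lemmas are in place, and no step presents a genuine obstacle beyond ensuring the conventions in the cited form of the Rokhlin--Sinai theorem match those of Lemmas \ref{lemma:trivialpartition} and \ref{lemma:coarserthanfoln}.
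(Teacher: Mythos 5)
Your proof follows the same approach the paper takes (the paper's own proof is just a citation to the three preceding lemmas plus classical entropy theory), and the bookkeeping chain
\[
\pi(\sig) \ \le \ \bigwedge_{n \ge 0} \sig^n \eta \ \le \ \mathcal{H}(\Wu)
\]
is correct and yields the corollary. One point to be careful about is the claimed \emph{equality} $\pi(\sig) = \bigwedge_{n\ge 0}\sig^n\eta$. As a blanket statement of "Rokhlin's characterization" for an arbitrary measurable partition $\eta$ with $\sig\eta \le \eta$ and $\bigvee_{n\ge 0}\sig\invn\eta = \epsilon$, this is too strong: the hypotheses are satisfied trivially by $\eta = \epsilon$, whose tail is $\epsilon$, not the Pinsker partition; more generally, the tail of an exhaustive partition can be strictly finer than the Pinsker (e.g.\ take a product of two Bernoulli factors and let $\eta$ be the past of the first factor joined with the point partition of the second). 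What the Rokhlin--Sinai theory gives in this generality is the one-sided containment $\pi(\sig) \le \bigwedge_{n\ge 0}\sig^n\eta$, which is exactly what the corollary needs, so your argument survives once the equality is weakened to this inequality. (Equality does hold for suitable "perfect'' or finite-conditional-entropy partitions, or in the Ledrappier--Strelcyn/Pesin-theoretic setting of partitions subordinate to the unstable foliation, but you should not present it as automatic.)

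One further small accounting remark: Lemma \ref{lemma:leafsubordinate} is not merely making the comparison with $\Wu$ "meaningful.'' The proof of Lemma \ref{lemma:trivialpartition} only separates points lying on the \emph{same} unstable leaf; to conclude that $\bigvee_{n\ge 0}\sig\invn\eta$ is the point partition (mod $\nu$), one also needs Lemma \ref{lemma:leafsubordinate} to guarantee that atoms of $\eta$ lie inside single leaves of $\Wu$, so that points on \emph{different} leaves are already separated by $\eta$ itself. With that understanding, all three preparatory lemmas are being used exactly as the paper intends.
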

\begin{proof}
    This follows from the last three lemmas and classic results on entropy.
    See, for instance, \cite[Section 2.6.1]{ponce2019introduction} or \cite[Theorem 12.1]{Rokhlin_1967}.
\end{proof}
A similar proof shows that $\pi(\sig) < \mathcal{H}(\Ws).$

Accessibility classes in $M_f$ are defined
using the fiber or ``$r$'' direction as well as the $s$ and $u$
directions.
Let $\rho$ be the partition given by the projection $\pi : M_f \to M.$
That is, $\rho(x) = \rho(y)$ if and only if $\pi(x) = \pi(y).$
It is easy to see that $\sig \rho$ is a refinement of $\rho$
and that
$\bigvee_{n \ge 0} \sig^n \rho$ is trivial.
Let $W^r$ denote the partition
$\bigwedge_{n \ge 0} \sig \invn \rho$.
It then follows that the Pinsker partition $\pi(\sig)$ is coarser than
$\Hcal(W^r)$.
All together, we have that
\[
    \pi(\sig) < \Hcal(W^r) \wedge \Hcal(\Ws) \wedge \Hcal(\Wu).
\]
Hence, if $\sig$ is essentially accessible
then the Pinsker partition contains a set of full measure
and this shows that $\sig$ satisfies
the Kolmogorov or K-property.

\section{Technical Details and Proofs}\label{Appendix proofs}
    This appendix provides detailed proofs for those that differ significantly from the originals in \cite{BW10}, as well as detailed versions of the proofs previously given a short explanation.
    
    \begin{proof}[Proof of Proposition \ref{Prop:r-measure}]
    Define the basic sets \(\mathcal{E} = \{\sigma^n(\mathcal{W}^r\loc(x)) : n \in \mathbb{Z}, x \in M_f\}\), and let \(\mathcal{A}^r\) be the algebra they generate. Define the function \(\nu^r_0 : \mathcal{A}^r \to [0, \infty]\) by
    \[
    \nu^r_0(A) = 
    \begin{cases} 
    0, & \text{if } A = \emptyset, \\ 
    \sum_{i=1}^n \deg(f)^{-m_i}, & \text{if } A = \bigsqcup_{i=1}^n \sigma^{m_i}(\mathcal{W}^r\loc(x^i)), \\ 
    \infty, & \text{otherwise}.
    \end{cases}
    \]
    
    We now need to show that the function \(\nu^r_0\) is countably additive.
    
    Define $X$ as the topological space that is the exact same set as $M$, but with the discrete topology.
    Define $X_f\subset X^\Z$ as the space of orbits with the subspace topology relative to $X^\Z$. 
    In other words, $X_f$ is the exact same set as $M_f$, but with a different topology, in particular, every $\mathcal{W}^r\loc(x)=\pi^{-1}(x_0)$ is closed and open in $X_f$ and is homeomorphic to a Cantor set. The shift $\sigma\colon X_f\to X_f$ is still a homeomorphism.

    Basic sets are both open and compact.
    Thus, any countable disjoint union of basic sets must be finite.
    Since \(\nu^r_0\) is finitely additive on basic sets, it extends to countable additivity.
    
    By Carathéodory's Extension Theorem \cite[Theorem 1.14]{folland2013real}, \(\nu^r_0\) extends to \(\nu^r\).
    In order to obtain uniqueness, we restrict the premeasure $\nu^r_0$ to $\mathcal{W}^r(x)$. Since $\mathcal{W}^r(x)= \cup_{n=1}^\infty \sigma^{-n}(\mathcal{W}^r\loc(\sigma^n(x)))  $ and
    \[
    \nu^r_0(\sigma^{-n}(\mathcal{W}^r\loc(\sigma^n(x))))=\deg(f)^{n}<\infty ,
    \]
    the restriction of $\nu^r_0$ to $\mathcal{W}^r\loc(x)$ is $\sigma$-finite. 
    Therefore, 
    restricted to each $\mathcal{W}^r(x)$, the extension of $\nu^r_0$ to $\nu^r$ is unique.
\end{proof}

\begin{proof}[Proof of Proposition \ref{Prop:UnstableHolonomyBounds}]
As noted at the beginning of the section, it suffices to prove the inclusion (\ref{eq:HolonomyBound}). 
For $q\in \widehat{B}^c_n(x)$, let $q'\coloneqq\widehat{h}^u(q)$. Then $q' \in \widehat{B}^c_{n-k_1}(x')$ by Lemma \ref{Lemma31}. Hence $q$ and $q'$ are as in the hypothesis of Lemma \ref{Lemma32} we can apply that lemma to obtain
$$h^u(\widehat{J}^{rcs}_{n}(x))\subset\bigcup_{z\in Q} \widehat{J}^{rs}_{n-k_2}(z),\quad \text{where}\quad Q=\bigcup_{q'\in \widehat{B}^c_{n-k_1}(x')}\widehat{B}^c_{n-k_2}(q').$$
It therefore suffices to find $k\geq k_2$, such that $Q\subset \widehat{B}^c_{n-k}(x')$.
\end{proof}
\begin{proof}[Proof of Lemma \ref{Lemma32}]
    Let $z'$ be such that $\sigma^{-n}z'$ is the unique point in the intersection $\widehat{\mathcal{W}}^c(\sigma^{-n}q') \cap \widehat{\mathcal{W}}^{rs}(\sigma^{-n}y')$. 
    As stated in the proof found in \cite[Section 6]{BW10}, it suffices to show that $d(\sigma^{-n}y',\sigma^{-n}z')=O(\tau_n)$ and $d(z',q')=O(\delta_n)$.
    
    By the definition of \( \widehat{J}^{rs}_n(q) \), there exists $w$ such that $\sigma^{-n}w\in \widehat{\mathcal{W}}^s(\sigma^{-n}q,\tau_n)$ and $\sigma^{-n}y\in {\mathcal{W}}^r(\sigma^{-n}w,\tau_n)$. Thus, we have \( d(\sigma^{-n}y, \sigma^{-n}q) \leq 2 \tau_n \).
    By Proposition \ref{prop:fakefoln}, part \ref{Prop_iii: Exponential growth bounds at local scales} subpart \ref{Prop_iii_b} and since $d(q,q')=O(1)$ we have $d(\sigma^{-n}q,\sigma^{-n}q')=O({\lambda^u_{n}})$. Since $d(y,y')=O(1)$ and $y'$ is the image of $y$ under $\mathcal{W}^u$ holonomy, then $d(\sigma^{-n}y,\sigma^{-n}y')=O(\lambda^u_{n})$.\par
    Recall that from the definition of $\tau$, we have that $\lambda^u<\tau$ and the triangle inequality we have that
    $$ d(\sigma^{-n}q' ,\sigma^{-n}y'  ) \leq d(\sigma^{-n}q' ,\sigma^{-n}q  )+ d(\sigma^{-n}q ,\sigma^{-n}y  )+d(\sigma^{-n}y,\sigma^{-n}y')= O(\tau_n). $$
    From the definition of the distance $d$ we have that $ d_M(\pi\circ \sigma^{-n}q' ,\pi\circ\sigma^{-n}y'  ) = O(\tau_n). $ 
    Uniform transversality of the foliations $\widehat{\mathcal{W}}^s$ and $\widehat{\mathcal{W}}^{c}$ implies that both
    $ d_M( {\pi\circ\sigma^{-n}} y',{\pi\circ\sigma^{-n}} z'  )$ and $ d_M(\pi\circ\sigma^{-n}q',\pi\circ\sigma^{-n}z'  )$ are $ O(\tau_n). $ 
    Since $\sigma^{-n}q'$ and $\sigma^{-n}z'$ are in the same $scu$-sheet $ d_M(\pi\circ\sigma^{-n}q',\pi\circ\sigma^{-n}z'  )=d(\sigma^{-n}q',\sigma^{-n}z')=O(\tau_n)$. 
    
    Let $w'$ be such that $\sigma^{-n}y' \in \mathcal{W}^r (\sigma^{-n}w')$ and $\sigma^{-n}w'\in \widehat{\mathcal{W}}^{s}(\sigma^{-n}z') $. The points $\sigma^{-n}w'$ and $\sigma^{-n}y'$ are the images of the points $\sigma^{-n}w$ and $\sigma^{-n}y$ through the $scu$-holonomy that sends $\mathcal{W}^r\loc(w)$ to $\mathcal{W}^r\loc(w')$. From Lemma \ref{lemma:fibermetricinvariance}, it follows that
    \begin{equation}\label{eq:w',y'}
     d(\sigma^{-n}w',\sigma^{-n}y')<\tau_n.   
    \end{equation}
    
    Since $\pi\circ\sigma^{-n}y'=\pi\circ\sigma^{-n}w'$, then 
    \begin{multline}
    \label{eq:w',z'}
    d( {\sigma^{-n}} w',{\sigma^{-n}} z' )= d_M( {\pi\circ\sigma^{-n}} w',{\pi\circ\sigma^{-n}} z'  )= \\
    d_M( {\pi\circ\sigma^{-n}} y',{\pi\circ\sigma^{-n}} z'  )=O(\tau_n).     
    \end{multline}
    From Equations (\ref{eq:w',y'}) and (\ref{eq:w',z'}) we get $d(\sigma^{-n}y',\sigma^{-n}z')=O(\tau_n)$.\par
    Finally, since $d(\sigma^{-n}q',\sigma^{-n}z')=O(\tau_n)$, and $\sigma^{-n}q'$ and $\sigma^{-n}z'$ lie on the same fake center leaf at distance $O(\tau_n)$, Proposition \ref{prop:fakefoln}, part \ref{Prop_iii: Exponential growth bounds at local scales} subpart \ref{Prop_iii_b} now implies $d(q',z') =O((\widehat{\lambda}^c_n)^{-1} \tau_n)$. By definition of $\tau$ and $\delta$ we have that $\tau<\delta\cdot\hat \lambda^c$. Therefore, $(\widehat{\lambda}^c_n)^{-1} \tau_n<\delta_n$, and $d(q',z')=O(\delta_n)$ as desired.
\end{proof}

\begin{proof}[Proof of Lemma \ref{Lemma E F G internested}]
Both $E_n(x)$ and $F_n(x)$ can be expressed as unions over $\mathcal{W}^u(x,\delta^n)$.
Proposition \ref{prop:fakefoln}, part \ref{Prop_v: Uniqueness}, implies that the set $E_n(x)$ is the union of all the $\widehat{J}^{rsc}_n(q)$ for $q$ in $\mathcal{W}^u(x,\delta^n)$.
The set $F_n(x)$ is the union over all $q$ in $\mathcal{W}^u(x,\delta^n)$ of the image of $\widehat{J}^{rsc}_n(x)$ under the unstable holonomy from $\widehat{\mathcal{W}}^{rcs}\loc(x)$ to $\widehat{\mathcal{W}}^{rsc}(q)$.
It follows from Proposition \ref{Prop:UnstableHolonomyBounds} that the sequences $E_n(x)$ and $F_n(x)$ are internested.\par
To see that $F_n(x)$ and $G_n(x)$ are internested, it will suffice to prove that $k_0$ can be chosen so that for every $q\in \widehat{J}^{rsc}_n(x)$ we have $$\mathcal{W}^u(q,\delta_{n+k_0})\subset h(\mathcal{W}^u(x,\delta_{n}) )\subset\mathcal{W}^u(q,\delta_{n-k_0})$$
where $h$ is the $rsc$-holonomy from $\mathcal{W}^{u}\loc(x)$ to $\mathcal{W}^{u}(q)$.
Since $q\in \widehat{J}^{rsc}_n(x)$, we have $d(x,q)=O(\delta_n)$.
Let $x'$ be in the boundary of $\mathcal{W}^u(x,\delta_{n})$, then $d(x,x')=\delta_n$. Let $q'\coloneqq h(x')$. From Proposition \ref{Prop:UnstableHolonomyBounds} we get that $d(p',q')=O(\delta_n)$. Thus, for $k_0$ large enough we have that $\delta_{n+k_0}\leq d(q,q')\leq \delta_{n-k_0} $. The result follows from this.
\end{proof}
\bibliographystyle{alpha}
\bibliography{references}

\end{document}